\newtheorem{theorem}{Theorem}[section]
\newtheorem{lem}[theorem]{Lemma}
\newtheorem{cor}[theorem]{Corollary}
\newtheorem{prop}[theorem]{Proposition}
\newtheorem{conj}[theorem]{Conjecture}
\theoremstyle{definition}
\newtheorem{defi}[theorem]{Definition}
\newtheorem{example}[theorem]{Example}
\theoremstyle{remark}
\newtheorem{rem}[theorem]{Remark}
\newcommand{\BC}{\mathbb{C}}            
\newcommand{\BZ}{\mathbb{Z}}             
\newcommand{\BN}{\mathbb{N}}            
\newcommand{\dstirling}[2]{\genfrac{[}{]}{0pt}{0}{#1}{#2}}   
\newcommand{\Gaff}{\hat{\mathfrak{g}}}        
\newcommand{\Glie}{\mathfrak{g}}             
\newcommand{\Osh}{\mathcal{O}^{\mathrm{sh}}}
\newcommand{\Hlie}{\mathfrak{h}}          
\newcommand{\qaf}{U_q(\hat{\mathfrak{g}})}    
\newcommand{\Borel}{U_q(\mathfrak{b})}   
\newcommand{\CU}{\mathcal{U}}            
\newcommand{\BT}{\mathbf{T}}    
\newcommand{\BA}{\mathbf{A}}    
\newcommand{\SA}{\mathscr{A}}
\newcommand{\CR}{\mathcal{R}}    
\newcommand{\barR}{\overline{\mathcal{R}}}    
\newcommand{\BQ}{\mathbf{Q}}                
\newcommand{\BP}{\mathbf{P}}            
\newcommand{\wt}{\mathrm{wt}}         
\newcommand{\Bf}{\mathbf{f}}  
\newcommand{\Bn}{\mathbf{n}}
\newcommand{\Bm}{\mathbf{m}}
\newcommand{\Bp}{\mathbf{a}}
\title{Jordan-H\"older property for shifted quantum affine algebras}
\author{David Hernandez}
\address{DH: Université Paris Cité and Sorbonne Université, CNRS, IMJ-PRG, F-75006, Paris, France}
\email{david.hernandez@imj-prg.fr}
\author{Huafeng Zhang}
\address{HZ: CNRS, UMR 8524-Laboratoire Paul Painlev\'e, Univ. Lille, F-59000 Lille, France}
\email{huafeng.zhang@univ-lille.fr}
\begin{document}

\begin{abstract} We prove that finite length representations of shifted quantum affine algebras in category $\mathcal{O}^{\mathrm{sh}}$ are stable by fusion product. 
This implies that in the topological Grothendieck ring $K_0(\mathcal{O}^{\mathrm{sh}})$ the Grothendieck group of finite length representations forms a non-topological subring. We also conjecture this subring is isomorphic to the cluster algebra discovered in \cite{GHL}.
In the course of our proofs, we establish that any simple representation in category $\mathcal{O}^{\mathrm{sh}}$ descends to a truncation, for certain truncation parameters as conjectured in \cite{H} in terms of Langlands dual $q$-characters. \end{abstract}

\maketitle
\tableofcontents

\section{Introduction}

Let $\Glie$ be a simple complex finite dimensional Lie algebra, and let $U_q(\Gaff)$ be the associated non-twisted quantum affine algebra for $q$ a nonzero complex number which is not a root of unity. 

In their study of quantized $K$-theoretic Coulomb branches of $3d$ $\mathcal{N} = 4$ SUSY quiver gauge theories, Finkelberg and Tsymbaliuk \cite{FT} introduced a family of associative algebras $\CU_{\mu}(\Gaff)$ that are called {\it shifted quantum affine algebras}. They depend on a shift parameter $\mu$ which is an integral coweight of $\mathfrak{g}$ and also implicitly on the deformation parameter $q$. When $\mu = 0$, the algebra $\CU_0(\Gaff)$ is a central extension of $U_q(\Gaff)$ and it has essentially the same representation theory. When $\mu \not = 0$ the representation theory becomes very different. In \cite{H}, the first author introduced a category $\mathcal{O}_\mu$ containing infinite-dimensional representations, and showed that the topological Grothendieck group of the direct sum $\Osh$ of categories $\mathcal{O}_\mu$ for all integral coweights has a natural ring structure coming from an operation on simple representations called fusion product. 

In the present paper we establish the following Jordan--H\"older property for shifted quantum affine algebras : all fusion products of simple representations in the category $\Osh$ are of finite length. 

Our proof is partly based on the study of remarkable quotients of shifted quantum affine algebras, the truncated shifted quantum affine algebras (or truncations), also introduced in \cite{FT} to realize quantized $K$-theoretical Coulomb branches. It was proved in \cite{H} that each truncation has a finite number of simple representations up to isomorphism in the category $\Osh$; this is the Jordan-H\"older property for truncations. One crucial point in our present proof is to establish that any simple representation in category $\Osh$ descends to a truncation. This result, which has some interest on his own, is obtained in this paper by using certain polynomiality results related to the universal R-matrix of the quantum affine algebra $\qaf$ as a quasi-triangular Hopf algebra. In more details, truncations are defined in terms of certain power series with coefficients in shifted quantum affine algebras. We interpret these series as limits of transfer matrices obtained from the universal R-matrix.

Our main results, the Jordan--H\"older property and the truncation property, were established previously \cite{HZ} in a different context for the so-called shifted Yangians. These algebras arise much earlier from quantized Coulomb branches \cite{BFN} and parallelly from the study of affine Grassmannians slices \cite{KWWY}.  The proof in \cite{HZ} relied in an important way on an extension of the Drinfeld--Jimbo coproduct of the ordinary Yangian, the shifted coproduct constructed in \cite{coproduct}. Such a shifted coproduct is conjectural for shifted quantum affine algebras beyond type A. For that reason we had to write a different proof. Our new approach has the advantage that the truncation parameters can be computed explicitly.

We also apply our methods to a natural subcategory of $\Osh$, the category $\widehat{\mathcal{O}}$ of representations of the quantum affine algebra. This is a monoidal category whose monoidal structure is defined by the ordinary Drinfeld--Jimbo coproduct instead of the fusion product. We refine the Jordan--H\"older property for category $\widehat{\mathcal{O}}$: all tensor products of simple representations in this category has finite length. This result seems to be unknown for Yangians. A key step in its proof is a polynomiality property for the Drinfeld--Jimbo coproduct observed in \cite{Z}.  

We obtain several applications of our result.

First, we show that the topological Grothendieck ring $K_0(\Osh)$ admits an ordinary subring $K_0(\mathcal{O}^{\rm sh,f})$ formed of isomorphism classes of finite length representations. We think this is an interesting ring that deserves further study.

Secondly, in \cite{GHL} a completed cluster algebra structure was constructed on a topological Grothendieck ring $K_0(\Osh_{\BZ})$ of a certain subcategory $\Osh_{\BZ}$. It is conjectured therein that the cluster variables correspond to simple classes. Using our result, we can partly reformulate this conjecture : we conjecture the cluster algebra of \cite{GHL} is isomorphic to the subring $K_0(\mathcal{O}_{\BZ}^{\rm sh,f})$ of isomorphism classes of finite length representations.

Eventually, a conjectural classification of simple modules of truncations was formulated in \cite{H} in terms of Langlands dual $q$-characters (obtained as limits of elements in the deformed $\mathcal{W}$-algebra). We prove any simple module in $\Osh$ descends actually to a truncation, for certain truncation parameters as predicted by the conjecture.

This paper is organized as follows.

In Section \ref{sec: Yangian}, we give reminders on shifted quantum affine algebras and their representations.

In Section \ref{sec: FT}, we give reminders on truncated shifted quantum affine algebras, in their different versions, the original adjoint truncations of \cite{FT} and some slight modifications that we introduce (the simply connected truncations and the intermediate truncations), underlying their different advantages. We also recall from \cite{H} the Jordan--H\"older property for truncations (Theorem \ref{thm: JH}).

In Section \ref{TruncRmat}, we recall a result of \cite{Z} on the polynomiality of $R$-matrices (Theorem \ref{thm: poly R Borel}) and then we establish polynomiality of $A$-operators (Theorem \ref{thm: polynomiality A series}) which are used to define truncations.

In Section \ref{invdom}, we establish one of our main results: any irreducible module in the category $\mathcal{O}_{\mu}$ factorizes through a quotient $\CU_{\mu}^{\Bp}(\Gaff)$ of the shifted quantum affine algebra (Theorem  \ref{thm: truncation shifted}). A similar statement is established for a tensor product of several irreducible modules in category $\widehat{\mathcal{O}}$. As an intermediate result, we obtain in  Proposition \ref{prop: key} a sufficient condition for a module in category $\Osh$ to be of finite length. 

In Section \ref{appli}, we obtain several applications from our main result : the existence of subrings of finite length representations (Corollaries \ref{subring}, \ref{fls}, \ref{subring cluster}),  their relation to the cluster algebras of \cite{GHL} (Conjecture \ref{isomclus}) and then the proof of the descend to truncation in accordance with \cite[Conjecture 12.2]{H}(Corollary \ref{clastrunc}).

\medskip

\noindent {\bf Acknowledgments:} The first author would like to thank C. Geiss and B. Leclerc for useful discussions. The second author acknowledges support from the Labex CEMPI (ANR-11-LABX-0007-01).

\section{Generalities on shifted quantum affine algebras} \label{sec: Yangian}
In this section we review basic properties of quantum affine algebras, their shifted versions and their representation theory. The ground field is $\BC$, and $\BN := \BZ_{\geq 0}$.

Fix $\Glie$ to be a finite-dimensional simple Lie algebra. Let $\Hlie$ be a Cartan subalgebra of $\Glie$, and $I := \{1,2,\cdots,r\}$ be the set of Dynkin nodes. The dual Cartan subalgebra $\Hlie^*$ admits a basis of {\it simple roots} $(\alpha_i)_{i \in I}$ and a non-degenerate invariant symmetric bilinear form $(,): \Hlie^* \times \Hlie^* \longrightarrow \BC$ normalized in such a way that the $d_i := \frac{(\alpha_i,\alpha_i)}{2}$ for $i \in I$ are coprime positive integers in $\{1,2,3\}$. We have the Cartan matrix $(c_{ij})_{i,j\in I}$ and the symmetrized Cartan matrix $(b_{ij})_{i,j\in I}$ whose entries are integers defined by
$$c_{ij} := \frac{2(\alpha_i,\alpha_j)}{(\alpha_i,\alpha_i)},\quad b_{ij} := (\alpha_i,\alpha_j) = d_i c_{ij}. $$ 
Let $r^{\vee}  := \max_{i\in I}(d_i)$ denote the lacing number, $h^{\vee}$ the dual Coxeter number, and $i \mapsto \overline{i}$ the involution on $I$ induced by the longest Weyl group element, all with respect to the Lie algebra $\Glie$.
Let $(\varpi_i^{\vee})_{i \in I}$ be the basis of $\Hlie$ dual to the basis $(\alpha_i)_{i\in I}$ of $\Hlie^*$ with respect to the natural pairing $\langle, \rangle: \Hlie \times \Hlie^* \longrightarrow \BC$; the $\varpi_i^{\vee}$ are called {\it fundamental coweights}.
We shall need the coweight lattice in $\Hlie$, the root lattice in $\Hlie^*$ and its cones 
$$\BP^{\vee} := \bigoplus_{i\in I} \BZ \varpi_i^{\vee}, \quad  \BQ := \bigoplus_{i\in I} \BZ \alpha_i,\quad \BQ_+ := \sum_{i\in I} \BN \alpha_i,\quad \BQ_- := -\BQ_+.  $$
 By a coweight we mean an element $\mu$ of the coweight lattice $\BP^{\vee}$, namely, a $\BZ$-linear combination $\sum_{i\in I} m_i \varpi_i^{\vee}$ of the fundamental coweights. Notice that $m_i = \langle \mu, \alpha_i \rangle$. We say that $\mu$ is {\it dominant} if all the $m_i$ are non-negative. We say that $\mu$ is {\it antidominant} if $-\mu$ is dominant.

Fix $q \in \BC^{\times}$ which is not a root of unity. For $t \in \BZ$ and $n \in \BN$ set
$$  [t]_q := \frac{q^t-q^{-t}}{q-q^{-1}}, \quad \dstirling{t}{n}_q := \prod_{m=1}^n [t-m+1]_q.  $$
Set $q_i := q^{d_i}$ for $i \in I$.
 Define three $I\times I$-matrices $B(q), C(q)$ and $D(q)$ by the formulas 
  $$ B_{ij}(q) := [b_{ij}]_q,\quad C_{ij}(q) := [c_{ij}]_{q_i}, \quad D_{ij}(q) := \delta_{ij} [d_i]_q. $$
 These are invertible matrices and moreover $B(q) = D(q) C(q)$.
 
\subsection{The quantum affine algebra}
Let $\theta = \sum_{i=1}^r a_i \alpha_i \in \BQ_+$ be the highest positive root of $\mathfrak{g}$.
We enlarge the Cartan matrix $(c_{ij})_{1\leq i,j \leq r}$ to an affine-type Cartan matrix $(c_{ij})_{0\leq i,j\leq r}$ as follows:
$$c_{00} := 2, \quad c_{i0} = -\frac{2(\alpha_i, \theta)}{(\alpha_i, \alpha_i)},\quad  c_{0i} := -\frac{2(\theta, \alpha_i)}{(\theta, \theta)}\quad \mathrm{for}\ 1 \leq i \leq r. $$
 Define $q_0 := q^{d_0}$ where $d_0 := \frac{(\theta,\theta)}{2}$.
 

The {\it quantum affine algebra} $\qaf$ in the Drinfeld--Jimbo realization \footnote{To be precise, it is the Drinfeld--Jimbo quantum group attached to the affine Cartan matrix $(c_{ij})_{0\leq i,j\leq r}$ of zero central charge and without derivation operator.} is the associative algebra generated by $e_i, f_i, k_i^{\pm 1}$ for $0 \leq i \leq r$ and with relations for $0 \leq i, j \leq r$:
\begin{gather*}
 k_ik_i^{-1} = 1 = k_i^{-1}k_i,\quad k_i k_j = k_j k_i, \quad k_0 k_1^{a_1} k_2^{a_2} \cdots k_r^{a_r} = 1, \\
k_i e_j = q_i^{c_{ij}} e_j k_i,\quad k_i f_j = q_i^{-c_{ij}} f_j k_i, \quad [e_i, f_j] = \delta_{ij} \frac{k_i - k_i^{-1}}{q_i-q_i^{-1}}, \\
\sum_{s=0}^{1-c_{ij}} (-1)^s \dstirling{1-c_{ij}}{s}_{q_i} x_i^{1-c_{ij}-s} x_j x_i^s = 0  \quad \textrm{if $i \neq j$ and $x \in \{e, f\}$}.
\end{gather*}
The algebra $\qaf$ has a Hopf algebra structure with coproduct given by:
\begin{equation*} 
\Delta(e_i) = e_i \otimes 1 + k_i \otimes e_i,\quad \Delta(f_i) = 1 \otimes f_i + f_i \otimes k_i^{-1},\quad \Delta(k_i) = k_i \otimes k_i.
\end{equation*}
The {\it Borel subalgebra}, denoted by $\Borel$, is the subalgebra generated by $e_i, k_i^{\pm 1}$ for $0 \leq i \leq r$. It is a Hopf subalgebra of $\qaf$.

The quantum affine algebra $\qaf$ has a second presentation, called Drinfeld new realization \cite{Dr, Beck, Damiani, Damiani2}. Its generators are $x_{i,m}^{\pm}, \phi_{i,m}^{\pm}$ for $(i,m) \in I \times \BZ$. Its defining relations are as follows for $(i,j,m,n) \in I^2 \times \BZ^2$ and $\varepsilon \in \{+, -\}$:
\begin{gather} 
\phi_{i,m}^+ = 0 \ \mathrm{if}\ m < 0,\quad \phi_{i,m}^- = 0 \ \mathrm{if}\ m > 0, \quad \phi_{i,0}^+ \phi_{i,0}^- = 1,  \label{Drinfeld rel: initial}  \\
[\phi_{i,m}^{\pm}, \phi_{j,n}^{\varepsilon}] = 0, \quad [x_{i,m}^+, x_{j,n}^-] = \delta_{ij} \frac{\phi_{i,m+n}^+-\phi_{i,m+n}^-}{q_i-q_i^{-1}}, \label{Drinfel rel: Cartan}  \\
\phi_{i,m+1}^{\varepsilon} x_{j,n}^{\pm} -q^{\pm b_{ij}} \phi_{i,m}^{\varepsilon} x_{j,n+1}^{\pm} = q^{\pm b_{ij}} x_{j,n}^{\pm} \phi_{i,m+1}^{\varepsilon} - x_{j,n+1}^{\pm} \phi_{i,m}^{\varepsilon},  \label{Drinfeld rel: Drinfeld-Cartan} \\
x_{i,m+1}^{\pm} x_{j,n}^{\pm} -q^{\pm b_{ij}} x_{i,m}^{\pm} x_{j,n+1}^{\pm} = q^{\pm b_{ij}} x_{j,n}^{\pm} x_{i,m+1}^{\pm} - x_{j,n+1}^{\pm} x_{i,m}^{\pm},  \label{Drinfeld rel: Drinfeld} \\
\sum_{s=0}^{1-c_{ij}} (-1)^s \dstirling{1-c_{ij}}{s}_{q_i}(x_{i,0}^{\pm})^{1-c_{ij}-s} x_{j,0}^{\pm} (x_{i,0}^{\pm})^s = 0 \quad \mathrm{if}\ i \neq j. \label{Drinfeld rel: Serre}
\end{gather}

Let us define three subalgebras of $\qaf$ by generating subsets. The first $U_q^+(\Gaff)$ is generated by all the  $x_{i,m}^+$, the second $U_q^-(\Gaff)$ generated by the $x_{i,m}^-$, and the third $U_q^0(\Gaff)$ by the $\phi_{i,m}^{\pm}$. The algebra $U_q^0(\Gaff)$ is commonly referred to as Drinfeld--Cartan subalgebra. We have a triangular decomposition, that is a linear isomorphism
$$U_q(\Gaff)\simeq U_q^-(\Gaff)\otimes U_q^0(\Gaff)\otimes U_q^+(\Gaff).$$
From Beck's isomorphism \cite{Beck}, we have for $i \in I$ and $m \in \BN$: 
\begin{gather*}
k_i = \phi_{i,0}^+ \in U_q^0(\Gaff),\quad e_i = x_{i,0}^+ \in U_q^+(\Gaff), \quad f_i = x_{i,0}^- \in U_q^-(\Gaff), \\
 k_0^{-1} e_0 \in U_q^-(\Gaff),  \quad x_{i,m}^+,\ x_{i,m+1}^-,\ \phi_{i,m}^+ \in \Borel.
\end{gather*}
With respect to the conjugate action of the $k_i = \phi_{i,0}^+$, the Hopf algebra $\qaf$ is graded by the root lattice $\BQ$: an element $x \in \qaf$ is of weight $\beta \in \BQ$, and we write $\wt(x) = \beta$, if $k_i x k_i^{-1} = q^{(\alpha_i,\beta)} x$ for $1\leq i \leq r$. In terms of Drinfeld generators, we have
\begin{gather*}
 \wt(x_{i,m}^{\pm}) = \pm\alpha_i, \quad  \wt(\phi_{i,m}^{\pm}) = 0.
\end{gather*}
Let $\qaf_{\beta}$ denote the subspace of elements in $\qaf$ of weight $\beta$. The weight grading descends to the Borel subalgebra and the above three subalgebras.

\subsection{Shifted quantum affine algebras} 
For $\mu = \sum_{i\in I} m_i \varpi_i^{\vee}$ a coweight, the {\it shifted quantum affine algebra} $\CU_{\mu}(\Gaff)$ is the associative algebra generated by $ x_{i,m}^{\pm}, \phi_{i,m}^{\pm}$ for $(i, m) \in I \times \BZ$
subject to Eqs.\eqref{Drinfel rel: Cartan}--\eqref{Drinfeld rel: Serre} and the following relations \cite{FT}:
\begin{equation}  \label{Drinfeld rel: shift}
\phi_{i,m}^+ = 0 \ \mathrm{if}\ m < 0,\quad \phi_{i,m}^- = 0 \ \mathrm{if}\ m > m_i, \quad \phi_{i,0}^+ \phi_{i,m_i}^- \textrm{ is central and invertible}.
\end{equation}
For $i \in I$ and $y \in \{x^{\pm}, \phi^{\pm}\}$ let us define the generating series 
$$y_i(z) := \sum_{m\in \BZ} y_{i,m} z^m \in \CU_{\mu}(\Gaff)[[z,z^{-1}]].$$ 
Then $\phi_i^+(z)$ is a power series in $z$ of leading term $\phi_{i,0}^+$, while $\phi_i^-(z)$ is a Laurent series in $z^{-1}$ of leading term $\phi_{i,m_i}^- z^{m_i}$.

The quantum affine algebra $\qaf$ is the quotient of the zero-shifted quantum affine algebra $\CU_0(\Gaff)$ by the ideal generated by the central elements $\phi_{i,0}^+ \phi_{i,0}^- - 1$ for $i \in I$. 

 The shifted quantum affine algebra is also graded by the root lattice: an element $x \in \CU_{\mu}(\Gaff)$ is of weight $\beta \in \BQ$ if and only if $\phi_{i,0}^+ x = q^{(\beta,\alpha_i)} x \phi_{i,0}^+$ for all $i \in I$, if and only if $\phi_{i,m_i}^- x = q^{-(\beta,\alpha_i)} x \phi_{i,m_i}^-$ for all $i \in I$. 

Define the three subalgebras $\CU_{\mu}^+(\Gaff),\ \CU_{\mu}^-(\Gaff)$ and $\CU_{\mu}^0(\Gaff)$ of $\CU_{\mu}(\Gaff)$ as in the case of the quantum affine algebra, which give similarly a triangular decomposition. Then we have canonical identifications of algebras
 \begin{equation}  \label{identification subalgebras}
U_q^{\pm}(\Gaff) \cong \CU_{\mu}^{\pm}(\Gaff),\quad x_{i,m}^{\pm} \mapsto x_{i,m}^{\pm}.
 \end{equation}

 \begin{prop}\cite[Proposition 3.4]{H}   \label{prop: embedding}
      For an antidominant coweight $\mu$, there is a unique injective algebra homomorphism from the Borel subalgebra $\Borel$ to the shifted quantum affine algebra $\CU_{\mu}(\Gaff)$, denoted by  $\jmath_{\mu}$, which maps $k_0^{-1}e_0 \in U_q^-(\Gaff)$ to the corresponding element of $\CU_{\mu}^-(\Gaff)$ with respect to the identification \eqref{identification subalgebras} and for $m \in \BN$:
 $$ \jmath_{\mu}(x_{i,m}^+) = x_{i,m}^+,\quad \jmath_{\mu}(\phi_{i,m}^+) = \phi_{i,m}^+,\quad \jmath_{\mu}(x_{i,m+1}^-) = x_{i,m+1}^-. $$ 
 \end{prop}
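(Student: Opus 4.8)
The plan is to build $\jmath_{\mu}$ on a Drinfeld-type generating set of $\Borel$ and to isolate the single place where antidominance of $\mu$ intervenes. The guiding observation is that, although $\qaf$ and $\CU_{\mu}(\Gaff)$ differ in their Drinfeld--Cartan part, the Borel subalgebra $\Borel$ only ever involves the currents $\phi_{i,m}^+$ with $m \ge 0$ and the currents $\phi_{i,m}^-$ with $m \ge 1$. The latter vanish in $\qaf$ by \eqref{Drinfeld rel: initial}, and they also vanish in $\CU_{\mu}(\Gaff)$ precisely when $\mu$ is antidominant: then $m_i \le 0$, so $m \ge 1 > m_i$ and \eqref{Drinfeld rel: shift} forces $\phi_{i,m}^- = 0$. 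This exact matching is what will allow the defining relations of $\Borel$ to survive transport into $\CU_{\mu}(\Gaff)$, and it is also why the statement needs antidominance.

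For uniqueness, recall from Beck's isomorphism \cite{Beck} that $\Borel$ contains $x_{i,m}^+, \phi_{i,m}^+, x_{i,m+1}^-$ for $(i,m)\in I\times\BN$ together with $k_0^{-1}e_0$, and that these generate $\Borel$: indeed $e_i = x_{i,0}^+$ and $k_i = \phi_{i,0}^+$ for $i\in I$, while $k_0 = (k_1^{a_1}\cdots k_r^{a_r})^{-1}$ and $e_0 = k_0\,(k_0^{-1}e_0)$ recover the remaining Drinfeld--Jimbo generators. Since $\phi_{i,0}^+$ is invertible in $\CU_{\mu}(\Gaff)$ by \eqref{Drinfeld rel: shift}, the prescribed images determine $\jmath_{\mu}$ on all generators, whence uniqueness. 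For existence, I would first record a Drinfeld-type presentation of $\Borel$ with these generators and with relations \eqref{Drinfel rel: Cartan}--\eqref{Drinfeld rel: Serre} restricted to them, together with $\phi_{i,m}^+=0$ for $m<0$, $\phi_{i,m}^-=0$ for $m>0$, and invertibility of $\phi_{i,0}^+$; such a presentation follows from Beck's PBW theorem and the triangular decomposition of $\qaf$. It then remains to verify the images satisfy each relation in $\CU_{\mu}(\Gaff)$: the relations \eqref{Drinfeld rel: Drinfeld-Cartan}, \eqref{Drinfeld rel: Drinfeld}, \eqref{Drinfeld rel: Serre} hold verbatim as defining relations, and \eqref{identification subalgebras} takes care of everything internal to $U_q^+(\Gaff)$ and to $U_q^-(\Gaff)$ (in particular of $k_0^{-1}e_0$ against the $x_{i,m+1}^-$). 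The only relation requiring care is the commutator \eqref{Drinfel rel: Cartan} between $x_{i,m}^+$ ($m\ge0$) and $x_{j,n}^-$ ($n\ge1$): its right-hand side involves $\phi_{i,m+n}^{\pm}$ with $m+n\ge1$, and by the antidominance observation the $\phi^-$ contribution vanishes on both sides, so the relation is preserved. This is exactly the step that fails for non-antidominant $\mu$ and is the conceptual heart of the argument.

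For injectivity, I would exploit that $\jmath_{\mu}$ is homogeneous for the $\BQ$-grading and respects the triangular decompositions. On the positive part it coincides with the identification $U_q^+(\Gaff)\cong\CU_{\mu}^+(\Gaff)$ of \eqref{identification subalgebras}, hence is injective there; on the negative part of $\Borel$ it is the restriction of the isomorphism $U_q^-(\Gaff)\cong\CU_{\mu}^-(\Gaff)$, hence injective; and on the Cartan currents $\phi_{i,m}^+$ ($m\ge0$) injectivity follows from the Finkelberg--Tsymbaliuk structure of $\CU_{\mu}^0(\Gaff)$ in \cite{FT}, which ensures these currents satisfy no further relation beyond commutativity. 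Combining these three facts with the triangular decomposition of $\CU_{\mu}(\Gaff)$, so that a PBW basis of $\Borel$ maps to a linearly independent family, yields injectivity of $\jmath_{\mu}$.

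The main obstacle lies away from the pleasant antidominance bookkeeping and is twofold. First, one must pin down a correct Drinfeld-type presentation of $\Borel$, that is, verify that no relation among the chosen generators is missed, which is where Beck's results are genuinely used. Second, and more delicately, injectivity in the Cartan direction rests on the precise structure of $\CU_{\mu}^0(\Gaff)$ from \cite{FT}: this is the step that really uses that passing from $\qaf$ to $\CU_{\mu}(\Gaff)$ introduces no relation among the $\phi_{i,m}^+$ with $m\ge0$.
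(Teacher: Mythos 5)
A preliminary remark on the comparison itself: the paper does not prove this proposition; it is imported from \cite[Proposition 3.4]{H}, so your proposal can only be measured against that reference, not against an in-paper argument. Within your proposal, the uniqueness part is essentially sound: the images of $e_i=x_{i,0}^+$, $k_i=\phi_{i,0}^+$ ($i\in I$) and $k_0^{-1}e_0$ are prescribed, any algebra morphism sends $k_i^{-1}$ to the inverse of the image of $k_i$ (which exists because $\phi_{i,0}^+$ is invertible in $\CU_{\mu}(\Gaff)$ by \eqref{Drinfeld rel: shift}), and then $k_0^{\pm 1}$ and $e_0=k_0\,(k_0^{-1}e_0)$ are determined; this fixes $\jmath_{\mu}$ on the Drinfeld--Jimbo generators of $\Borel$. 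Your localization of one role of antidominance is also correct: $\phi_{i,m}^-=0$ in $\CU_{\mu}(\Gaff)$ for all $m\geq 1$ exactly when $\langle\mu,\alpha_i\rangle\leq 0$, which is what makes the cross relation \eqref{Drinfel rel: Cartan} with $m\geq 0$, $n\geq 1$ transport correctly.

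The existence argument, however, has a genuine gap. It rests entirely on a ``Drinfeld-type presentation of $\Borel$'' whose defining relations are the restricted Drinfeld relations, and you assert this presentation ``follows from Beck's PBW theorem and the triangular decomposition of $\qaf$''. No such presentation is known, and it does not follow from a PBW basis: a presentation of a subalgebra is a much stronger statement than a basis of it. Worse, the presentation as you describe it cannot be correct. Consider the $\BZ$-grading with $\deg x_{i,m}^{\pm}=\deg\phi_{i,m}^{\pm}=m$ (all Drinfeld relations are homogeneous). For $\Glie$ of rank $\geq 2$, the element $k_0^{-1}e_0\in U_q^-(\Gaff)$ is homogeneous of degree $1$ and weight $-\theta$, while every monomial of weight $-\theta$ in the generators $x_{i,m+1}^-$ has degree at least the height $\sum_i a_i\geq 2$ of $\theta$; so $k_0^{-1}e_0$ must indeed be adjoined as a generator, but then a correct presentation must also contain relations coupling $k_0^{-1}e_0$ to the $x_{i,m}^+$ and the $\phi_{i,m}^+$ (these encode the affine quantum Serre relations at node $0$), and the restricted Drinfeld relations contain no such relation --- even if one adds all relations internal to $U_q^-(\Gaff)$, as you suggest. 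Accordingly your verification list, which declares the commutator \eqref{Drinfel rel: Cartan} with $m\geq 0$, $n\geq 1$ to be ``the only relation requiring care'', omits precisely the hard relations, and they are genuinely delicate: expanded in Drinfeld generators, $k_0^{-1}e_0$ involves modes $x_{j,n}^-$ with $n\leq 0$ (already for $\mathfrak{sl}_3$), and commuting these with $x_{i,m}^+$ produces elements $\phi_{j,k}^-$ with $k\leq 0$, which behave differently in the two algebras: $\phi_{j,0}^-=k_j^{-1}\neq 0$ in $\qaf$, whereas $\phi_{j,0}^-=0$ in $\CU_{\mu}(\Gaff)$ as soon as $\langle\mu,\alpha_j\rangle<0$. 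Proving that the relations involving $e_0$ nevertheless hold for the proposed images --- i.e.\ that these discrepancies cancel --- is the actual mathematical content of \cite[Proposition 3.4]{H}; your scheme never confronts it, but hides it inside the unproved presentation claim. (Your injectivity argument, via the two triangular decompositions, the identifications \eqref{identification subalgebras} and the structure of $\CU_{\mu}^0(\Gaff)$ from \cite{FT}, is a reasonable outline, but it is moot until the morphism is known to exist.)
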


\subsection{Representations of shifted quantum affine algebras} We review results on representation theory of shifted quantum affine algebras from \cite{H}.  

Following \cite[\S 3.1]{HJ}, let $\mathfrak{t}^* := (\BC^{\times})^I$ be the set of $I$-tuples of nonzero complex numbers, with the group structure induced by component-wise multiplication. 
Let $\mu$ be a coweight and $M$ be a module over the shifted quantum affine algebra $\CU_{\mu}(\Gaff)$. For $\lambda  \in \mathfrak{t}^*$ and $i \in I$, let $\lambda(i) \in \BC^{\times}$ denote the $i$th component of $\lambda$ and set
$$ M_{\lambda} := \{v \in M \ |\ \phi_{i,0}^+ v = \lambda(i) v \quad \mathrm{for}\ i \in I \}. $$
If $M_{\lambda}$ is nonzero, then it is called the weight space of weight $\lambda$.  Call $M$ {\it weight graded} if $M$ is a direct sum of the weight spaces. 
\begin{rem}
   Our notion of weight graded modules is weaker than that in \cite[Definition 4.8]{H} since we have no restrictions on actions of the $\phi_{i,\langle \mu, \alpha_i\rangle}^-$; this will also be the case for our Definition \ref{defi: category O} of representation categories below. Such a difference does not affect the rationality and classification results in Proposition \ref{prop: rationality}  and Theorem \ref{thm: category O}. 
\end{rem}
 The root lattice $\BQ$, as the additive group freely generated by the simple roots $\alpha_j$ for $j \in I$, can be embedded into $\mathfrak{t}^*$ so that $\alpha_j\mapsto \overline{\alpha}_j = (q^{b_{ji}})_{i\in I}$. This is injective as $q$ is not a root of unity. The image is denoted by $\overline{\BQ}\subset \mathfrak{t}^*$. This embedding is compatible with the weight grading of representations, in the sense that for $M$ is a module over a shifted quantum affine algebra $\CU_{\mu}(\Gaff)$, then
$$\CU_{\mu}(\Gaff)_{\beta} M_{\lambda} \subset M_{\lambda\overline{\beta}} \quad \textrm{for $\beta \in \BQ$ and $\lambda \in \mathfrak{t}^*$}.$$
Call $M$ top graded if there exists $\lambda \in \mathfrak{t}^*$ such that $M$ is weight graded by $\lambda \overline{\BQ_-}$ and the weight space $M_{\lambda}$ is one-dimensional. Clearly, in this case, $\lambda$ is unique. We refer to $\lambda$ and $M_{\lambda}$  as the top weight and the top weight space. Bottom graded modules are defined in the same way with $\BQ_-$ replaced by $\BQ_+$.

We record the following rationality result from \cite[Proposition 4.10]{H}.
\begin{prop} \cite{H}   \label{prop: rationality}
Let $M$ be a weight graded $\CU_{\mu}(\Gaff)$-module whose weight spaces are finite-dimensional. Then for each weight $\lambda$ of $M$ and for $i \in I$, the image of the generating series $\phi_i^{\pm}(z)$ in $\mathrm{End}(M_{\lambda})((z^{\pm 1}))$ are Laurent expansions, around $z = 0$ and $z = \infty$ respectively, of the same operator-valued rational function of degree $\langle \mu, \alpha_i\rangle$.
\end{prop}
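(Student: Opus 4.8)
\emph{Strategy and the key reduction.} The plan is to fix a weight $\lambda$ and to work inside the finite-dimensional space $V:=M_\lambda$, writing $m_i:=\langle\mu,\alpha_i\rangle$. Since every $\phi_{i,m}^{\pm}$ has weight $0$ it preserves $V$, so $\Phi^+(z):=\phi_i^+(z)|_V$ lies in $\mathrm{End}(V)[[z]]$ with $\Phi^+(0)=\phi_{i,0}^+|_V=\lambda(i)\,\mathrm{Id}_V$, while $\Phi^-(z):=\phi_i^-(z)|_V$ lies in $z^{m_i}\mathrm{End}(V)[[z^{-1}]]$ with leading coefficient $\phi_{i,m_i}^-|_V$; both $\lambda(i)$ and $\phi_{i,m_i}^-|_V$ are invertible, the latter by the centrality and invertibility of $\phi_{i,0}^+\phi_{i,m_i}^-$ in \eqref{Drinfeld rel: shift}, and all these operators commute by \eqref{Drinfel rel: Cartan}. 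The crucial point is the bracket in \eqref{Drinfel rel: Cartan}, which upon summation gives $\Phi^+(z)-\Phi^-(z)=(q_i-q_i^{-1})\,[x_{i,0}^+,x_i^-(z)]|_V$, a \emph{bilateral} operator series. I would reduce the whole statement to the following \textbf{key claim}: for each weight space $W$ and each $i$, the root current $x_i^-(z)|_W$ (a bilateral series) is annihilated by some nonzero scalar polynomial $Q(z)$.

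\emph{Why the key claim suffices.} Granting it (applied to $V$ and to its neighbour $M_{\lambda\overline{\alpha}_i}$, and taking a common $Q$), the displayed bilateral series satisfies $Q(z)\big(\Phi^+(z)-\Phi^-(z)\big)=0$. Hence $Q(z)\Phi^+(z)=Q(z)\Phi^-(z)=:P(z)$; but $Q(z)\Phi^+(z)$ is supported in degrees $\ge 0$ while $Q(z)\Phi^-(z)$ is supported in degrees $\le m_i+\deg Q$, so $P$ has finite support, i.e.\ is a polynomial, and $f_i:=P/Q$ is a rational function whose expansions at $z=0$ and $z=\infty$ are exactly $\Phi^+(z)$ and $\Phi^-(z)$. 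The mechanism is the identity $(z-a)\,\delta(z/a)=0$: it is precisely the annihilation of a nontrivial delta-type bilateral series by a polynomial that forces two distinct one-sided expansions to come from a single rational function. The degree is then read off from the two ends: $f_i(0)=\lambda(i)\mathrm{Id}_V$ is finite and invertible, while $f_i(z)\sim\phi_{i,m_i}^-|_V\,z^{m_i}$ as $z\to\infty$ with invertible leading term, so the order of $f_i$ at infinity is $m_i=\langle\mu,\alpha_i\rangle$, as claimed.

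\emph{Proving the key claim, and the main obstacle.} This is the heart of the matter and the only step genuinely using finiteness of the weight spaces. I would first reduce to rank one: by \eqref{identification subalgebras} and the vacuity of the Serre relations \eqref{Drinfeld rel: Serre} at a single node, the currents attached to the fixed $i$ generate a homomorphic image of a shifted quantum affine $\mathfrak{sl}_2$ (parameter $q_i$), for which the $\alpha_i$-string $\bigoplus_{n\in\BZ}M_{\lambda\overline{\alpha}_i^{\,n}}$ is a representation with finite-dimensional weight spaces. For $\mathfrak{sl}_2$ the annihilation $Q(z)\,x_i^-(z)|_W=0$ is where the work lies: the operators $x_{i,k}^-|_W$ range over the finite-dimensional space $\mathrm{Hom}(W,M_{\lambda\overline{\alpha}_i^{-1}})$, and one must produce a two-sided linear recursion among them. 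I expect this to be the main obstacle, since $x_i^-(z)$ is genuinely two-sided and must be controlled as $k\to\pm\infty$ at once; I would obtain the recursion from the intertwining identity $(w-q_i^{-2}z)\,\phi_i^{\varepsilon}(z)\,x_i^-(w)=(q_i^{-2}w-z)\,x_i^-(w)\,\phi_i^{\varepsilon}(z)$ derived from \eqref{Drinfeld rel: Drinfeld-Cartan} together with \eqref{Drinfeld rel: Drinfeld}, or, failing a direct argument, by transporting the corresponding known rationality for $U_q(\hat{\mathfrak{sl}}_2)$-modules with finite-dimensional weight spaces. When the weights of $M$ are bounded above, as for the category $\Osh$, the same intertwining identity allows one instead to run a downward induction on weights, propagating the annihilating polynomial from $M_{\lambda\overline{\alpha}_i^{-1}}$ to $M_\lambda$.
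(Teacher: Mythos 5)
Your reduction is fine as far as it goes: once a nonzero scalar polynomial $Q(z)$ with $Q(0)\neq 0$ kills the bilateral series $\Phi^+(z)-\Phi^-(z)=(q_i-q_i^{-1})[x_{i,0}^+,x_i^-(z)]|_V$, the support argument shows $P:=Q\Phi^+=Q\Phi^-$ is an operator-valued polynomial, and the degree $\langle\mu,\alpha_i\rangle$ is correctly read off from the invertibility of $\lambda(i)$ and of $\phi_{i,m_i}^-|_V$ (the latter via \eqref{Drinfeld rel: shift}); the rank-one reduction along the $\alpha_i$-string is also legitimate. Note, however, that the paper does not prove this proposition at all: it records it from \cite[Proposition 4.10]{H}, so the benchmark is the proof given there (adapting arguments of \cite{HJ}), and that proof consists precisely of the step you defer.

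The genuine gap is your \textbf{key claim} -- a uniform two-sided recursion for the coefficients $x_{i,k}^-|_W$ -- which you explicitly leave unproven; by your own reduction it is essentially equivalent to the rationality being asserted, so the entire content of the proposition is postponed, and none of your three fallbacks closes it. (a) The intertwining identity from \eqref{Drinfeld rel: Drinfeld-Cartan} alone does not yield a recursion: in coefficient form it relates $A_{k-1}:=x_{i,k-1}^-|_W$ and $A_k$ through \emph{all} the operators $\phi_{i,m}^\pm$, so without rationality already in hand there is no finite-dimensional operator to which Cayley--Hamilton applies; and even granting rationality, extracting a recursion requires inverting maps of the form $X\mapsto \phi'X-q_i^{-2}X\phi$ on $\mathrm{Hom}(W,M_{\lambda\overline{\alpha}_i^{-1}})$, whose kernel is unavoidable (at $z=0$ this map is identically zero, since $\phi_{i,0}^+$ acts by $\lambda(i)$ on $W$ and by $\lambda(i)q_i^{-2}$ on $M_{\lambda\overline{\alpha}_i^{-1}}$, and eigenvalue collisions can persist for all evaluation points). (b) ``Transporting the known rationality for $U_q(\hat{\mathfrak{sl}}_2)$-modules with finite-dimensional weight spaces'' is circular: in rank one this \emph{is} the proposition, and it is not independently available in the required generality -- for $\langle\mu,\alpha_i\rangle\neq 0$ the subalgebra generated by the $i$-currents is a \emph{shifted} quantum affine $\mathfrak{sl}_2$, while classical results such as \cite{FR1} concern finite-dimensional modules of the unshifted algebra. (c) Your category-$\Osh$ induction runs in the wrong direction: you propose to propagate from the lower space $M_{\lambda\overline{\alpha}_i^{-1}}$ up to $M_\lambda$, but when weights are bounded above the only available base case is at the \emph{maximal} weights, where $x_i^+(z)$ vanishes and a linear dependence among the $x_{i,n}^-v$ (from finite-dimensionality of $M_{\lambda\overline{\alpha}_i^{-1}}$) turns, upon applying $x_{i,m}^+$ and using \eqref{Drinfel rel: Cartan}, into the desired recursion for $\bigl((\phi_{i,j}^+-\phi_{i,j}^-)v\bigr)_j$; an upward propagation has no starting point. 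Moreover the proposition is stated for arbitrary weight-graded modules with finite-dimensional weight spaces, with no upper bound on weights, so a category-$\mathcal{O}$ argument would not suffice in any case. What is missing, concretely, is the mechanism converting finite-dimensionality of weight spaces into a recursion valid on every weight space; supplying it is exactly the work done in the cited proof.
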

The following definition was made in \cite[Definition 4.8]{H}, mimicking the classical notion of category $\mathcal{O}$ for Kac-Moody algebras.
\begin{defi}\cite{H}  \label{defi: category O}
    Let $\mu$ be a coweight. Define the category $\mathcal{O}_{\mu}$ to be the full subcategory of the category of $\CU_{\mu}(\Gaff)$-modules $M$ satisfying the following properties:  
    \begin{itemize}
        \item[(i)] the $\CU_{\mu}(\Gaff)$-module $M$ is weight graded; 
        \item[(ii)] all weight spaces of $M$ are finite-dimensional; 
        \item[(iii)] there exist $\lambda_1, \lambda_2, \cdots, \lambda_s \in \mathfrak{t}^*$ such that the weights of $M$ are contained in the union of cones $$ (\lambda_1 \overline{\BQ_-}) \cup (\lambda_2 \overline{\BQ_-}) \cup \cdots \cup (\lambda_s \overline{\BQ_-}).$$ 
    \end{itemize}
    Let $\Osh$ the direct sum of all the categories $\mathcal{O}_\mu$, for $\mu \in \BP^{\vee}$.
\end{defi}

Define $\mathfrak{t}_{\mu}^*$ to be the set of $I$-tuples $(\Bf_i^{\pm}(z))_{i\in I}$ where
$\Bf_i^+(z) \in \BC[[z]]^{\times}$ is an invertible power series in $z$ and $\Bf_i^-(z) \in \BC((z^{-1}))$ is a Laurent series in $z^{-1}$ whose leading term is of degree $\langle \mu, \alpha_i\rangle$ for $i \in I$. Such an $I$-tuple is called an {\it $\ell$-weight} of coweight $\mu$. We also assign the weight $(\Bf_i^+(0))_{i\in I}$ to an $\ell$-weight, and define in this way a map $\varpi: \mathfrak{t}_{\mu}^* \longrightarrow \mathfrak{t}^*$.
  
Using the triangular decomposition of shifted quantum affine algebras, for $\Bf \in \mathfrak{t}_{\mu}^*$, define the Verma module $M_{\mu}(\Bf)$ to be the $\CU_{\mu}(\Gaff)$-module generated by $\omega$ subject to the following relations 
$$ \phi_i^{\pm}(z) \omega = \Bf_i^{\pm}(z) \omega,\quad x_i^+(z) \omega = 0 \quad \mathrm{for}\ i \in I. $$
It has a unique irreducible quotient, denoted by $L_{\mu}(\Bf)$. Its nonzero quotients are called modules of highest $\ell$-weight $\Bf$. They are all top graded of top weight $\varpi(\Bf)$. Lowest $\ell$-weight modules are defined in the same way with $x_i^+(z)$ replaced by $x_i^-(z)$, which are bottom graded modules.

Call an $\ell$-weight $\Bf \in \mathfrak{t}_{\mu}^*$ {\it rational} if for $i \in I$:
the series $\Bf_i^+(z)$ and $\Bf_i^-(z)$ are Laurent expansions of the same rational function, denoted by $\Bf_i(z)$, around $z = 0$ and $z = \infty$ respectively.
By abuse of language we identify $\Bf$ with the $I$-tuple $(\Bf_i(z))_{i\in I}$ of rational functions.  Call $\Bf$ {\it polynomial} if it is rational and each $\Bf_i(z)$ is polynomial.  Let $\mathfrak{r}_{\mu}$ be the set of rational $\ell$-weights of coweight $\mu$. Similarly, let $\mathfrak{p}_{\mu}$ be the set of polynomial $\ell$-weights of coweight $\mu$; it is non empty if and only if the coweight $\mu$ is dominant. 
\begin{example} \cite[Definition 3.7]{HJ} \label{defi: prefund}
 For $j \in I$ and $a \in \BC^{\times}$, the $I$-tuple $(1-za \delta_{ij})_{i\in I}$ defines an element of $\mathfrak{p}_{\varpi_j^{\vee}}$, denoted by $\Psi_{j,a}$ and called a prefundamental $\ell$-weight.
\end{example}
We are ready to state the classification result from \cite[Theorem 4.12]{H}.
\begin{theorem}\cite{H} \label{thm: category O}
Let $\mu$ be a coweight. 
\begin{itemize}
    \item [(i)] For $\Bf \in \mathfrak{t}_{\mu}^*$, the irreducible module $L_{\mu}(\Bf)$ is in category $\mathcal{O}_{\mu}$ if and only if $\Bf$ is rational. It is one-dimensional if and only if $\Bf$ is polynomial.

    \item[(ii)] The $L_{\mu}(\Bf)$ for $\Bf \in \mathfrak{r}_{\mu}$ form the set of mutually non-isomorphic irreducible modules in the category $\mathcal{O}_{\mu}$.
\end{itemize}  
\end{theorem}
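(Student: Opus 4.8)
\emph{Plan.} Throughout write $\lambda = \varpi(\Bf) \in \mathfrak{t}^*$ for the weight attached to $\Bf \in \mathfrak{t}_{\mu}^*$. The organizing observation is that, as recalled above, every nonzero quotient of the Verma module $M_{\mu}(\Bf)$ — in particular $L_{\mu}(\Bf)$ — is top graded of top weight $\lambda$, so its weights all lie in the single cone $\lambda \overline{\BQ_-}$ and it is automatically weight graded with one-dimensional top weight space. Hence conditions (i) and (iii) of Definition \ref{defi: category O} hold for free, and the assertion ``$L_{\mu}(\Bf) \in \mathcal{O}_{\mu}$'' is \emph{equivalent} to finite-dimensionality of all weight spaces of $L_{\mu}(\Bf)$. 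This reduces part (i) to two implications about weight-space dimensions.

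The forward implication of (i) is quick. If $L_{\mu}(\Bf) \in \mathcal{O}_{\mu}$, then in particular the top weight space $L_{\mu}(\Bf)_{\lambda} = \BC\omega$ is finite-dimensional, and on it $\phi_i^{\pm}(z)$ acts by the scalar series $\Bf_i^{\pm}(z)$. Applying the rationality result Proposition \ref{prop: rationality} to $L_{\mu}(\Bf)_{\lambda}$, the series $\Bf_i^+(z)$ and $\Bf_i^-(z)$ are the expansions at $0$ and $\infty$ of one rational function of degree $\langle \mu, \alpha_i\rangle$, i.e. $\Bf \in \mathfrak{r}_{\mu}$. The ``one-dimensional iff polynomial'' clause I would settle directly: a one-dimensional module is an algebra character, and since $x_{i,m}^{\pm}$ has nonzero weight $\pm\alpha_i$ it must act by $0$; the relation $[x_{i,m}^+, x_{i,n}^-] = (\phi_{i,m+n}^+-\phi_{i,m+n}^-)/(q_i-q_i^{-1})$ then forces $\phi_{i,k}^+ = \phi_{i,k}^-$ for all $k$, which combined with $\phi_{i,k}^+=0$ for $k<0$, $\phi_{i,k}^-=0$ for $k>\langle\mu,\alpha_i\rangle$ and invertibility of $\phi_{i,0}^+\phi_{i,\langle\mu,\alpha_i\rangle}^-$ means $\Bf_i(z)$ is a polynomial of degree $\langle\mu,\alpha_i\rangle$ with nonzero constant term, i.e. $\Bf \in \mathfrak{p}_{\mu}$. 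Conversely any polynomial $\Bf$ yields a well-defined character $x_{i,m}^{\pm}\mapsto 0$, $\phi_i^{\pm}(z)\mapsto \Bf_i(z)$, giving a one-dimensional $L_{\mu}(\Bf)$.

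The substantial content is the reverse implication of (i), which also supplies the existence half of (ii): for $\Bf \in \mathfrak{r}_{\mu}$, the module $L_{\mu}(\Bf)$ has finite-dimensional weight spaces. My plan is to reduce to the prefundamental building blocks of Example \ref{defi: prefund}. Since each $\Bf_i(z)$ is rational with $\Bf_i(0)=\lambda(i)\neq\infty$, it factors as $\Bf = \mathbf{1}_{\lambda}\cdot\prod_{(i,a)}\Psi_{i,a}^{\,n_{i,a}}$ over a finite set of pairs with $n_{i,a}\in\BZ$, where $\mathbf{1}_{\lambda}$ is the constant (polynomial, coweight $0$) $\ell$-weight of weight $\lambda$, and $\sum_a n_{i,a} = \langle\mu,\alpha_i\rangle$, so the factorization respects the coweight grading. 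Granting that each positive prefundamental module $L_{\varpi_i^{\vee}}(\Psi_{i,a})$ lies in $\mathcal{O}_{\varpi_i^{\vee}}$, each negative one $L_{-\varpi_i^{\vee}}(\Psi_{i,a}^{-1})$ in $\mathcal{O}_{-\varpi_i^{\vee}}$, and that the fusion product of modules from the various $\mathcal{O}_{\mu'}$ again has finite-dimensional weight spaces, the fusion product $F$ of $L(\mathbf{1}_{\lambda})$ with these prefundamental modules is a highest-$\ell$-weight module of $\ell$-weight $\Bf$ lying in $\mathcal{O}_{\mu}$; then $L_{\mu}(\Bf)$, as a quotient of $F$, inherits finite-dimensional weight spaces. \emph{This is the main obstacle}: it requires both the explicit construction of the prefundamental modules (for instance as limits of finite-dimensional modules in the spirit of \cite{HJ}) and the nontrivial fact that the fusion product preserves finiteness of weight spaces. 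For antidominant $\mu$ I would keep in reserve the alternative of transporting the Hernandez--Jimbo category $\mathcal{O}$ for $\Borel$ through the embedding $\jmath_{\mu}$ of Proposition \ref{prop: embedding}.

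Finally, for part (ii) it remains to prove that every simple $S\in\mathcal{O}_{\mu}$ has the form $L_{\mu}(\Bf)$ with $\Bf\in\mathfrak{r}_{\mu}$, and that distinct $\Bf$ give non-isomorphic modules. Since the weights of $S$ lie in a finite union of cones each bounded above, and only finitely many weights exceed any given one, the weight set admits a maximal element $\nu$; as $x_{i,m}^+$ raises weight by $\alpha_i$, it annihilates $S_{\nu}$. The commutative Drinfeld--Cartan subalgebra $\CU_{\mu}^0(\Gaff)$ preserves the finite-dimensional space $S_{\nu}$, hence has a joint eigenvector $v\in S_{\nu}$; the defining relations of $\CU_{\mu}(\Gaff)$ force its eigenvalue series to lie in $\mathfrak{t}_{\mu}^*$, so $v$ is a highest-$\ell$-weight vector of some $\ell$-weight $\Bf$, and simplicity gives $S=\CU_{\mu}(\Gaff)v\cong L_{\mu}(\Bf)$ with $\Bf\in\mathfrak{r}_{\mu}$ by the forward implication of (i). Non-redundancy is immediate, since the one-dimensional top weight space of $L_{\mu}(\Bf)$ carries the $\CU_{\mu}^0(\Gaff)$-eigenvalue $\Bf$ as an isomorphism invariant.
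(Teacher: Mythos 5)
This theorem is imported by the paper from \cite{H} (it is Theorem 4.12 there) and is stated without proof, so there is no in-paper argument to compare against; your proposal must be measured against the proof in the cited literature, whose architecture it largely reproduces. The parts you prove in full are correct: the forward direction of (i) via Proposition \ref{prop: rationality} applied to the top weight space; the equivalence "one-dimensional $\Leftrightarrow$ polynomial" by the character argument (weight reasons force $x_{i,m}^{\pm} \mapsto 0$, the commutator relation forces $\phi_{i,k}^+ = \phi_{i,k}^-$, and the support conditions in \eqref{Drinfeld rel: shift} force a polynomial of degree $\langle\mu,\alpha_i\rangle$ with nonzero constant term); and part (ii) (existence of a maximal weight from condition (iii) of Definition \ref{defi: category O}, a joint eigenvector of the commutative Drinfeld--Cartan subalgebra on the finite-dimensional space $S_{\nu}$, hence $S \cong L_{\mu}(\Bf)$, with non-redundancy because the top $\ell$-weight is an isomorphism invariant).

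The genuine gap is exactly where you write "granting": the converse of (i) --- finite-dimensionality of all weight spaces of $L_{\mu}(\Bf)$ for rational $\Bf$ --- is the substantial content of the theorem, and your proposal assumes rather than proves its two pillars: (a) that the negative prefundamental modules $L_{-\varpi_i^{\vee}}(\Psi_{i,a}^{-1})$ lie in $\mathcal{O}_{-\varpi_i^{\vee}}$, and (b) that fusion products preserve category $\Osh$. Neither is formal: (a) rests on the asymptotic construction of \cite{HJ} (limits of Kirillov--Reshetikhin modules for the Borel subalgebra) together with the extension of Borel-module structures across $\jmath_{\mu}$ (Theorem \ref{thm: Borel O}(ii)), and (b) is the content of \cite[\S 5]{H} (deformed Drinfeld coproduct and specialization). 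Moreover, what you keep "in reserve" is in fact the route the literature takes, and the one this paper itself uses elsewhere (proof of Theorem \ref{thm: truncation shifted}(i)): write $\Bf = \Bm\,\Bn^{-1}$ with $\Bm, \Bn$ polynomial, realize $L_{\mu}(\Bf)$ as an irreducible subquotient of the pullback $L_{\zeta}(\Bm) \otimes L_{-\nu}(\Bn^{-1})$ along the homomorphism \eqref{rel: tensor product one-dim}, and reduce to the antidominant case, which is handled by the Borel theory of \cite{HJ}; this bypasses the fusion product entirely, which matters because in \cite{H} the classification is established before fusion is constructed. One simplification you also missed: over the shifted algebra the positive prefundamental modules $L_{\varpi_i^{\vee}}(\Psi_{i,a})$ need no asymptotic construction at all --- they are one-dimensional, by the very "polynomial $\Rightarrow$ one-dimensional" argument you gave yourself.
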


The disjoint union of the $\mathfrak{t}_{\mu}^*$ over all coweights $\mu$ forms a multiplicative group under component-wise multiplication. We have $\mathfrak{t}_{\mu}^* \mathfrak{t}_{\nu}^* \subset \mathfrak{t}_{\mu+\nu}^*$. The subset of rational $\ell$-weights, namely the disjoint union of the $\mathfrak{r}_{\mu}$ over all coweights $\mu$, forms a subgroup denoted by $\mathfrak{r}$. The subset of polynomial $\ell$-weights forms a submonoid denoted by $\mathfrak{p}$. The subset $\mathfrak{p}_0$ of constant $\ell$-weights is a subgroup that is identified with the group $\mathfrak{t}^*$ of $I$-tuples of nonzero complex numbers.

At the level of representations, there is the fusion product construction \cite[\S 5]{H} which is compatible with the above multiplication of rational $\ell$-weights. Consider $V_1$ and $V_2$ respectively in categories $\mathcal{O}_{\mu_1}$ and $\mathcal{O}_{\mu_2}$. Assume that $V_1$ is of highest $\ell$-weight $\Bf_1$ and $V_2$ is of highest $\ell$-weight $\Bf_2$. On the underlying vector space $V_1 \otimes V_2$ there is a $\CU_{\mu_1+\mu_2}(\Gaff)$-module structure, called {\it fusion product}, denoted by $V_1 \star V_2$, which is in category $\mathcal{O}_{\mu_1+\mu_2}$ and of highest $\ell$-weight $\Bf_1\Bf_2$. In particular, one realizes $L_{\mu_1+\mu_2}(\Bf_1\Bf_2)$ as the unique irreducible quotient of $L_{\mu_1}(\Bf_1) \ast L_{\mu_2}(\Bf_2)$.

In the case of polynomial $\ell$-weights, there is a simpler tensor product construction. 
Let $\zeta$ be a dominant coweight and $\mathbf{b} \in \mathfrak{p}_{\zeta}$ be a polynomial $\ell$-weight. In \cite[Example 10.2]{Z} we defined an algebra homomorphism $\CU_{\mu}(\Gaff) \longrightarrow \CU_{\mu-\zeta}(\Gaff)$ by \footnote{This is slightly more general than the shift homomorphisms from \cite[Section 10.(vii)]{FT}. It comes from the Drinfeld formal coproduct.}
\begin{equation}  \label{rel: tensor product one-dim}
     x_i^+(z) \mapsto \mathbf{b}_i(z) x_i^+(z),\quad x_i^-(z) \mapsto x_i^-(z), \quad \phi_i^{\pm}(z) \mapsto \mathbf{b}_i(z) \phi_i^{\pm}(z).
\end{equation}
The pullback of a $\CU_{\mu-\zeta}(\Gaff)$-module $V$ along this morphism can be thought of as a tensor product module $L_{\zeta}(\mathbf{b}) \otimes V$ in the sense of Drinfeld--Jimbo, as explained in {\it loc.cit}. If $V$ is of highest $\ell$-weight $\Bf \in \mathfrak{t}_{\mu-\zeta}^*$, then the tensor product $L_{\zeta}(\mathbf{b}) \otimes V$ is of highest $\ell$-weight $\mathbf{b}\mathbf{f}$.  The tensor product defines a functor from the category of $\CU_{\mu-\zeta}(\Gaff)$-modules to the category of $\CU_{\mu}(\Gaff)$-modules.

We refer to the one-dimensional $\CU_0(\Gaff)$-modules $L_0(\lambda)$ for $\lambda \in \mathfrak{p}_0$ as {\it invertible modules}, in the sense that the tensor product functor $L_0(\lambda) \otimes -$ defines an auto-equivalence of the category of $\CU_{\mu}(\Gaff)$-modules whose inverse is given by $L_0(\lambda^{-1}) \otimes -$.

\subsection{Category $\widehat{\mathcal{O}}$ for the quantum affine algebra}  \label{subsection: O qaf}
Recall in the zero-shifted case the quotient map $\CU_0(\Gaff) \longrightarrow \qaf$. Define the category $\widehat{\mathcal{O}}$ to be the full subcategory of $\mathcal{O}_0$ whose objects are $\CU_0(\Gaff)$-modules in category $\mathcal{O}_0$ that factorize through the quotient. Then the Drinfeld--Jimbo Hopf algebra structure on $\qaf$ endows the category $\widehat{\mathcal{O}}$ with a monoidal structure. Its simple objects are parameterized by a subset of $\mathfrak{r}_0$ consisting of rational $\ell$-weights $(\Bf_i(z))_{i\in I}$ of degree $0$ such that $\Bf_i(0)\Bf_i(\infty) = 1$ for all $i \in I$. Let $\widehat{\mathfrak{r}}$ denote this subset; it is a subgroup of $\mathfrak{r}$. 

We shall need a distinguished family of finite-dimensional irreducible representations in category $\widehat{\mathcal{O}}$.
For $j \in I$, define $\varpi_j \in \mathfrak{p}_0$ to be the $I$-tuple of complex numbers $(q_i^{\delta_{ij}})_{i \in I}$. The $j$th {\it fundamental representation}, denoted by $W^{(j)}$, is the irreducible $\CU_0(\Gaff)$-module in category $\mathcal{O}_0$ defined by
$$W^{(j)} := L_0(\varpi_j \Psi_{j,q_j^2}^{-1} \Psi_{j,1}). $$
It is a fundamental representation in the sense of Chari-Pressley \cite{CP}.

Recall the lacing number $r^{\vee}$, the dual Coxeter number $h^{\vee}$ and the involution $i \mapsto \overline{i}$ on $I$. 
It is well-known that $W^{(j)}$ is in category $\widehat{\mathcal{O}}$. It is finite-dimensional of lowest $\ell$-weight \cite[Lemma 6.8]{FM}
$$\varpi_{\overline{j}}^{-1}\Psi_{\overline{j}, q^{r^{\vee}h^{\vee}} }^{-1}  \Psi_{\overline{j}, q^{r^{\vee}h^{\vee}} q_j^2}.  $$
In particular, $\varpi_j$ is the top weight of $W^{(j)}$ and $\varpi_{\overline{j}}^{-1}$ is the bottom weight.
Let $\omega_+$ and $\omega_-$ be spanning vectors of the top weight space and the bottom weight space respectively. Then for $i\in I$ and $s \neq 0$ we have
\begin{equation}   \label{equ: fundamental rep}
    h_{i,s} \omega_+ = \delta_{ij} \frac{q_i^{2s}-1}{s(q-q^{-1})} \omega_+, \quad h_{i,s} \omega_- = \delta_{i\overline{j}} \frac{q^{r^{\vee}h^{\vee}s} (1-q_i^{2s}) }{s(q-q^{-1})} \omega_-.
\end{equation}
 
\subsection{Category $\mathcal{O}$ for the Borel subalgebra} Weight graded modules over the Borel subalgebra $\Borel$ are defined in the usual way based on the joint action of the Cartan generators $k_i = \phi_{i,0}^+$ for $i \in I$. As in Defintion \ref{defi: category O} we have the category $\mathcal{O}$ of $\Borel$-modules; see \cite[Definition 3.8]{HJ}. The Hopf algebra structure on $\Borel$ makes $\mathcal{O}$ a monoidal category. 

The Borel subalgebra admits a triangular decomposition, and given a rational $\ell$-weight $\Bf = (\Bf_i(z))_{i\in I} \in \mathfrak{r}$, let $L(\Bf)$ denote the unique irreducible $\Borel$-module which contains a nonzero vector $\omega$ such that
$$ \phi_i^+(z) \omega = \Bf_i(z) \omega,\quad e_i \omega = 0 \quad \textrm{for $i \in I$}. $$
It is top graded as in the shifted case. Recall from Proposition \ref{prop: embedding} the embedding of the Borel subalgebra into an anti-dominantly shifted quantum affine algebra. The following result combines \cite[Theorem 3.11]{HJ} with \cite[Corollary 4.11]{H}.
\begin{theorem}\cite{HJ,H}  \label{thm: Borel O}
\begin{itemize}
\item[(i)] The $L(\Bf)$ for $\Bf \in \mathfrak{r}$ form the set of mutually non-isomorphic irreducible modules in the category $\mathcal{O}$.
\item[(ii)] Let $\mu$ be an antidominant coweight. If $\Bf \in \mathfrak{r}_{\mu}$, then the pullback of the $\CU_{\mu}(\Gaff)$-module $L_{\mu}(\Bf)$ along $\jmath_{\mu}$ is irreducible and isomorphic to the $\Borel$-module $L(\Bf)$.
\end{itemize}
\end{theorem}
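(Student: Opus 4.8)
The two assertions are of different natures: (i) is the classification of simple objects of category $\mathcal{O}$ for $\Borel$ by highest-weight theory, while (ii) compares $L_\mu(\Bf)$ with $L(\Bf)$ through the embedding $\jmath_\mu$. I would treat them in turn. For (i), the plan is classical highest-weight theory. Given an irreducible $M$ in $\mathcal{O}$, the cone condition of Definition \ref{defi: category O} provides a maximal weight $\lambda$; since $M_\lambda$ is finite-dimensional and stable under the commuting operators $\phi_{i,m}^+$, it contains a joint eigenvector $\omega$, which by maximality of $\lambda$ satisfies $e_i\omega = x_{i,0}^+\omega = 0$ for $i \in I$. Using the Drinfeld--Cartan relation \eqref{Drinfeld rel: Drinfeld-Cartan} one upgrades this to $x_{i,m}^+\omega = 0$ for all $m \ge 0$, so $\omega$ is a highest $\ell$-weight vector of $\ell$-weight $\Bf = (\Bf_i^+(z))_{i\in I}$ defined by $\phi_i^+(z)\omega = \Bf_i^+(z)\omega$; irreducibility then gives $M \cong L(\Bf)$ with one-dimensional top space, so $\Bf$ is a complete isomorphism invariant. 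The only substantial point is that each $\Bf_i^+(z)$ is the expansion of a rational function. I would prove this by restricting to the $i$-th rank-one Borel subalgebra and using that the weight space $M_{\lambda\overline{\alpha_i}^{-1}}$ just below the top is finite-dimensional: the $\widehat{\mathfrak{sl}_2}$-type relations \eqref{Drinfel rel: Cartan}--\eqref{Drinfeld rel: Drinfeld-Cartan} then force the coefficients of $\phi_i^+(z)$ to obey a finite linear recursion, which is exactly rationality. That every rational $\Bf$ occurs follows by realizing $L(\Bf)$ as the irreducible quotient of the evident highest-$\ell$-weight module, giving the bijection with $\mathfrak{r}$.

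For (ii), fix an antidominant $\mu$ and $\Bf \in \mathfrak{r}_\mu$, with highest $\ell$-weight vector $\omega \in L_\mu(\Bf)$. By Proposition \ref{prop: embedding} the map $\jmath_\mu$ sends $x_{i,0}^+ \mapsto x_{i,0}^+$ and $\phi_{i,m}^+ \mapsto \phi_{i,m}^+$; since $\Bf_i^+(z)$ is the expansion at $z=0$ of the rational function $\Bf_i(z)$, the vector $\omega$ is a $\Borel$-highest $\ell$-weight vector of $\ell$-weight $\Bf \in \mathfrak{r}$. Hence $\Borel\,\omega \subseteq \jmath_\mu^* L_\mu(\Bf)$ is a highest-$\ell$-weight $\Borel$-module with unique irreducible quotient $L(\Bf)$, and it remains to prove that the pullback is irreducible and equal to it. I propose to obtain both at once by showing that every $\Borel$-submodule of $\jmath_\mu^* L_\mu(\Bf)$ is in fact stable under the whole of $\CU_\mu(\Gaff)$: for then irreducibility of $L_\mu(\Bf)$ over $\CU_\mu(\Gaff)$ forces the pullback to be $\Borel$-simple, and being a simple highest-$\ell$-weight module of $\ell$-weight $\Bf$ it must equal $L(\Bf)$ by part (i).

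The main obstacle, and the heart of the matter, is precisely this stability claim, because $\jmath_\mu(\Borel)$ supplies only the modes $x_{i,m}^+\ (m \ge 0)$, $\phi_{i,m}^+\ (m \ge 0)$, $x_{i,m}^-\ (m \ge 1)$ and the element $\jmath_\mu(k_0^{-1}e_0)$, whereas $\CU_\mu(\Gaff)$ is generated using all modes $x_{i,m}^\pm, \phi_{i,m}^\pm$ with $m \in \BZ$; in particular the degree-zero currents $x_{i,0}^-$ are absent. The mechanism I would use to recover them is rationality combined with finite-dimensionality of weight spaces. On a fixed weight space $W$, Proposition \ref{prop: rationality} says $\phi_i^\pm(z)|_W$ are the two expansions of a single rational function of degree $\langle\mu,\alpha_i\rangle \le 0$; feeding this into relation \eqref{Drinfeld rel: Drinfeld-Cartan} shows the matrix coefficients of $x_i^-(z)$ between weight spaces are likewise rational, so the operators $x_{i,m}^-|_W$ satisfy a finite linear recursion. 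Antidominance is what fixes the direction of this recursion: because the shift degree is $\le 0$ and the Borel already contains the modes $m \ge 1$, the recursion expresses each missing mode $x_{i,m}^-|_W$ with $m \le 0$, and dually each $\phi_{i,m}^-|_W$ and each $x_{i,m}^+|_W$ with $m < 0$, as a linear combination of the modes present in $\jmath_\mu(\Borel)$. Consequently any weight-graded $\Borel$-stable subspace is automatically stable under every Drinfeld current, which is the stability claim. I expect the delicate bookkeeping of these degrees — checking that antidominance makes every recursion run from the Borel modes toward the missing ones — to be the technical crux, and the precise place where the hypothesis that $\mu$ be antidominant (the same hypothesis guaranteeing the existence of $\jmath_\mu$, and matching the fact that $\Borel$ contains the strictly positive-degree currents $x_{i,m}^-$ with $m \ge 1$) is genuinely needed.
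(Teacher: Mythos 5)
The paper itself offers no proof of this theorem: it is recalled verbatim from \cite{HJ} (Theorem 3.11) and \cite{H} (Corollary 4.11). So your proposal has to be judged as a reconstruction of those proofs, and it contains two genuine gaps.

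\textbf{Part (i).} Your classification direction is essentially correct: the maximal-weight argument produces a highest $\ell$-weight vector (maximality alone kills all $x_{i,m}^+$, $m\geq 0$, since they all raise the weight by $\alpha_i$), and your rank-one recursion --- a linear dependence among the vectors $x_{i,n}^-\omega$ in the finite-dimensional space $M_{\lambda\overline{\alpha_i}^{-1}}$, hit with the operators $x_{i,m}^+$ and the relation $[x_{i,m}^+,x_{i,n}^-]=\phi_{i,m+n}^+/(q_i-q_i^{-1})$ --- is indeed the standard proof that $\Bf_i^+(z)$ satisfies a constant-coefficient linear recurrence, hence is rational. The gap is the converse, which you dismiss in one sentence. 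The irreducible quotient of the Verma-type $\Borel$-module exists for \emph{every} $\ell$-weight $\Bf$, rational or not; and since the lower part of $\Borel$ has infinite-dimensional weight spaces (all the $x_{i,m}^-$, $m\geq 1$, have weight $-\alpha_i$), that Verma module has infinite-dimensional weight spaces. The entire content of the ``if'' direction is that rationality of $\Bf$ forces the weight spaces of $L(\Bf)$ to be finite dimensional, i.e.\ that $L(\Bf)$ lies in category $\mathcal{O}$ at all; your argument never uses rationality at this point. In \cite{HJ} this is the hard direction: it requires constructing the prefundamental representations $L(\Psi_{i,a}^{\pm 1})$ as asymptotic limits of Kirillov--Reshetikhin modules and realizing a general $L(\Bf)$ as a subquotient of tensor products of such modules. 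No formal highest-weight argument is known to replace this.

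\textbf{Part (ii).} The architecture is sound: showing that every $\Borel$-submodule $N$ of $\jmath_\mu^*L_\mu(\Bf)$ is $\CU_\mu(\Gaff)$-stable does give both irreducibility and the identification with $L(\Bf)$, and your step recovering the modes $\phi_{i,m}^-$ from Proposition \ref{prop: rationality} is correct (write the common rational function as $G(z)/h(z)$ with $h$ a scalar polynomial; the coefficients of $G$, hence of the expansion at $\infty$, lie in the space of operators preserving $N$). But the claim that \eqref{Drinfeld rel: Drinfeld-Cartan} plus rationality of the $\phi_i^{\pm}$ makes ``the matrix coefficients of $x_i^-(z)$ rational'' is false. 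Between one-dimensional weight spaces, \eqref{Drinfeld rel: Drinfeld-Cartan} forces the geometric relation $x_{i,m}^-|_W=\gamma\, x_{i,m-1}^-|_W$ for a fixed scalar $\gamma$ and \emph{all} $m\in\BZ$, so $x_i^-(z)$ acts as a multiple of a formal delta function $\delta(\gamma z)$ --- exactly what happens on the $\mathfrak{sl}_2$ negative prefundamental module --- and such a doubly-infinite series is not an expansion of any rational function. Moreover, on higher-dimensional weight spaces the recursion that \eqref{Drinfeld rel: Drinfeld-Cartan} does give, obtained by pairing $x_j^-(w)$ with $\phi_i^+(z)$ and its invertible constant term $\phi_{i,0}^+$, runs \emph{upward} ($x_{j,m}^-$ determines $x_{j,m+1}^-$ via a Sylvester-type expression), which is useless here since $\Borel$ already contains the modes $m\geq 1$; inverting it is not possible in general. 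The correct downward mechanism --- and the step your write-up is missing --- is to pair $x_i^-(w)$ with $\phi_i^-(z)$ (same node, so $b_{ii}=2d_i\neq 0$) and compare the top coefficients: using that $\phi_{i,0}^+\phi_{i,m_i}^-$ is central, hence scalar on $L_\mu(\Bf)$, the coefficient of $z^{m_i}$ in \eqref{Drinfeld rel: Drinfeld-Cartan} expresses $x_{i,n}^-|_W$ as a nonzero multiple of $q^{-b_{ii}}\phi_{i,m_i-1}^-x_{i,n+1}^- - x_{i,n+1}^-\phi_{i,m_i-1}^-$ restricted to $W$. This preserves $N$ once your $\phi^-$-stability step is in place, and downward induction from the Borel mode $x_{i,1}^-$ (and analogously from $x_{i,0}^+$ for the negative modes of $x_i^+$) finishes the argument. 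So your proof of (ii) is salvageable, but the crucial step is misjustified as written. Note also that your closing remark mislocates the role of antidominance: the leading coefficient $\phi_{i,m_i}^-$ is invertible for every shift $\mu$, so the downward recursion is not where antidominance enters; it is needed only for the embedding $\jmath_\mu$ of Proposition \ref{prop: embedding} to exist.
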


The $\Borel$-module $L(\Bf)$ is one-dimensional if and only if the $\ell$-weight $\Bf$ is {\it constant}, namely, each rational function $\Bf_i(z)$ is a nonzero complex number, equivalently, $\Bf \in \mathfrak{p}_0$. For $\Bf \in \mathfrak{r}$ and $\lambda \in \mathfrak{p}_0$, we have canonical $\Borel$-module isomorphisms
$$ L(\Bf) \otimes L(\lambda) \cong L(\lambda\Bf) \cong L(\lambda) \otimes L(\Bf). $$

\section{Truncated shifted quantum affine algebras}  \label{sec: FT}
In this section we recall the notion of truncated shifted quantum affine algebras from \cite{FT} and their Jordan--H\"older property from \cite{H}. 

\subsection{A-series and T-series}  \label{ss: A T series}
Fix $\mu = \sum_{i\in I}m_i\varpi^\vee $ to be a coweight. Recall the generating series $\phi_i^{\pm}(z)$ with coefficients in the commutative subalgebra $\CU_{\mu}^0(\Gaff)$ of the shifted quantum affine algebra: $\phi_i^+(z)$ is a power series in $z$ of leading term $\phi_{i,0}^+$, while $\phi_i^-(z)$ is a Laurent series in $z^{-1}$ of leading term $\phi_{i,m_i}^- z^{m_i}$. We normalize them to obtain invertible power series in $z^{\pm 1}$ of constant term 1:
$$ \overline{\phi}_i^+(z) := (\phi_{i,0}^+)^{-1} \phi_i^+(z), \quad \overline{\phi}_i^-(z) := (\phi_{i,m_i}^-)^{-1} z^{-m_i} \phi_i^-(z).  $$
Define the Drinfeld--Cartan elements $h_{i,s}$, for $i \in I$ and $s \in \BZ_{\neq 0}$, by\footnote{We follow the convention of \cite[(2.2)]{FR1} so that our $h_{i,s}$ corresponds to $[d_i] h_{i,s}$ in \cite{Beck}. This makes our formulas in \eqref{def: A}--\eqref{def: T} and notably \eqref{def: R0} much simpler.}
\begin{equation}   \label{def: h}
\overline{\phi}_i^{\pm}(z) =  \exp(\pm (q-q^{-1}) \sum_{\pm s>0} h_{i,s} z^s).
\end{equation}
The $h_{i,s}$ mutually commute and we have for $(i,j,s,m) \in I^2 \times \BZ_{\neq 0} \times \BZ$:
\begin{equation} \label{Drinfeld rel: h x}
[h_{i,s}, x_{j,m}^{\pm}] = \pm \frac{[b_{ij}s]_q}{s} x_{j,m+s}^{\pm} = \pm \frac{[s]_q}{s} B_{ij}(q^s) x_{j,m+s}^{\pm}.
\end{equation}

Recall that the quantum Cartan matrix $B(q)$ is invertible. Let $\widetilde{B}(q^s)$, for $s \in \BZ_{\neq 0}$, denote the inverse of $B(q^s)$. Define the A-series $\BA_i^{\pm}(z)$ and the T-series $\BT_i^{\pm}(z)$, for $i \in I$, to be the following invertible power series in $z^{\pm 1}$ with coefficients in the commutative subalgebra $\CU_{\mu}^0(\Gaff)$ of constant term 1:
\begin{align} 
   \BA_i^{\pm}(z) &:= \exp\left(\pm (q^{-1}-q)\sum_{\pm s > 0} \sum_{j\in I} \widetilde{B}_{ji}(q^s) \frac{1-q_i^{-2s}}{q^s-q^{-s}} h_{j,s} z^s\right),  \label{def: A} \\
   \BT_i^{\pm}(z) &:= \exp\left(\pm (q-q^{-1})\sum_{\pm s > 0} \sum_{j\in I} \widetilde{B}_{ji}(q^s) \frac{1}{q^s-q^{-s}} h_{j,s} z^s\right).  \label{def: T}
\end{align}

\begin{rem} The A-series $\BA_i^-(z)$ appeared first as $ Y_{i,q_i^{-1}}^{-1}q^{2(\rho,\omega_i)} \widetilde{k}_i^{-1}$ in \cite[(3.11)]{FR1}, and the T-series $\BT_i^-(z)$ appeared first as $T_i(z^{-1})$ in \cite[Proposition 5.5]{FH} as a limit of a transfer-matrix. The other halves of these series were introduced in \cite[\S 9.2]{H}: our series $\BA_i^{\pm}(z), \BT_i^+(z)$ and $\BT_i^-(z)$ correspond to the series $Y_i^{\pm}(z)^{-1} \phi_i^{\pm}, T_i^+(z^{-1})^{-1}$ and $T_i^-(z^{-1})$ in {\it loc.cit.}. We modify slightly the definition of \cite{H} in order to make the relations \eqref{rel: A and T}--\eqref{rel: T x-} below more uniform.\end{rem}

We collect the main properties of A-series and T-series.
\begin{prop}\cite{FR1,H,Z}
    In the shifted quantum affine algebra $\CU_{\mu}(\Gaff)$, the following relations hold for $i, j \in I$ and $m \in \BZ$:
    \begin{gather}
    \BA_i^{\pm}(z) = \frac{\BT_i^{\pm}(zq_i^{-2})}{\BT_i^{\pm}(z)},  \label{rel: A and T}  \\
    \overline{\phi}_i^{\pm}(z) = \frac{1}{\BA_i^{\pm}(z)\BA_i^{\pm}(zq_i^2)} \prod_{j\in I: c_{ji}<0} \prod_{t=1}^{-c_{ji}} \BA_j^{\pm}(z q_j^{c_{ji}+2t}), \label{rel: GKLO}   \\
    \BT_i^{\pm}(z) x_{j,m}^- \BT_i^{\pm}(z)^{-1} = x_{j,m}^- - \delta_{ij} x_{i,m\pm 1}^- z^{\pm 1}.  \label{rel: T x-}
\end{gather}
\end{prop}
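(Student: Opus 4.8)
The statement to prove relates three families of invertible power series in the commutative subalgebra $\CU_{\mu}^0(\Gaff)$: the normalized Cartan series $\overline{\phi}_i^{\pm}(z)$, the A-series $\BA_i^{\pm}(z)$, and the T-series $\BT_i^{\pm}(z)$. Each is defined as the exponential of a linear combination of the commuting Drinfeld--Cartan elements $h_{j,s}$. The plan is to reduce all three identities \eqref{rel: A and T}--\eqref{rel: T x-} to the commutative setting by taking logarithms and comparing the coefficients of each $h_{j,s} z^s$ separately, since the $h_{i,s}$ commute and the exponential is injective on the relevant completion.

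For \eqref{rel: A and T}, I would first compute $\log \BT_i^{\pm}(zq_i^{-2}) - \log \BT_i^{\pm}(z)$ directly from \eqref{def: T}. The coefficient of $h_{j,s}z^s$ in $\log \BT_i^{\pm}(z)$ is $\pm(q-q^{-1})\widetilde{B}_{ji}(q^s)/(q^s-q^{-s})$; replacing $z$ by $zq_i^{-2}$ multiplies the $z^s$-term by $q_i^{-2s} = q^{-2d_i s}$. Subtracting, the coefficient picks up a factor $(q_i^{-2s}-1)$, and after matching $\mp(q-q^{-1}) = \pm(q^{-1}-q)$ one recovers exactly the coefficient defining $\log\BA_i^{\pm}(z)$ in \eqref{def: A}. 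This is a routine coefficient comparison.

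The identity \eqref{rel: GKLO} is the heart of the proposition and the step I expect to be the main obstacle. After taking logarithms it becomes a linear identity among the $h_{j,s}$, and using \eqref{rel: A and T} one can rewrite everything in terms of the T-series, reducing it to the Cartan relation
\[
\overline{\phi}_i^{\pm}(z) = \frac{\BT_i^{\pm}(zq_i^{-2})}{\BT_i^{\pm}(zq_i^2)}\prod_{j\neq i}\prod_{t=1}^{-c_{ji}}\frac{\BT_j^{\pm}(zq_j^{c_{ji}+2t}q_j^{-2})}{\BT_j^{\pm}(zq_j^{c_{ji}+2t})}.
\]
Comparing the coefficient of $h_{j,s}z^s$ on both sides, and recalling from \eqref{def: h} that this coefficient on the left is $\pm(q-q^{-1})\delta_{ij}$, the claim reduces to the scalar identity
\[
\delta_{ij} = \sum_{k\in I}\widetilde{B}_{kj}(q^s)\Bigl[(q_k^{-2s}-1)\delta_{ki}\cdot\tfrac{1}{\cdots} + \cdots\Bigr],
\]
which one recognizes as $\sum_k \widetilde{B}_{kj}(q^s)B_{ki}(q^s) = \delta_{ij}$, i.e.\ the defining property of the inverse quantum Cartan matrix. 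The genuine work is to verify that collecting the exponents $q_j^{c_{ji}+2t}$ over $t=1,\dots,-c_{ji}$ produces exactly $B_{ji}(q^s)=[b_{ji}]_{q^s}$ (a telescoping $q$-integer sum), so that the sum over $j$ assembles into the $(j,i)$-entry of $B(q^s)$ and the inverse relation $\widetilde{B}B = \mathrm{Id}$ closes the identity.

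Finally, \eqref{rel: T x-} is not an identity among commuting series but a conjugation formula, so I would prove it by exponentiating the commutator relation \eqref{Drinfeld rel: h x}. Since $\BT_i^{\pm}(z) = \exp(\pm(q-q^{-1})\sum_{\pm s>0}\sum_j \widetilde{B}_{ji}(q^s)(q^s-q^{-s})^{-1}h_{j,s}z^s)$ and the $h_{j,s}$ act on $x_{k,m}^-$ by \eqref{Drinfeld rel: h x}, conjugation by $\BT_i^{\pm}(z)$ shifts the index $m$; the key simplification is that $\sum_j \widetilde{B}_{ji}(q^s)B_{jk}(q^s) = \delta_{ik}$ again collapses the sum, leaving only the $j$ with $B_{jk}(q^s)$ contribution, so that the net effect on $x_{k,m}^-$ is precisely the single-term shift $-\delta_{ik}x_{i,m\pm 1}^-z^{\pm1}$. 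The main care here is keeping track of signs and the direction of the shift ($s>0$ versus $s<0$), and confirming that the higher-order terms in the exponential vanish because the shift operator is already affine (degree one) in $m$.
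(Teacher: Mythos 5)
Your proposal takes a genuinely different route from the paper: the paper's own proof is purely by citation (\eqref{rel: A and T} and \eqref{rel: GKLO} are quoted from \cite[(9.18)--(9.19)]{H}, the latter going back to \cite[(4.9)]{FR1}, and \eqref{rel: T x-} from \cite[Lemma 9.1]{Z}, extended to the positive part by an anti-automorphism), whereas you verify all three relations directly from the definitions \eqref{def: h}, \eqref{def: A}, \eqref{def: T}. For \eqref{rel: A and T} your coefficient comparison is correct as stated. For \eqref{rel: GKLO} the plan is also sound: the telescoping identity $(1-q_j^{-2s})\sum_{t=1}^{-c_{ji}}q_j^{s(c_{ji}+2t)} = q_j^{-sc_{ji}} - q_j^{sc_{ji}} = -(q^s-q^{-s})B_{ji}(q^s)$, together with $(1-q_i^{-2s})(1+q_i^{2s}) = (q^s-q^{-s})B_{ii}(q^s)$ for the $i$-th factor, reduces everything to $\sum_{j} \widetilde{B}_{kj}(q^s)B_{ji}(q^s) = \delta_{ki}$, exactly as you say. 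One transcription slip: in your displayed reduction the $i$-th factor should be $\BT_i^{\pm}(zq_i^{2})/\BT_i^{\pm}(zq_i^{-2})$, not its reciprocal, since $\BA_i^{\pm}(z)\BA_i^{\pm}(zq_i^2)$ telescopes to $\BT_i^{\pm}(zq_i^{-2})/\BT_i^{\pm}(zq_i^{2})$.

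The genuine gap is in your justification of \eqref{rel: T x-}. It is not true that ``the higher-order terms in the exponential vanish''; none of them do. From \eqref{Drinfeld rel: h x}, after the collapse $\sum_k\widetilde{B}_{ki}(q^s)B_{kj}(q^s)=\delta_{ij}$, the first commutator is already an infinite sum,
\[
[\log \BT_i^{+}(z),\, x_{j,m}^-] \;=\; -\,\delta_{ij}\sum_{s>0}\frac{z^s}{s}\,x_{i,m+s}^-,
\]
and the iterated commutators $(\mathrm{ad})^n$ are nonzero for every $n$. The correct mechanism is a resummation, not a vanishing: on the span of the $x_{i,m'}^-$ introduce the shift operator $S\colon x_{i,m'}^-\mapsto z\,x_{i,m'+1}^-$; then $\mathrm{ad}_{\log\BT_i^+(z)}$ acts as $-\sum_{s>0}S^s/s=\log(1-S)$, so conjugation by $\BT_i^+(z)$ acts as $\exp\bigl(\log(1-S)\bigr)=1-S$, which is precisely $x_{j,m}^-\mapsto x_{j,m}^--\delta_{ij}\,z\,x_{i,m+1}^-$; the same computation with $s<0$ and $S\colon x_{i,m'}^-\mapsto z^{-1}x_{i,m'-1}^-$ gives the minus case. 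So the conclusion you want is true and the repair is short, but as written the step explaining why only two terms survive would fail, and this is exactly the content of \cite[Lemma 9.1]{Z} that the paper invokes.
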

\begin{proof}
    The first relation is \cite[(9.19)]{H}. The second relation is \cite[(9.18)]{H}, whose negative part appeared first as \cite[(4.9)]{FR1}. The negative part of the third relation follows from \cite[Lemma 9.1]{Z}, whose proof works directly for the positive part.
\end{proof}
Eq.\eqref{rel: GKLO} coincides with \cite[(6.23)]{FT} after a change of variable $z \mapsto z^{-1}$.

In the case of the zero-shift $\mu = 0$, Eqs.\eqref{def: A}--\eqref{def: T} make sense in the quantum affine algebra $\qaf$, which is in addition a Hopf algebra. Following \cite[Definition 9.2]{Z}, we define power series $\Theta_i^{\pm}(z) \in \qaf^{\otimes 2}[[z^{\pm 1}]]$ by factorizing the coproduct of the invertible power series $\BT_i^{\pm}(z)$:
\begin{equation}  \label{def: Theta}
    \Delta(\BT_i^{\pm}(z)) = (1\otimes \BT_i^{\pm}(z)) \times\Theta_i^{\pm}(z) \times (\BT_i^{\pm}(z) \otimes 1). 
\end{equation}
We record a polynomiality property for the coproduct of the T-series. 
For $\beta \in \BQ$, the evaluation $\langle \varpi_i^{\vee}, \beta\rangle$ is the coefficient of $\alpha_i$ in $\beta$.

\begin{theorem}\cite{Z}  \label{thm: corpoduct T}
    For $i \in I$, the power series $\Theta_i^{\pm}(z)$ is a sum, over $\beta \in \BQ_+$, of polynomials in $z^{\pm 1}$ with coefficients in  $U_q^-(\Gaff)_{-\beta}\otimes U_q^+(\Gaff)_{\beta}$ whose degrees are bounded by $\langle \varpi_i^{\vee}, \beta\rangle$.
\end{theorem}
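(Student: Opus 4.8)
The plan is to reduce the statement to an analysis of the Drinfeld--Jimbo coproduct of the Drinfeld--Cartan currents and then to extract the two separate assertions — the shape $U_q^-(\Gaff)_{-\beta}\otimes U_q^+(\Gaff)_{\beta}$ and the degree bound $\langle\varpi_i^{\vee},\beta\rangle$ — by independent arguments. Since by \eqref{def: T} and \eqref{def: h} the series $\BT_i^{\pm}(z)$ is the exponential of a series in the mutually commuting Cartan currents, I would first write $\Theta_i^{\pm}(z)=(1\otimes\BT_i^{\pm}(z))^{-1}\Delta(\BT_i^{\pm}(z))(\BT_i^{\pm}(z)\otimes 1)^{-1}$ and use $\Delta(\BT_i^{\pm}(z))=\exp(\Delta(\log\BT_i^{\pm}(z)))$. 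The coproduct of each $h_{j,s}$ has the form ``primitive part $h_{j,s}\otimes 1+1\otimes h_{j,s}$ plus off-diagonal corrections'', the corrections having strictly positive weight in the second tensor leg. A Baker--Campbell--Hausdorff rearrangement then isolates the two commuting Cartan factors $1\otimes\BT_i^{\pm}(z)$ and $\BT_i^{\pm}(z)\otimes 1$ (whose product realizes the outer factors of \eqref{def: Theta}) and exhibits $\Theta_i^{\pm}(z)$ as the exponential of the assembled off-diagonal corrections; in particular $\Theta_{i,0}^{\pm}(z)=1\otimes 1$, consistent with $\langle\varpi_i^{\vee},0\rangle=0$.

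For the shape, the point is that the specific coefficients $\widetilde{B}_{ji}(q^s)$ entering \eqref{def: T} are tuned so that the residual Cartan contributions in both legs cancel, leaving coefficients purely in $U_q^-(\Gaff)\otimes U_q^+(\Gaff)$. Conceptually this is forced by the quasi-triangular structure: $\BT_i^-(z)$ is a limit of transfer matrices of the form $(\pi\otimes\mathrm{id})\mathcal{R}$ built from the universal $R$-matrix, and the factorization \eqref{def: Theta} of its coproduct is, via $(\mathrm{id}\otimes\Delta)\mathcal{R}=\mathcal{R}_{13}\mathcal{R}_{12}$ and hence $(\pi\otimes\Delta)\mathcal{R}=(\pi\otimes\mathrm{id}\otimes\mathrm{id})(\mathcal{R}_{13}\mathcal{R}_{12})$, governed by the reduced $R$-matrix $\barR\in\widehat{\bigoplus}_{\beta}U_q^-(\Gaff)_{-\beta}\otimes U_q^+(\Gaff)_{\beta}$, whose $\beta$-homogeneous pieces have exactly the claimed shape. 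I would use this interpretation to pin down the shape cleanly and, where possible, to import the polynomiality input recorded in Theorem \ref{thm: poly R Borel}.

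For the degree bound I would exploit the loop-rotation automorphism $\tau_a\colon x_{j,m}^{\pm}\mapsto a^m x_{j,m}^{\pm},\ \phi_{j,m}^{\pm}\mapsto a^m\phi_{j,m}^{\pm}$, which is a Hopf automorphism satisfying $\tau_a(\BT_i^{\pm}(z))=\BT_i^{\pm}(az)$; by uniqueness of the factorization this yields $(\tau_a\otimes\tau_a)\Theta_i^{\pm}(z)=\Theta_i^{\pm}(az)$, so the coefficient of $z^d$ is $\tau$-homogeneous of degree $d$. The bound itself I would obtain by induction on the height of $\beta$ using the conjugation relation \eqref{rel: T x-}: since $\BT_i^{\pm}(z)$ commutes with $x_j^-(w)$ for $j\neq i$ and multiplies $x_i^-(w)$ by $1-(z/w)^{\pm 1}$, a single power of $z^{\pm 1}$ is produced exactly when the node $i$ is involved, so passing from weight $\beta$ to $\beta+\alpha_j$ should raise the $z$-degree by at most $\delta_{ij}$, giving $\langle\varpi_i^{\vee},\beta\rangle$ after $\mathrm{ht}(\beta)$ steps. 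The main obstacle is precisely making this last step rigorous: the conjugation identity \eqref{rel: T x-} controls how $\BT_i^{\pm}(z)$ moves past $x^-$, but converting it into a recursion for $\Theta_i^{\pm}(z)$ requires controlling the full Drinfeld--Jimbo coproduct of the Drinfeld lowering currents $x_{j,n}^-$ for all $n$ (not merely $f_j=x_{j,0}^-$), which is genuinely intricate. Establishing the sharpness of this bound, together with the exact cancellation of the Cartan contributions needed in the shape step, is where the real work lies.
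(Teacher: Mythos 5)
There is a genuine gap: your proposal is a plan whose two decisive steps are explicitly deferred (``where the real work lies''), and those steps are precisely the content of the theorem. Concretely: (1) The Baker--Campbell--Hausdorff step cannot work as described. The off-diagonal corrections to $\Delta(h_{j,s})$ lie in weight spaces $U_q^-(\Gaff)_{-\beta}\otimes U_q^+(\Gaff)_{\beta}$ with $\beta\neq 0$ and commute neither with one another nor with the primitive parts, so there is no justification for exhibiting $\Theta_i^{\pm}(z)$ as the ordinary exponential of ``assembled off-diagonal corrections''; the paper's own $\mathfrak{sl}_2$ example, $\Theta_1^-(z)=\exp_q\left((q-q^{-1})\,x_{1,0}^-\otimes x_{1,-1}^+z^{-1}\right)$, is a $q$-exponential, which a naive BCH assembly would not produce. (2) The quasi-triangularity argument for the shape is heuristic, not a proof: the T-series of \cite{FH} arise as limits of \emph{transfer matrices}, i.e.\ traces $(\mathrm{tr}_V\pi_V\otimes\mathrm{id})\mathcal{R}$, and applying $(\mathrm{id}\otimes\Delta)\mathcal{R}=\mathcal{R}_{13}\mathcal{R}_{12}$ turns $\Delta(\BT_i^{\pm}(z))$ into a trace of a product of two L-operators --- a sum over matrix elements of $V$ --- which does not visibly factor into the form \eqref{def: Theta}, let alone with $\Theta_i^{\pm}(z)$ of the claimed shape. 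Extracting the factorization and the shape from this picture is exactly the hard analysis carried out in \cite{Z}. (3) The degree bound: as you note yourself, converting the conjugation relation \eqref{rel: T x-} into a recursion for $\Theta_i^{\pm}(z)$ requires closed control of the Drinfeld--Jimbo coproduct of all the Drinfeld currents $x_{j,n}^-$, which is not available, and you offer no mechanism to supply it. The loop-rotation homogeneity observation only shows the coefficient of $z^d$ has ``degree $d$'' in the grading by mode index; it gives no upper bound on $d$.

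For comparison, the paper does not reprove this statement from scratch at all: it is quoted from \cite{Z}. The negative part is exactly \cite[Theorem 9.5]{Z} (since $\BT_i^-(z)$ is $T_i(z^{-1})$ of \emph{loc.\ cit.}), and the positive part is deduced from the negative part by a short symmetry argument: the algebra anti-isomorphism and coalgebra anti-isomorphism $\Omega:U_{q^{-1}}(\Gaff)\to\qaf$ satisfies $\Omega(h_{j,s})=h_{j,-s}$, hence $\Omega(\BT_i^{\pm}(z))=\BT_i^{\mp}(z^{-1})$ by \eqref{def: T}, and applying $\Omega\otimes\Omega$ to the factorization \eqref{def: Theta} transports the result. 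If you want a self-contained treatment you would have to reproduce the proof of \cite[Theorem 9.5]{Z}; otherwise, the efficient route is the paper's: cite the negative part and prove only the elementary $\Omega$-symmetry step, which your proposal does not contain.
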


\begin{proof}
    The negative part follows from \cite[Theorem 9.5]{Z} since our $\BT_i^-(z)$ corresponds to $T_i(z^{-1})$ in {\it loc.cit}. For the positive part, let $U_{q^{-1}}(\Gaff)$ be the quantum affine algebra with deformation parameter $q^{-1}$. We keep the same notations for generators and generating series of this new quantum group. Then the assignments, for $0\leq j \leq r$,
    $$ e_j \mapsto f_j, \quad f_j \mapsto e_j,\quad k_j \mapsto k_j^{-1} $$
    extend uniquely to an algebra anti-isomorphism and co-algebra anti-isomorphism $\Omega: U_{q^{-1}}(\Gaff) \longrightarrow \qaf$. By \cite[\S 4.1]{Beck} we know that $\Omega(h_{j,s}) = h_{j,-s}$. This together with Eq.\eqref{def: T} implies that $\Omega(\BT_i^{\pm}(z)) = \BT_i^{\mp}(z^{-1})$. The positive part of the theorem follows from the negative part by applying $\Omega$ to the factorization \eqref{def: Theta}.
\end{proof}

\begin{example}
    In \cite[Example 9.6]{Z} it is shown for $\mathfrak{g} = \mathfrak{sl}_2$ that
    $$ \Theta_1^-(z) =  \exp_q\left(\ (q-q^{-1}) x_{1,0}^- \otimes x_{1,-1}^+ z^{-1} \right), $$
    where the $q$-exponential $\exp_q$ is defined by
    $$ \exp_q(x) := \sum_{k=0}^{+\infty} \frac{1}{(1)_q(2)_q\cdots (k)_q} x^k \quad \mathrm{with}\ (k)_q := \frac{q^{2k}-1}{q^2 - 1}. $$
    Following the proof of Theorem \ref{thm: corpoduct T}, we replace $(q, z^{-1})$ in $\Theta_1^-(z)$ by $(q^{-1}, z)$, apply $\Omega \otimes \Omega$ to the tensor product and then exchange the two tensor factors to get $\Theta_1^+(z)$:
    $$\Theta_1^+(z) = \exp_{q^{-1}}\left( (q^{-1}-q) x_{1,1}^- \otimes x_{1,0}^+ z  \right). $$
\end{example}

\subsection{Simply-connected truncations and intermediate truncations}  \label{ss: sc truncations}
These are quotients of shifted quantum affine algebras whose definitions are variations of the original definition of truncated shifted quantum affines made in in \cite[Definition 8.12]{FT} which we recall in the next subsection. They will play a central role in the proof of the main result.

Fix a dominant coweight $\nu = \sum_{i\in I} n_i \varpi_i^{\vee}$ and a polynomial $\ell$-weight $\Bp \in \mathfrak{p}_{\nu}$. Each component $\Bp_i(z)$ is a complex polynomial of degree $n_i$ and has a nonzero constant term. Let $\Bp_{i,n_i}$ and $\Bp_{i,0}$ denote its dominant coefficient and constant term respectively. 

\begin{rem} These polynomials $\Bp_i(z)$ are sometimes called truncation parameters or flavour parameters \cite{BFN}.
\end{rem}

We attach two new $I$-tuples 
$$\Bp^* = (\Bp^*_i(z))_{i\in I}\text{ and }\Bp^{\sharp} = (\Bp_i^{\sharp}(z))_{i \in I}$$ 
of power series in $z$ and $z^{-1}$ respectively. First define $\lambda_{i,s} \in \BC$, for $i \in I$ and $s \neq 0$, by the following relations in $\BC[[z]]$ and $\BC[[z^{-1}]]$ respectively:
$$ \Bp_i(z) \Bp_{i,0}^{-1} = \exp\left(\sum_{s>0} \lambda_{i,s} z^s\right),\quad \Bp_i(z) z^{-n_i} \Bp_{i,n_i}^{-1} = \exp\left(\sum_{s<0} \lambda_{i,s} z^s\right).  $$
Then the power series $\Bp_i^*(z) \in \BC[[z]]$ is given by (compare with Eq.\eqref{def: A})
\begin{align}     
    \Bp_i^*(z) &:= \exp\left(\sum_{j\in I} \sum_{s>0} \widetilde{B}_{ji}(q^s) \frac{1-q_i^{-2s}}{q^s-q^{-s}} \lambda_{j,s} z^s\right).  \label{def: p star}
\end{align}
Replacing $s > 0$ with $s < 0$ at the right-hand side of the above equation defines the power series $\Bp_i^{\sharp}(z) \in \BC[[z^{-1}]]$. Both power series have 1 as the constant term. 

Let $\mu = \sum_i m_i \varpi_i^{\vee}$ be another coweight. In the shifted quantum affine algebra $\CU_{\mu}(\Gaff)$ we define for $i \in I$ the modified A-series as power series in $z^{\pm}$ with coefficients in $\CU_{\mu}^0(\Gaff)$:
\begin{equation}   \label{def: SA}
    \mathscr{A}_i^+(z) := \Bp_i^*(z) \BA_i^+(z), \qquad \mathscr{A}_i^-(z) := \Bp_i^{\sharp}(z) \BA_i^-(z).
\end{equation}
It should be noted that both series $\SA_i^{\pm}(z)$ depend implicitly on the polynomial $\ell$-weight $\Bp$. Sometimes we will write $\SA_i^{\Bp,\pm}(z)$ to indicate their dependence. 
We obtain the following relations as a modification of \eqref{rel: GKLO}:
\begin{align}
    \phi_i^+(z) &= \frac{\phi_{i,0}^+\Bp_i(z) \Bp_{i,0}^{-1} }{\SA_i^+(z)\SA_i^+(zq_i^2)} \prod_{j\in I: c_{ji} < 0} \prod_{t=1}^{-c_{ji}} \SA_j^+(zq_j^{c_{ji}+2t}), \label{rel: GKLO+}  \\
    \phi_i^-(z) &= \frac{\phi_{i,m_i}^-\Bp_i(z)  \Bp_{i,n_i}^{-1} z^{m_i-n_i}}{\SA_i^-(z)\SA_i^-(zq_i^2)} \prod_{j\in I: c_{ji} < 0} \prod_{t=1}^{-c_{ji}} \SA_j^-(zq_j^{c_{ji}+2t}). \label{rel: GKLO-}
\end{align}
For $k \in \BZ$ let $\SA_{i,k}^{\pm}$ denote the coefficient of $z^k$ in $\SA_i^{\pm}(z) \in \CU_{\mu}^0(\Gaff)[[z^{\pm 1}]]$.
\begin{defi}  \label{defi: truncation}
    The pair $(\mu, \Bp)$ of coweight and polynomial $\ell$-weight is called {\it truncatable} if the unique solution to the linear system
    $$  \sum_{j\in I} c_{ji} t_j = n_i - m_i \quad \mathrm{for}\ i \in I$$
    is given by nonnegative integers $t_i \in \BN$. In this situation, define the {\it simply-connected truncated shifted quantum affine algebra} $\CU_{\mu}^{\Bp}(\Gaff)$, or {\it simply-connected truncation} for short, to be the quotient algebra of $\CU_{\mu}(\Gaff)$ by the following relations for $i \in I$ and $k \in \BZ$:
    \begin{gather}
        \SA_{i,k}^+ = 0 \textrm{\ for $k > t_i$}, \quad    \SA_{i,k}^- = \SA_{i,-t_i}^- \SA_{i,k+t_i}^+ \textrm{\ for $k \in \BZ$},  \label{truncation: poly}  \\
        \Bp_{i,n_i}\phi_{i,0}^+  \prod_{j\in I: c_{ji}<0} (\SA_{j,t_j}^+ q_j^{t_j})^{-c_{ji}} = \Bp_{i,0} \phi_{i,m_i}^- (\SA_{i,t_i}^+ q_i^{t_i})^2.   \label{truncation: simply connected}
    \end{gather}
    Define the {\it intermediate truncated shifted quantum affine algebra} $\overline{\CU}_{\mu}^{\Bp}(\Gaff)$, or {\it intermediate truncation} for short, to be the quotient algebra of $\CU_{\mu}^{\Bp}(\Gaff)$ by the following relations:
    \begin{gather}
         \phi_{i,0}^+ \phi_{i,0}^- = \Bp_{i,0}^{-1} \Bp_{i,n_i} \prod_{j\in I} (-q_j)^{-t_j c_{ji}} \textrm{\ for $i \in I$}.  \label{truncation: intermediate}
    \end{gather}
\end{defi}

\begin{rem}  
   By taking $k = 0$ in Eq.\eqref{truncation: poly} and noting that $\SA_{i,0}^- = 1$, we observe that in the quotient $\SA_{i,t_i}^+$ is invertible with inverse $\SA_{i,-t_i}^-$.  The defining relations of the simply-connected truncation $\CU_{\mu}^{\Bp}(\Gaff)$ are designed so that in the quotient: the power series $\SA_i^+(z)$ becomes a polynomial, denoted by $\SA_i(z)$, of degree $t_i$ whose constant term is 1 and whose dominant coefficient is invertible; the power series $\phi_i^+(z)$ and $\phi_i^-(z)$ are Laurent expansions, around $z = 0$ and $z=\infty$ respectively, of precisely the same vector-valued rational function of degree $m_i$:
    $$ \frac{\phi_{i,0}^+\Bp_i(z) \Bp_{i,0}^{-1} }{\SA_i(z) \SA_i(zq_i^2)} \prod_{j\in I: c_{ji} < 0} \prod_{t=1}^{-c_{ji}} \SA_j(z q_j^{c_{ji}+2t}). $$
    Notice that $\SA_i^{\pm}(z)$ and $\CU_{\mu}^{\Bp}(\Gaff)$ only depend on the normalized $I$-tuple $(\Bp_i(z) \Bp_{i,0}^{-1})_{i\in I}$ of polynomials of constant term 1 (also called Drinfeld polynomials). 
\end{rem}

Combining the above remark with Proposition \ref{prop: rationality}, we have the following criterion for a $\CU_{\mu}(\Gaff)$-module to descend to a simply-connected truncation.
\begin{cor}  \label{cor: truncation criterion}
    Let $\mu$ be a coweight and  $\Bp$ be a polynomial $\ell$-weight. Let $M$ be a weight graded $\CU_{\mu}(\Gaff)$-module whose weight spaces are finite-dimensional. Suppose that for each $i \in I$, the image of the generating series $\SA_i^{\Bp,+}(z)$ in $\mathrm{End} \left(M [[z]]\right)$ is a polynomial with invertible dominant coefficient. Then:
    \begin{itemize}
        \item[(i)] the pair $(\mu, \Bp)$ is truncatable, and
        \item[(ii)] the $\CU_{\mu}(\Gaff)$-module $M$ factorizes through the simply-connected truncation $\CU_{\mu}^{\Bp}(\Gaff)$.
    \end{itemize}
\end{cor}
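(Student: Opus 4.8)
The plan is to read off all the required relations from the factorizations \eqref{rel: GKLO+}--\eqref{rel: GKLO-} together with the rationality of the Cartan currents (Proposition \ref{prop: rationality}), the guiding principle being that the defining relations of the simply-connected truncation are exactly the algebraic shadow of the two statements ``$\SA_i^+(z)$ is a polynomial of degree $t_i$ with invertible leading coefficient'' and ``$\phi_i^+(z)$ and $\phi_i^-(z)$ expand one common rational function''. First I would fix a weight space $M_\lambda$, which is finite-dimensional and is preserved by each $\SA_{i,k}^{\pm}$ since these lie in the commutative subalgebra $\CU_{\mu}^0(\Gaff)$; because $\SA_{i,d_i}^+$ is invertible on all of $M$ it restricts to an invertible operator on $M_\lambda$, so $\SA_i^+(z)$ acts on every weight space as a polynomial of one and the same degree $d_i \geq 0$, with constant term $1$ and invertible leading coefficient. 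By Proposition \ref{prop: rationality} each $\phi_i^{\pm}(z)$ acts on $M_\lambda$ as the two expansions of a single operator-valued rational function $\Phi_i(z)$ of degree $m_i$. Substituting the polynomial $\SA_i^+(z)$ into the right-hand side of \eqref{rel: GKLO+} exhibits $\Phi_i(z)$ as an explicit ratio of polynomials whose leading coefficient is a product of the invertible operators $\SA_{j,d_j}^+$ with $\phi_{i,0}^+ = \lambda(i) \neq 0$ and the nonzero scalar $\Bp_{i,n_i}\Bp_{i,0}^{-1}$, hence nonzero; its net degree is therefore exactly $n_i - \sum_{j} c_{ji} d_j$. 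Matching with $m_i$ gives $\sum_j c_{ji} d_j = n_i - m_i$, and since the Cartan matrix is invertible the $d_j$ must coincide with the $t_j$ of Definition \ref{defi: truncation}. As each $d_j$ is a nonnegative integer, $(\mu,\Bp)$ is truncatable, which proves (i) and simultaneously the relations $\SA_{i,k}^+ = 0$ for $k > t_i$ in \eqref{truncation: poly}.

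Next I would extract relation \eqref{truncation: simply connected} by comparing leading behaviour at $z = \infty$. The expansion of $\Phi_i(z)$ at infinity is $\phi_i^-(z)$, with leading coefficient $\phi_{i,m_i}^-$; computing that same leading coefficient from the explicit ratio of polynomials coming from \eqref{rel: GKLO+}, and using the identity $\sum_{t=1}^{-c_{ji}}(c_{ji}+2t) = -c_{ji}$ to collect the powers of $q_j$, yields precisely \eqref{truncation: simply connected} after clearing denominators.

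It remains to establish the second family of relations in \eqref{truncation: poly}, equivalently that $\SA_i^-(z)$ acts on $M$ as $\SA_{i,-t_i}^- z^{-t_i}\SA_i^+(z)$, the normalized expansion at infinity of the polynomial $\SA_i^+(z)$. Here I would argue by uniqueness of the factorization \eqref{rel: GKLO-}: after taking logarithms, the passage from the tuple $(\SA_i^-(z))_{i}$ (power series in $z^{-1}$ with constant term $1$) to $(\phi_i^-(z))_{i}$ is the linear change of variables governed by the quantum Cartan matrices $B(q^s)$, which are invertible for every $s \neq 0$; hence $(\SA_i^-(z))_i$ is uniquely determined by $(\phi_i^-(z))_i$. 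On the other hand, feeding the candidate $\SA_{i,-t_i}^- z^{-t_i}\SA_i^+(z)$ into the right-hand side of \eqref{rel: GKLO-} and simplifying, using the already-established relation \eqref{truncation: simply connected} to reduce the scalar prefactor and the same $q_j$-power identity to see that all powers of $z$ cancel, returns exactly $\phi_i^-(z)$. By uniqueness the candidate equals the true $\SA_i^-(z)$, giving the remaining relations. Having verified every defining relation \eqref{truncation: poly}--\eqref{truncation: simply connected} on $M$, the $\CU_{\mu}(\Gaff)$-action factors through $\CU_{\mu}^{\Bp}(\Gaff)$, proving (ii).

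The step I expect to be the main obstacle is the last one. The positive and negative halves of the Drinfeld--Cartan data are a priori independent, and the only bridge between them is the rationality furnished by Proposition \ref{prop: rationality}; turning this analytic input into the precise identity \eqref{truncation: poly} requires both the explicit leading-coefficient bookkeeping and the invertibility of the quantum Cartan matrix to guarantee that \eqref{rel: GKLO-} pins down $\SA_i^-(z)$ uniquely. Keeping track of the shift scalars $\Bp_{i,0}, \Bp_{i,n_i}$ and the powers $q_j^{\pm t_j}$ through this computation is where one must be most careful.
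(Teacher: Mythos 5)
Your proposal is correct and takes essentially the same route as the paper: the paper dispatches this corollary in one line by combining Proposition \ref{prop: rationality} with the remark following Definition \ref{defi: truncation} (the truncation relations are designed exactly so that in the quotient $\SA_i^+(z)$ is a polynomial of degree $t_i$ with invertible dominant coefficient and $\phi_i^{\pm}(z)$ are the two expansions of one rational function), and your argument is precisely the detailed execution of that combination via \eqref{rel: GKLO+}--\eqref{rel: GKLO-}, degree matching, and leading-coefficient comparison. Your bookkeeping --- the identity $\sum_{t=1}^{-c_{ji}}(c_{ji}+2t)=-c_{ji}$, the invertibility of the matrices $B(q^s)$ to pin down $\SA_i^-(z)$ uniquely, and (to be precise in the last step) taking the candidate in the form $(\SA_{i,t_i}^+)^{-1}z^{-t_i}\SA_i^+(z)$ so that it has constant term $1$, whence $\SA_{i,-t_i}^-=(\SA_{i,t_i}^+)^{-1}$ and \eqref{truncation: poly} follows --- is exactly what is needed to fill in the details the paper leaves implicit.
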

\begin{example}
    Any one-dimensional module descends to a simply-connected truncation: if $\zeta$ is a dominant coweight and $\Bp$ is a polynomial $\ell$-weight of coweight $\zeta$, then the one-dimensional $\CU_{\zeta}(\Gaff)$-module $L_{\zeta}(\Bp)$ factorizes through the quotient $\CU_{\zeta}^{\Bp}(\Gaff)$. This is not true for intermediate truncations due to the additional condition \eqref{truncation: intermediate}.
\end{example}
 Simply-connected truncations are compatible with fusion products and tensor products by one-dimensional modules. If $V_1$ is a highest $\ell$-weight $\CU_{\mu_1}^{\Bp_1}(\Gaff)$-module and $V_2$ is a highest $\ell$-weight $\CU_{\mu_2}^{\Bp_2}(\Gaff)$-module, both in category $\Osh$, then their fusion product $V_1 \ast V_2$ is a $\CU_{\mu_1+\mu_2}^{\Bp_1\Bp_2}(\Gaff)$-module, as proved in \cite[Proposition 12.5]{H}. If $V$ is a $\CU_{\mu}^{\Bp}(\Gaff)$-module and $\mathbf{b}$ is a polynomial $\ell$-weight of coweight $\zeta$, then by Eq.\eqref{rel: tensor product one-dim} the tensor product $L_{\zeta}(\mathbf{b}) \otimes V$ is a $\CU_{\mu+\zeta}^{\Bp\mathbf{b}}(\Gaff)$-module. Similarly compatibility holds true for intermediate truncations.

\subsection{Adjoint truncations} We recall the original truncated shifted quantum affine algebras defined in \cite{FT} and their Jordan--H\"older property.

Fix $\mu = \sum_{i\in I} \varpi_i^{\vee}$ a coweight. The {\it adjoint shifted quantum affine algebra} $\CU_{\mu}^{\mathrm{ad}}(\Gaff)$ is the associative algebra with generators\footnote{Our $\psi_i^{\pm}$ correspond to $\overline{\phi}_i^{\pm}$ in \cite{H}. We change the notations to avoid confusion with the normalized series $\overline{\phi}_i^{\pm}(z)$ of Subsection \ref{ss: A T series}. }
$$ x_{i,m}^{\pm}, \quad \phi_{i,m}^{\pm}, \quad \psi_i^{\pm} \quad \mathrm{for}\ (i, m) \in I \times \BZ $$
subject to all the defining relations \eqref{Drinfel rel: Cartan}--\eqref{Drinfeld rel: shift}  of the shifted quantum affine algebra $\CU_{\mu}(\Gaff)$ and the following additional relations:
\begin{gather}
    \psi_i^+ \psi_i^- \quad \textrm{is central and invertible}, \quad \phi_{i,0}^+ = \prod_{j\in I} (\psi_j^+)^{c_{ji}}, \quad \phi_{i,m_i}^- = \prod_{j\in I} (\psi_j^-)^{c_{ji}},  \label{rel: psi} \\
    \psi_i^+ \phi_{j,m}^{\pm} = \phi_{j,m}^{\pm} \psi_i^+, \quad
    \psi_i^+ x_{j,m}^{\pm} = q_i^{\pm \delta_{ij}} x_{j,m}^{\pm} \psi_i^+.  \label{rel: psi weight}
\end{gather}
By definition we have a natural algebra homomorphism from $\CU_{\mu}(\Gaff)$ to $\CU_{\mu}^{\mathrm{ad}}(\Gaff)$. The generating series $\BT_i^{\pm}(z)$ and $\BA_i^{\pm}(z)$ make sense in $\CU_{\mu}^{\mathrm{ad}}(\Gaff)$. 

Fix $\nu = \sum_{i\in I} \nu_i \varpi_i^{\vee}$ a dominant coweight and $\Bp$ a polynomial $\ell$-weight of coweight $\nu$ as in the situation of Subsection \ref{ss: sc truncations}. Let us view the series $\SA_i^{\pm}(z)$ of Eq.\eqref{def: SA} as power series in $z^{\pm 1}$ whose coefficients $\SA_{i,k}^{\pm}$ belong to $\CU_{\mu}^{\mathrm{ad}}(\Gaff)$.

The following definition was made in \cite[Definition 8.12]{FT}. Recall, for $i \in I$, the dominant coefficient $\Bp_{i,n_i}$ and the constant term $\Bp_{i,0}$ of the polynomial $\Bp_i(z)$.


\begin{defi}\cite{FT}   \label{defi: adjoint truncation}
    Assume the pair $(\mu, \Bp)$ to be truncatable as in Definition \ref{defi: truncation} and let $\mathbf{z} = (\mathbf{z}_i)_{i\in I} \in \mathfrak{p}_0$ be an $I$-tuple of nonzero complex numbers such that
    \begin{equation}  \label{truncation: new parameter}
        \prod_{j\in I} \mathbf{z}_j^{c_{ji}} =  \Bp_{i,0}^{-1} \Bp_{i,n_i} \quad \mathrm{for}\ i \in I. 
    \end{equation}
 The {\it adjoint truncated shifted quantum affine algebra} $\CU_{\mu}^{\Bp,\mathbf{z}}(\Gaff)$ associated to this triple $(\mu, \Bp, \mathbf{z})$, or {\it adjoint truncation} for short, is the quotient algebra of $\CU_{\mu}^{\mathrm{ad}}(\Gaff)$ by Eq.\eqref{truncation: poly} and the following relations for $i \in I$:
    \begin{gather}
        \psi_i^+\psi_i^- =  (-q_i)^{-t_i}\mathbf{z}_i, \quad \SA_{i,t_i}^+ = (-1)^{t_i} (\psi_i^+)^2.  \label{truncation: adjoint}
    \end{gather}
\end{defi}
\begin{rem}  \label{rem: truncation}
    (i) The A-series in \cite[(10.21)]{H} used to define the truncated shifted quantum affine algebra are related to our series as follows:
    $$ A_i^+(z) := (\psi_i^+)^{-1} \SA_i^+(z), \quad A_i^-(z) := \mathbf{z}_i (\psi_i^-)^{-1} \SA_i^-(z). $$
    Our definition above is a translation of \cite[Definition 10.3]{H} in terms of the $\SA_i^{\pm}(z)$. 

    (ii) Comparing Definitions \ref{defi: truncation} and \ref{defi: adjoint truncation} we observe that the natural algebra homomorphism $\CU_{\mu}(\Gaff) \longrightarrow \CU_{\mu}^{\mathrm{ad}}(\Gaff)$ descends to an algebra homomorphism $\CU_{\mu}^{\Bp}(\Gaff) \longrightarrow \CU_{\mu}^{\Bp,\mathbf{z}}(\Gaff)$ from the simply-connected truncation to the adjoint truncation. The latter is not injective as it factorizes further through the intermediate truncation $\overline{\CU}_{\mu}^{\Bp}(\Gaff) \longrightarrow \CU_{\mu}^{\Bp,\mathbf{z}}(\Gaff)$. In summary, we have the following commutative diagram of algebra homomorphisms
    \begin{gather*} 
 \xymatrixcolsep{6pc} \xymatrix{
\CU_{\mu}(\Gaff) \ar[d] \ar[r] & \CU_{\mu}^{\Bp}(\Gaff) \ar[d]\ar[r] & \overline{\CU}_{\mu}^{\Bp}(\Gaff) \ar[dl]  \\
\CU_{\mu}^{\mathrm{ad}}(\Gaff) \ar[r]          & \CU_{\mu}^{\Bp,\mathbf{z}}(\Gaff) }  
\end{gather*}

(iii) Simply-connected truncations are compatible with spectral parameter automorphisms. For $c \in \BC^{\times}$ there exists a unique algebra automorphism $\tau_c$ of $\CU_{\mu}(\Gaff)$ which maps any generating series $f(z)$ to $f(cz)$. If $(\mu, \Bp)$ is truncatable, then $\tau_c$ descends to an algebra isomorphism 
$$\tau_c: \CU_{\mu}^{\Bp}(\Gaff) \longrightarrow \CU_{\mu}^{\mathbf{b}}(\Gaff) $$
where $\mathbf{b} \in \mathfrak{p}$ is the $I$-tuple $(\Bp_i(cz))_{i\in I}$. 
Neither the adjoint truncations nor the intermediate truncation  have this compatibility.
\end{rem}

One main advantage of adjoint truncations over simply connected truncations is the following Jordan--H\"older property.
\begin{theorem}\cite{H}  \label{thm: JH}
    Let $(\mu, \Bp, \mathbf{z})$ be as in Definition \ref{defi: adjoint truncation} and let $V$ be a $\CU_{\mu}(\Gaff)$-module in category $\mathcal{O}_{\mu}$. If the module structure can be extended to the adjoint truncation $\CU_{\mu}^{\Bp,\mathbf{z}}(\Gaff)$, then the $\CU_{\mu}(\Gaff)$-module $V$ is of finite representation length.
\end{theorem}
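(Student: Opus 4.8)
The plan is to reduce the statement to the finiteness of the set of simple objects of $\mathcal{O}_\mu$ that factor through the adjoint truncation, and then to convert that finiteness into a length bound by means of an additive invariant on $\mathcal{O}_\mu$. I regard the finiteness of simples as the genuine content of the theorem; the passage from it to finite length is formal and internal to category $\mathcal{O}_\mu$.

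For that formal passage, let $\mathcal{C}\subset\mathcal{O}_\mu$ be the full subcategory of modules whose action extends to $\CU_\mu^{\Bp,\mathbf{z}}(\Gaff)$; it is stable under subquotients, since these still factor through the quotient algebra. Granting that up to isomorphism only finitely many simple objects $L_\mu(\Bf_1),\dots,L_\mu(\Bf_N)$ lie in $\mathcal{C}$, I would let $S=\{\varpi(\Bf_1),\dots,\varpi(\Bf_N)\}\subset\mathfrak{t}^*$ be the finite set of their top weights and, for $V\in\mathcal{C}$, set
$$ n(V):=\sum_{\nu\in S}\dim_{\BC} V_\nu, $$
a non-negative integer by condition (ii) of Definition \ref{defi: category O}. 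The function $n$ is plainly additive on short exact sequences, and I claim $n(V)\geq 1$ whenever $V\neq 0$. Indeed, by condition (iii) of Definition \ref{defi: category O} the weights of $V$ lie in a finite union of cones $\lambda\overline{\BQ_-}$, so a maximal weight $\lambda$ with $V_\lambda\neq 0$ exists; as every $x_{i,m}^+$ raises weights out of this support, $x_i^+(z)V_\lambda=0$, and a common eigenvector $0\neq v\in V_\lambda$ of the commuting $\phi_{i,m}^+$ generates a highest-$\ell$-weight submodule whose simple quotient again lies in $\mathcal{C}$. Hence $\lambda=\varpi(\Bf_k)\in S$ for some $k$, giving $n(V)\geq\dim V_\lambda\geq 1$. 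Therefore any strictly increasing chain of submodules of $V$ has length at most $n(V)$, so $V$ has finite length bounded by $n(V)<\infty$.

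The crux is thus to bound the number of simples in $\mathcal{C}$, and here I would work on the one-dimensional top weight space of a simple $L_\mu(\Bf)\in\mathcal{C}$. By Corollary \ref{cor: truncation criterion} each series $\SA_i^+(z)$ acts there as a polynomial $\SA_i(z)$ of degree $t_i$ with invertible leading term; relation \eqref{truncation: adjoint} identifies this leading term with $(-1)^{t_i}(\psi_i^+)^2$, while \eqref{truncation: poly} forces the companion series to be the reciprocal $\SA_i^-(z)=(\SA_{i,t_i}^+)^{-1}z^{-t_i}\SA_i^+(z)$. Combined with the central constraint $\psi_i^+\psi_i^-=(-q_i)^{-t_i}\mathbf{z}_i$ and the factorisation $\phi_{i,0}^+=\prod_{j}(\psi_j^+)^{c_{ji}}$ of \eqref{rel: psi}, these relations determine the top weight up to a finite sign ambiguity and, through \eqref{rel: GKLO+}, express $\Bf$ entirely in terms of the roots of the polynomials $\SA_i(z)$ and the fixed data $(\Bp,\mathbf{z})$. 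The reciprocal identity for $\SA_i^-(z)$ simultaneously controls the expansion at $z=\infty$, which I would exploit to show that each such simple admits a lowest $\ell$-weight and is in fact finite-dimensional.

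The main obstacle is that a priori the roots of the $\SA_i(z)$ are continuous parameters, so one must prove that only finitely many root configurations are realised by an \emph{actual} simple module rather than by a merely formal $\ell$-weight. I expect this rigidity to come from imposing that the lowering operators $x_{i,m}^-$ respect the Serre relations \eqref{Drinfeld rel: Serre} and the shift relations \eqref{Drinfeld rel: shift}: for fixed $(\Bp,\mathbf{z})$ these should translate into polynomial equations on the admissible roots, cutting the parameter space down to a finite set and matching the expectation that the underlying $q$-character is truncated to bounded degree. This is exactly the point where the geometric description of simples via quantized $K$-theoretic Coulomb branches \cite{FT,H} enters; in the parallel Yangian setting \cite{HZ} the analogous finiteness was instead obtained from the shifted coproduct of \cite{coproduct}, which is precisely why a separate argument is required here.
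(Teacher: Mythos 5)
Your reduction of the theorem to the finiteness of the set of simples, via the additive invariant $n(V)$ built from their finitely many top weights, is indeed the ``standard category $\mathcal{O}$ argument'' the paper invokes (citing \cite[Chapter 9]{Kac}). But there is a genuine gap at the very first step, and it is exactly the subtlety the paper's proof is designed to handle. You claim the category $\mathcal{C}$ of modules extendable to $\CU_{\mu}^{\Bp,\mathbf{z}}(\Gaff)$ is ``stable under subquotients, since these still factor through the quotient algebra.'' This justification is false: the adjoint truncation is a quotient of the \emph{adjoint} algebra $\CU_{\mu}^{\mathrm{ad}}(\Gaff)$, not of $\CU_{\mu}(\Gaff)$. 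It contains the extra generators $\psi_i^{\pm}$, which by \eqref{truncation: adjoint} are square roots of the operators $(-1)^{t_i}\SA_{i,t_i}^+$ coming from $\CU_{\mu}(\Gaff)$, and a $\CU_{\mu}(\Gaff)$-submodule of $V$ need not be stable under them. For instance, for $\Glie=\mathfrak{sl}_2$ take $V=L\oplus L'$, where $L'$ is $L$ with $\psi_1^{\pm}$ replaced by $-\psi_1^{\pm}$; since $c_{11}=2$, the relations \eqref{rel: psi}, \eqref{rel: psi weight}, \eqref{truncation: adjoint} still hold, so $L'$ is again an adjoint-truncation module, equal to $L$ as a $\CU_{\mu}(\Gaff)$-module. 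The diagonal copy of $L$ inside $V$ is then a $\CU_{\mu}(\Gaff)$-submodule not preserved by the (invertible) operator $\psi_1^+$. Both your closure claim and your step showing $n(W)\geq 1$ (which needs the simple quotient of a highest $\ell$-weight submodule to lie again in $\mathcal{C}$) rest on this flawed point. The paper's proof avoids it by quoting the \emph{second} clause of \cite[Theorem 11.15]{H}: simple $\CU_{\mu}^{\Bp,\mathbf{z}}(\Gaff)$-modules in $\mathcal{O}_{\mu}$ remain simple as $\CU_{\mu}(\Gaff)$-modules. The correct order of argument is: run your chain-bounding argument for chains of \emph{adjoint-truncation} submodules (that category genuinely is closed under subquotients, and your invariant $n$ applies verbatim there), concluding finite length of $V$ over $\CU_{\mu}^{\Bp,\mathbf{z}}(\Gaff)$; then the resulting composition series has factors simple over the adjoint truncation, hence simple over $\CU_{\mu}(\Gaff)$ by the cited result, so it is already a $\CU_{\mu}(\Gaff)$-composition series.

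The second issue is that the crux---finitely many simples---is never proved: your last two paragraphs are explicitly speculative (``I expect\ldots'', ``should translate into\ldots''), and they contain a false expectation, namely that simples over the adjoint truncation admit a lowest $\ell$-weight and are finite-dimensional. Negative prefundamental representations such as $L_{-\varpi_i^{\vee}}(\Psi_{i,a}^{-1})$ are infinite-dimensional and do descend to truncations (cf.\ Example \ref{exneg} and the $\mathfrak{sl}_2$ example in Section \ref{invdom}), so no such finite-dimensionality can be the source of the finiteness. Since the theorem is stated as a result of \cite{H}, it is legitimate to quote the finiteness (it is precisely \cite[Theorem 11.15]{H}, a substantial result proved there) rather than reprove it; what cannot be omitted, and what your proposal is missing, is the descent-of-simplicity statement above.
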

\begin{proof}  
      By \cite[Theorem 11.15]{H} simple $\CU_{\mu}^{\Bp,\mathbf{z}}$-modules in category $\mathcal{O}_{\mu}$ remain simple as $\CU_{\mu}(\Gaff)$-modules and there are finitely many of them. Apply the standard arguments of category $\mathcal{O}$ as in \cite[Chapter 9]{Kac} to conclude.
\end{proof}
The simply-connected truncations do not satisfy the Jordan--H\"older property. As a counterexample, the following infinite direct sum of invertible $\CU_0(\Gaff)$-modules
$$ L_0(\overline{\alpha_1}^{-1}) \oplus L_0(\overline{\alpha_1}^{-2}) \oplus L_0(\overline{\alpha_1}^{-3}) \oplus \cdots  $$
is in category $\mathcal{O}_0$, factorizes through $\CU_0^1(\Gaff)$, but is of infinite representation length. Later we will see that intermediate truncations satisfy the Jordan--H\"older property.
\subsection{From intermediate truncations to adjoint truncations}

Each module over an adjoint truncation is necessarily a module over an intermediate truncation by restriction. Conversely, we explain in this subsection how to extend a module over an intermediate truncation to an adjoint truncation, under an additional condition.

\begin{defi}  \label{defi: balance}
    Let $(\mu, \Bp)$ be a truncatable pair as in Definition \ref{defi: truncation}.
    Call a module $V$ over the intermediate truncation $\overline{\CU}_{\mu}^{\Bp}(\Gaff)$ {\it balanced} if
    \begin{itemize}
        \item[(i)] it is top graded with top weight $\lambda \in \mathfrak{t}^*$;
        \item[(ii)] for $i \in I$, there exists $u_i \in \BC^{\times}$ such that for $\beta \in \BQ_+$, the operator $\SA_{i,t_i}^+$ acts on the weight space $V_{\lambda \overline{\beta}^{-1}}$ as the scalar $u_i q_i^{-2\langle \varpi_i^{\vee},\beta\rangle}$.
    \end{itemize}
\end{defi}
The above balancing condition (ii) can be thought of as a characterization of the weight spaces in terms of the operators $\SA_{i,t_i}^+$. It can be proved directly for highest $\ell$-weight $\overline{\CU}_{\mu}^{\Bp}(\Gaff)$-modules in category $\mathcal{O}_{\mu}$ based on the commutation relations between $\SA_{i,t_i}^+$ and $x_{j,n}^{\pm}$ in \cite[\S 6]{FT}. We will prove a more general statement later.
\begin{prop}   \label{prop: balace}
    Let $(\mu, \Bp, \mathbf{z})$ be a triple as in Definition \ref{defi: adjoint truncation} and let $V$ be a balaced module over the intermediate truncation $\overline{\CU}_{\mu}^{\Bp}(\Gaff)$. Then, up to a tensor product {\color{red} by} a sign module, the module structure can be extended to the adjoint truncation $\CU_{\mu}^{\Bp,\mathbf{z}}(\Gaff)$.
\end{prop}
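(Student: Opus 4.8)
The plan is to construct the missing generators $\psi_i^{\pm}$ directly as operators diagonal on the weight spaces of $V$ and then to verify, one at a time, the defining relations \eqref{rel: psi}--\eqref{rel: psi weight} of $\CU_{\mu}^{\mathrm{ad}}(\Gaff)$ together with the adjoint truncation relations \eqref{truncation: adjoint}. Write the weights of $V$ as $\lambda\overline{\beta}^{-1}$ with $\beta\in\BQ_+$ and top weight $\lambda$. For each $i$ I would pick a square root $c_i\in\BC^{\times}$ of $(-1)^{t_i}u_i$, where $u_i$ is the scalar provided by balancing, and let $\psi_i^+$ act on $V_{\lambda\overline{\beta}^{-1}}$ by $c_iq_i^{-\langle\varpi_i^{\vee},\beta\rangle}$; the exponent $\langle\varpi_i^{\vee},\beta\rangle$ is an integer, so this is well defined. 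Then $(\psi_i^+)^2$ acts by $(-1)^{t_i}u_iq_i^{-2\langle\varpi_i^{\vee},\beta\rangle}=(-1)^{t_i}\SA_{i,t_i}^+$ by balancing, which is the second relation of \eqref{truncation: adjoint}; and since $x_{j,m}^{\pm}$ shifts $\beta$ by $\mp\alpha_j$ while $\langle\varpi_i^{\vee},\alpha_j\rangle=\delta_{ij}$, the prescribed $\beta$-dependence immediately gives $\psi_i^+x_{j,m}^{\pm}=q_i^{\pm\delta_{ij}}x_{j,m}^{\pm}\psi_i^+$ of \eqref{rel: psi weight}. Finally I set $\psi_i^-:=(-q_i)^{-t_i}\mathbf{z}_i(\psi_i^+)^{-1}$, so that $\psi_i^+\psi_i^-$ acts by the single scalar $(-q_i)^{-t_i}\mathbf{z}_i$ on all of $V$, yielding centrality, invertibility, and the first relation of \eqref{truncation: adjoint}.

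The substance is the remaining pair in \eqref{rel: psi}, namely $\phi_{i,0}^+=\prod_j(\psi_j^+)^{c_{ji}}$ and $\phi_{i,m_i}^-=\prod_j(\psi_j^-)^{c_{ji}}$. Evaluating both sides on $V_{\lambda\overline{\beta}^{-1}}$ and using $\prod_j q_j^{-c_{ji}\langle\varpi_j^{\vee},\beta\rangle}=q^{-(\beta,\alpha_i)}$, which is exactly the weight of $\phi_{i,0}^+$ there, the full $\beta$-dependence cancels on both sides and one is left with a scalar identity at the top weight: $\prod_j c_j^{c_{ji}}=\lambda(i)$ for the first relation, and an equivalent statement for the second after inserting \eqref{truncation: new parameter}. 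Squaring reduces both to $\prod_j((-1)^{t_j}u_j)^{c_{ji}}=\lambda(i)^2$. I would establish this squared identity by combining the balancing description of $\SA_{i,t_i}^+$ with the truncation relations: relation \eqref{truncation: simply connected} expresses the central scalar $\phi_{i,0}^+\phi_{i,m_i}^-$ in terms of $\lambda(i)$ and the $u_j$, while \eqref{truncation: intermediate} together with the parameter condition \eqref{truncation: new parameter} should pin its value to $\prod_j(-q_j)^{-t_jc_{ji}}\Bp_{i,0}^{-1}\Bp_{i,n_i}$; comparing the two and using the truncatability system $\sum_j c_{ji}t_j=n_i-m_i$ to cancel the powers of $q$ then gives the identity.

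I expect the real difficulty to be concentrated in this comparison and in controlling the signs. The squared identity pins down $\prod_j c_j^{c_{ji}}$ only up to a sign $\eta_i\in\{\pm1\}$, and the achievable sign patterns are governed by the Cartan matrix modulo $2$, so no choice of individual square roots $c_j$ can in general make every $\eta_i$ trivial. This is precisely where a sign module enters: a one-dimensional representation on which the $\psi_i^{\pm}$ act by $\pm1$ (hence $(\psi_i^+)^2$, and with it every $\SA_{i,t_i}^+$, acts trivially) and the $x_{j,m}^{\pm}$ act by zero; tensoring $V$ with such a module rescales each generator $\psi_i^+$ by the corresponding sign. The plan is therefore to absorb the residual sign ambiguity into a sign-module twist, after which the two relations of \eqref{rel: psi} hold exactly, the relations \eqref{truncation: poly} are inherited from the intermediate truncation, and $V$ (twisted by the sign module) acquires the action of $\CU_{\mu}^{\Bp,\mathbf{z}}(\Gaff)$. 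The main obstacle is thus the exact evaluation of the central scalar $\phi_{i,0}^+\phi_{i,m_i}^-$ via \eqref{truncation: intermediate}, where it is the coefficient $\phi_{i,0}^-$ rather than the leading term $\phi_{i,m_i}^-$ that appears, so that the rational-function description of the truncated Cartan series must be invoked before the sign bookkeeping can be settled.
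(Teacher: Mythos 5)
Your proposal is correct and follows essentially the same route as the paper's proof: you define $\psi_i^+$ as the diagonal operator $c_i q_i^{-\langle\varpi_i^{\vee},\beta\rangle}$ for a square root $c_i$ of $(-1)^{t_i}u_i$ (the paper's $\theta_i$ with $\kappa_i=c_i$), set $\psi_i^-=\mathbf{z}_i(-q_i)^{-t_i}(\psi_i^+)^{-1}$, reduce the two Cartan relations of \eqref{rel: psi} to a top-weight scalar identity whose square is obtained by combining \eqref{truncation: simply connected} with \eqref{truncation: intermediate} (the paper's Eq.\eqref{square}), and absorb the residual sign into a one-dimensional sign-module twist, exactly as the paper does (it merely performs the twist before, rather than after, the verification). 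The one ``obstacle'' you flag is not a real one: the occurrence of $\phi_{i,0}^-$ in \eqref{truncation: intermediate} is a notational slip for the leading coefficient $\phi_{i,m_i}^-$ (as confirmed by the paper's own $\mathfrak{sl}_2$ example, where $m_1=-1$ would force $\phi_{1,0}^-=0$ and make the relation vacuous, yet the stated criterion is $b^2=a^2$), so under the intended reading no rational-function analysis is needed and your sign bookkeeping closes exactly as in the paper.
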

\begin{proof}
    Combining Eqs.\eqref{truncation: simply connected} and \eqref{truncation: intermediate} we have 
    \begin{equation}  \label{square}
         (\phi_{i,0}^+)^2 = \prod_{j\in I} ((-1)^{t_i} \SA_{i,t_i}^+)^{c_{ji}} \in \overline{\CU}_{\mu}^{\Bp}(\Gaff).
    \end{equation}
    Let $\lambda = (\lambda_i)_{i\in I}$ be the top weight of $V$ and let $(u_i)_{i\in I}$ be as in Definition \ref{defi: balance}. 
    Choose a square root $\kappa_i \in \BC^{\times}$ of $(-1)^{t_i}u_i$ for each $i \in I$. The above relation implies
    \begin{equation}  \label{initial}
        \lambda_i = \varepsilon_i \prod_{j\in I} \kappa_j^{c_{ji}} \quad \textrm{with $\varepsilon_i = \pm 1$}.
    \end{equation}
    By Eq.\eqref{rel: tensor product one-dim}, tensor product with the one-dimensional sign module $L_0((\varepsilon_i)_{i\in I})$ does not affect the action of the $\SA_i^{\pm}(z)$ and the module remains balanced. Without loss of generality we may assume $\varepsilon_i = 1$ for all $i \in I$.

    For $i \in I$, define $\theta_i \in \mathrm{GL}(V)$ by letting its action on each weight space $V_{\lambda\overline{\beta}^{-1}}$ for $\beta \in \BQ_+$ to be the nonzero scalar $\kappa_i q_i^{-\langle \varpi_i^{\vee},\beta\rangle}$. We claim that the following relations hold  in the $\CU_{\mu}^{\Bp}(\Gaff)$-module $V$ for $i \in I$:
\begin{equation}  \label{equ: auxiliary}
    \SA_{i,t_i}^+ = (-1)^{t_i} \theta_i^2, \qquad \phi_{i,0}^+ = \prod_{j\in I} \theta_j^{c_{ji}}.
\end{equation}
The first relation is a direct consequence of Definition \ref{defi: balance}(ii). For the second relation, by Eq.\eqref{initial} with $\varepsilon_i = 1$ and by definition of weight spaces, both sides act on any weight space as $\lambda_i$ times integer powers of $q$. As $q$ is not a root of unity, it suffices to show that their squares coincide, which is exactly Eq.\eqref{square}.

We show that the $\CU_{\mu}(\Gaff)$-module structure on  $V$ can be extended to the adjoint truncation $\CU_{\mu}^{\Bp,\mathbf{z}}(\Gaff)$ by setting
$$ \psi_i^+ \mapsto \theta_i,\quad \psi_i^- \mapsto \mathbf{z}_i (-q_i)^{-t_i} \theta_i^{-1}. $$
It suffices to prove the relations \eqref{rel: psi}, \eqref{rel: psi weight} and \eqref{truncation: adjoint}. The first half of \eqref{truncation: adjoint} follows from definition, and the second half of \eqref{truncation: adjoint} and the first two relations of \eqref{rel: psi} follow directly from Eq.\eqref{equ: auxiliary}. Since the $\theta_i$ are defined using the weight grading, they mutually commute and by our choice of $\kappa_i$ we get Eq.\eqref{rel: psi weight}. The remaining third relation of \eqref{rel: psi}
follows from Eqs.\eqref{truncation: intermediate} and \eqref{truncation: new parameter}.
\end{proof}


\section{Truncation series from R-matrices}\label{TruncRmat}  
In this section we reconstruct the A-series $\BA_i^+(z)$ of Eq.\eqref{def: A} from suitable evaluations of the universal R-matrix of the quantum affine algebra. Then we deduce from a previous work (Theorem \ref{thm: poly R Borel} established in \cite{Z}) the polynomiality of A-series acting on a tensor product of several irreducible modules in category $\mathcal{O}$ for the Borel subalgebra (Theorem \ref{thm: polynomiality A series}).

\subsection{The Universal R-matrix}

The quantum affine algebra $\qaf$ is quasi-triangular as a Hopf algebra. 
We shall need the reduced part of the universal R-matrix. Recall the inverse $(\widetilde{B}_{ij}(q))_{i,j\in I}$ of the symmetric quantum Cartan matrix $([b_{ij}]_q)_{i,j\in I}$ and the Drinfeld--Cartan elements $h_{i,s}$ from Eq.\eqref{def: h} viewed in the quantum affine algebra. The {\it abelian part} of the universal R-matrix is a power series in $z$ with coefficients in $\Borel \otimes \qaf$ defined by \cite{Damiani} 
\begin{equation}
     \CR_0(z) := \exp \left(\sum_{j,k\in I}\sum_{s>0} \frac{ s(q^{-1}-q)\widetilde{B}_{jk}(q^s)}{[s]_q} h_{j,s} \otimes h_{k,-s} z^s \right). \label{def: R0} 
\end{equation}
The triangular parts $\CR_{\pm}(z)$ are sums, over the positive root cone $\BQ_+$, of power series $\CR_{\pm}^{\beta}(z)$ in the form
\begin{equation}  
       \CR_{\pm}^0(z) = 1 \otimes 1,\quad \CR_{\pm}^{\beta}(z) \in (\Borel_{\pm \beta} \otimes \qaf_{\mp \beta})[[z]]. \label{def: R+-}
\end{equation}
The {\it reduced part} of the universal R-matrix is given by the factorization \cite{Damiani}:
\begin{equation*}
\barR(z) := \CR_+(z) \CR_0(z) \CR_-(z). 
\end{equation*}
It is a power series in $z$ with coefficients in a suitable completion of the tensor product algebra $\Borel \otimes \qaf$. At the level of representations the completion is inessential: given an arbitrary $\Borel$-module $V$ and a finite-dimensional $\qaf$-module $W$, we can evaluate $\barR(z)$ at $V \otimes W$ to get a linear operator \cite{FR1}
$$ \barR_{V,W}(z): V \otimes W \longrightarrow (V \otimes W)[[z]]. $$
As a particular example, we take $W$ to be a fundamental representation defined before Eq.\eqref{equ: fundamental rep}.
The following result is proved implicitly in \cite[\S 3.3]{FR1}. See also \cite[Lemma 2.6]{FM} for a closer argument. We view the positive series  $\BA_i^+(z)$ of Eq.\eqref{def: A} as a power series in $z$ with coefficients in $\Borel$ so that it acts on the $\Borel$-module $V$.
\begin{prop}  \label{prop: A series from R}
    Let $i \in I$ and $V$ be a  $\Borel$-module. In the fundamental module $W^{(i)}$ fix $\omega$ to be a nonzero vector of top weight $\varpi_i$. Then for $v \in V$, we have 
    $$ \barR_{V, W^{(i)}}(z) (v \otimes \omega) \equiv \BA_i^+(z)v  \otimes \omega \ \mathrm{mod}.\ \left(\sum_{\beta \in \BQ_+\setminus\{0\}} V \otimes W^{(i)}_{\varpi_i\overline{\beta}^{-1}}\right)[[z]]. $$
\end{prop}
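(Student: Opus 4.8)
The plan is to use the triangular factorization $\barR(z) = \CR_+(z)\CR_0(z)\CR_-(z)$ and to evaluate the three factors on $v\otimes\omega$ from right to left, exploiting at each step that $\omega$ spans the top weight space $W^{(i)}_{\varpi_i}$ of the fundamental module. The guiding principle is that a top weight vector is annihilated by the raising operators occurring in $\CR_-(z)$ and is a joint eigenvector for the Drinfeld--Cartan elements occurring in $\CR_0(z)$, while the lowering operators in $\CR_+(z)$ only produce strictly lower weight vectors, which the asserted congruence discards.

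First I would apply $\CR_-(z) = \sum_{\beta\in\BQ_+}\CR_-^\beta(z)$. By Eq.\eqref{def: R+-} the second tensor factor of $\CR_-^\beta(z)$ lies in $\qaf_{\beta}$, so by the weight grading it sends $\omega \in W^{(i)}_{\varpi_i}$ into $W^{(i)}_{\varpi_i\overline{\beta}}$. For $\beta\neq 0$ this weight sits strictly above the top weight and the corresponding space is zero, so only the term $\CR_-^0(z) = 1\otimes 1$ survives and $\CR_-(z)(v\otimes\omega) = v\otimes\omega$.

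Next I would apply the abelian part $\CR_0(z)$ of Eq.\eqref{def: R0}. Since $\omega$ is a joint eigenvector of the $h_{k,-s}$, I substitute the eigenvalues obtained from Eq.\eqref{equ: fundamental rep} applied to $W^{(i)}$, namely $h_{k,-s}\omega = \delta_{ki}\tfrac{1-q_i^{-2s}}{s(q-q^{-1})}\omega$ for $s>0$. Retaining only the surviving $k=i$ summand and simplifying with $\tfrac{q^{-1}-q}{q-q^{-1}} = -1$ and $[s]_q^{-1} = (q-q^{-1})/(q^s-q^{-s})$, the $s$ cancels and the operator acting on the left tensor factor becomes precisely the exponential defining $\BA_i^+(z)$ in Eq.\eqref{def: A}, whence $\CR_0(z)(v\otimes\omega) = \BA_i^+(z)v\otimes\omega$. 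This sign-and-normalization bookkeeping is the only genuine computation in the argument and the step I expect to demand the most care, since the result must match the normalization of $\BA_i^+(z)$ exactly.

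Finally I would apply $\CR_+(z) = \sum_{\beta\in\BQ_+}\CR_+^\beta(z)$ to $\BA_i^+(z)v\otimes\omega$. The $\beta=0$ term is the identity and reproduces $\BA_i^+(z)v\otimes\omega$, while for $\beta\neq 0$ the second factor lies in $\qaf_{-\beta}$ and maps $\omega$ into $W^{(i)}_{\varpi_i\overline{\beta}^{-1}}$; these contributions lie in $\big(\sum_{\beta\in\BQ_+\setminus\{0\}} V\otimes W^{(i)}_{\varpi_i\overline{\beta}^{-1}}\big)[[z]]$, which is exactly what the congruence ignores. Composing the three steps yields the claim. The main conceptual point, rather than a true obstacle, is to keep the weight conventions straight so that $\qaf_\beta$ raises and $\qaf_{-\beta}$ lowers the $W^{(i)}$-weight; once this is pinned down, the top weight vector forces each of the three factors into its stated form.
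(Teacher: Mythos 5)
Your proposal is correct and follows essentially the same route as the paper's proof: kill the $\beta\neq 0$ terms of $\CR_-(z)$ using that $\omega$ spans the top weight space, identify the action of $\CR_0(z)$ on $v\otimes\omega$ with $\BA_i^+(z)\otimes 1$ by substituting the eigenvalues from Eq.\eqref{equ: fundamental rep} into Eq.\eqref{def: R0}, and observe that the $\beta\neq 0$ terms of $\CR_+(z)$ land in the subspace discarded by the congruence. Your write-up simply makes explicit the normalization check and the weight bookkeeping that the paper's three-line proof leaves to the reader.
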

\begin{proof}
    From the weight decomposition of Eq.\eqref{def: R+-} only the term $\beta = 0$ contributes in $\CR_-(z)(v\otimes \omega)$. It suffices to prove $\CR_0(z)(v\otimes \omega) = \BA_i^+(z)v \otimes \omega$. Combining Eq.\eqref{equ: fundamental rep} with \eqref{def: R0} recovers \eqref{def: A}.
\end{proof}

\subsection{Polynomiality of R-matrices}

Recall from Definition \ref{defi: prefund} the prefundamental $\ell$-weights $\Psi_{i,a}$ for $i \in I$ and $a \in \BC^{\times}$. Each polynomial $\ell$-weight $\Bf \in \mathfrak{p}$ factorizes uniquely as the product of a constant $\ell$-weight and a monomial in the prefundamental $\ell$-weights:
$$\Bf = \lambda \Psi_{i_1,a_1} \Psi_{i_2,a_2} \cdots \Psi_{i_n,a_n} \quad \mathrm{with}\ \lambda \in \mathfrak{p}_0.  $$
We associate to $\Bf$ its T-series as a generalization of Eq.\eqref{def: T}:
$$ T_{\Bf}(z) := \BT_{i_1}^-(z^{-1}a_1^{-1}) \BT_{i_2}^-(z^{-1}a_2^{-1}) \cdots \BT_{i_n}^-(z^{-1}a_n^{-1}) \in \qaf[[z]]. $$
For $i \in I$, let $t_{\Bf,i}^+(z)$ (resp. $t_{\Bf,i}^-(z)$) denote the eigenvalue of $T_{\Bf}(z)$ acting on the top weight space (resp. the bottom weight space) of the $i$th fundamental representation $W^{(i)}$ of $\qaf$. Both are invertible power series in $z$ of constant term 1.
The following result is \cite[Theorem 11.4]{Z} applied to $W^{(i)}$. 
 \begin{theorem}\cite{Z}  \label{thm: poly R Borel}
 Let $s \geq 1$ and $\Bm_k, \Bn_k \in \mathfrak{p}$ be polynomial $\ell$-weights for $1\leq k \leq s$. Set $V$ to be the tensor product of $\Borel$-modules
 $$ V := L\left(\frac{\Bm_1}{\Bn_1}\right) \otimes L\left(\frac{\Bm_2}{\Bn_2}\right) \otimes \cdots \otimes L\left(\frac{\Bm_s}{\Bn_s}\right). $$ 
For $i \in I$ define the power series
 \begin{align*}
 \alpha_{V,i}(z) :=  \frac{t_{\Bn_1,i}^-(zq^{2r^{\vee}h^{\vee}})}{t_{\Bm_1,i}^+(z)}\frac{t_{\Bn_2,i}^-(zq^{2r^{\vee}h^{\vee}})}{t_{\Bm_2,i}^+(z)} \cdots \frac{t_{\Bn_s,i}^-(zq^{2r^{\vee}h^{\vee}})}{t_{\Bm_s,i}^+(z)} \in \BC[[z]].
 \end{align*}
 Then the linear operator $\alpha_{V,i}(z) \barR_{V,W^{(i)}}(z)$ sends $V \otimes W^{(i)}$ to $V \otimes W^{(i)}[z]$. 
 \end{theorem}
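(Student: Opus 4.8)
The plan is to deduce the statement from the factorization of the reduced R-matrix $\barR(z) = \CR_+(z)\CR_0(z)\CR_-(z)$ together with the polynomiality of the coproduct of the T-series (Theorem \ref{thm: corpoduct T}), after first reducing to a single tensor factor. For the reduction I would use the quasi-triangularity identity $(\Delta \otimes \mathrm{id})(\barR(z)) = \barR(z)_{13}\,\barR(z)_{23}$, which upon evaluation on $(V' \otimes V'')\otimes W^{(i)}$ expresses $\barR_{V'\otimes V'', W^{(i)}}(z)$ as the composite of the $13$- and $23$-components built from $\barR_{V', W^{(i)}}(z)$ and $\barR_{V'', W^{(i)}}(z)$; the grouplike Cartan corrections distinguishing $\barR$ from the full universal R-matrix act by scalars on weight spaces and hence do not affect polynomiality in $z$. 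Since $\alpha_{V,i}(z)$ is by construction the product $\prod_{k=1}^s \alpha_{V_k,i}(z)$ of the scalars attached to the individual factors $V_k := L(\Bm_k/\Bn_k)$, the normalized operator $\alpha_{V,i}(z)\barR_{V,W^{(i)}}(z)$ factors as a product of the individually normalized operators. As a product of polynomial-valued operators is polynomial, the general case follows by induction from $s=1$.

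For $s=1$, write $V = L(\Bm/\Bn)$ with top weight vector $v_+$. The scalar $\alpha_{V,i}(z)$ is engineered to normalize $\barR$ on the extreme weight line $v_+ \otimes \omega$, where $\omega$ spans the top weight space of $W^{(i)}$: by the weight grading \eqref{def: R+-} both triangular parts act as the identity there (raising $V$ kills $v_+$, raising $W^{(i)}$ kills $\omega$), while the abelian part contributes the scalar eigenvalue of $\BA_i^+(z)$ on $v_+$ as in Proposition \ref{prop: A series from R}. Using \eqref{equ: fundamental rep} and the definition of the T-series eigenvalues $t^{\pm}_{\bullet,i}$, one identifies this scalar with $\alpha_{V,i}(z)^{-1}$; here the roles of $\Bm$ and $\Bn$ reflect the numerator and denominator of the highest $\ell$-weight $\Bm/\Bn$, and the spectral shift $q^{2r^{\vee}h^{\vee}}$ attached to $t^-_{\Bn,i}$ records the gap between the top weight $\varpi_i$ and the bottom weight $\varpi_{\overline{i}}^{-1}$ of $W^{(i)}$. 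Thus $\alpha_{V,i}(z)\barR_{V,W^{(i)}}(z)$ fixes $v_+ \otimes \omega$.

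It remains to propagate polynomiality from this line to all of $V \otimes W^{(i)}$. The mechanism is to express the action of $\barR(z)$ through the coproducts of the T-series $T_{\Bm}(z)$ and $T_{\Bn}(z)$ attached to the highest $\ell$-weight data of $V$. Factoring each constituent $\BT_j^{\pm}$ via \eqref{def: Theta}, the outer factors act on the auxiliary space $W^{(i)}$ and contribute exactly the scalar eigenvalues assembled into $\alpha_{V,i}(z)$, namely $t^+_{\Bm,i}(z)$ and the shifted $t^-_{\Bn,i}(zq^{2r^{\vee}h^{\vee}})$, while the inner factors are governed by the series $\Theta_j^-(z)$. By Theorem \ref{thm: corpoduct T} the matrix entries of each $\Theta_j^-(z)$ are, for $\beta \in \BQ_+$, polynomials in $z$ of degree at most $\langle \varpi_j^{\vee}, \beta\rangle$ with coefficients in $U_q^-(\Gaff)_{-\beta}\otimes U_q^+(\Gaff)_{\beta}$. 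Since $W^{(i)}$ is finite-dimensional, only finitely many weights occur in the auxiliary slot, so the sums over $\beta$ are finite and the degrees uniformly bounded; clearing the scalar denominators by $\alpha_{V,i}(z)$ then leaves a genuine polynomial in $z$, as claimed.

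The step I expect to be the main obstacle is this middle identification: matching, term by term, the scalar prefactors produced by the outer T-series in \eqref{def: Theta} against the precise combination $t^-_{\Bn,i}(zq^{2r^{\vee}h^{\vee}})/t^+_{\Bm,i}(z)$ defining $\alpha_{V,i}(z)$, and checking that the degree bounds of Theorem \ref{thm: corpoduct T} together with the explicit top and bottom eigenvalues \eqref{equ: fundamental rep} of $W^{(i)}$ are exactly strong enough to cancel all the transcendental power-series contributions. Getting the spectral shifts right and correctly handling the interplay between the positive half (the numerator $\Bm$ entering through $t^+$) and the negative half (the denominator $\Bn$ entering through the shifted $t^-$) is the delicate bookkeeping on which the whole argument rests.
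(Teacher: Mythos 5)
Your proposal attempts to reconstruct the content of the cited result rather than follow the paper, and it breaks in two places. The first is a concrete error. You assert that $\alpha_{V,i}(z)\barR_{V,W^{(i)}}(z)$ fixes $v_+\otimes\omega$, by identifying the eigenvalue of $\BA_i^+(z)$ on the highest-weight line with $\alpha_{V,i}(z)^{-1}$. Your weight argument is correct (both triangular parts do act trivially on $v_+\otimes\omega$, so $\barR_{V,W^{(i)}}(z)$ acts there by the $\BA_i^+(z)$-eigenvalue), but the identification of that eigenvalue is false: a direct computation from \eqref{def: A}, \eqref{def: p star} and Lemma \ref{lem: truncation poly} shows that for $V=L(\Bm/\Bn)$ the eigenvalue equals $t^+_{\Bm,i}(z)/t^+_{\Bn,i}(z)$, whereas $\alpha_{V,i}(z)^{-1}=t^+_{\Bm,i}(z)/t^-_{\Bn,i}(zq^{2r^{\vee}h^{\vee}})$. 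You have implicitly used $t^+_{\Bn,i}(z)=t^-_{\Bn,i}(zq^{2r^{\vee}h^{\vee}})$, which fails; the ratio of these two series is a nontrivial polynomial. For $\Glie=\mathfrak{sl}_2$, $\Bm=1$, $\Bn=\Psi_{1,a}$ one computes $t^-_{\Bn,1}(zq^{2r^{\vee}h^{\vee}})/t^+_{\Bn,1}(z)=1-za$, so the normalized operator acts on the top line by $1-za$, not by $1$; this is visible in the paper's own $\mathfrak{sl}_2$ example after Theorem \ref{thm: truncation shifted}, where $\SA_1^+(z)w_0=(1-za)w_0$ on the top vector $w_0$. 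This is not a minor slip: these nontrivial top-line (Baxter-type) polynomials are exactly why the normalization $\alpha_{V,i}$ yields polynomiality rather than triviality, and your claimed identity would make Theorem \ref{thm: polynomiality A series} degenerate on highest-weight lines.

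The second, more fundamental, problem is that the heart of your argument --- expressing the action of $\barR_{V,W^{(i)}}(z)$ through the coproducts of the T-series $T_{\Bm}(z)$, $T_{\Bn}(z)$, so that the outer factors of \eqref{def: Theta} produce the scalars in $\alpha_{V,i}(z)$ and the inner factors $\Theta_j^-(z)$ give polynomiality via Theorem \ref{thm: corpoduct T} --- is asserted but never established, and it is precisely the hard content of \cite[Theorem 11.4]{Z}. Nothing in the present paper (nor in your proposal) links the universal R-matrix evaluated on $L(\Bm/\Bn)\otimes W^{(i)}$ to the theta series; in \cite{Z} that bridge requires realizing the simple Borel modules through asymptotic limits and identifying T-series with limits of transfer matrices, and it does not follow formally from the factorization \eqref{def: Theta}. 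You flag this identification yourself as ``the main obstacle,'' which is accurate, but it means the proof is missing its central step; your outer reduction to $s=1$ via $(\Delta\otimes\mathrm{id})(\CR)=\CR_{13}\CR_{23}$ (with the Cartan corrections acting by $z$-independent scalars on weight components) is sound, but it only reduces to the unproved case. For contrast, the paper's own proof is essentially a citation: the case where all $\Bm_k,\Bn_k$ are monomials in prefundamental $\ell$-weights is exactly \cite[Theorem 11.4]{Z}, and the general case follows by splitting the constant parts $\lambda_k,\mu_k\in\mathfrak{p}_0$ into a one-dimensional tensor factor $D$ with $V\cong D\otimes V'$, twisting by which does not affect polynomiality --- a reduction your proposal does not address but which is the only part the paper actually proves.
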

 \begin{proof}
     In the case that the $\Bm_k$ and $\Bn_k$ are monomials of prefundamental $\ell$-weights this is \cite[Theorem 11.4]{Z}. In general one factorizes $\Bm_k = \lambda_k \Bm_k'$ and $\Bn_k = \mu_k \Bn_k'$ with $\lambda_k, \mu_k \in \mathfrak{p}_0$ constant and $\Bm_k', \Bn_k'$ monomials of the prefundamental $\ell$-weights. Define the tensor product module $V'$ in the similar way using the $\Bm_k'$ and $\Bn_k'$. Then $V \cong D \otimes V'$ as $\Borel$-modules where $D$ is one-dimensional of highest $\ell$-weight 
     $$\lambda_1\lambda_2 \cdots \lambda_s (\mu_1\mu_2\cdots \mu_s)^{-1} \in \mathfrak{p}_0. $$
     The polynomiality of $\barR_{V,W^{(i)}}(z)$ follows from that of $\barR_{V',W^{(i)}}(z)$.
 \end{proof}

\subsection{Local A-polynomiality}
 Recall from Eq.\eqref{def: p star} which assigns to a polynomial $\ell$-weight $\Bp \in \mathfrak{p}$ an $I$-tuple $\Bp^*$ of power series in $z$ with constant term 1. Recall also the involution $i\mapsto \overline{i}$ on $I$.

The following intermediate result allows a computation of eigenvalues of T-series acting on highest $\ell$-weight vectors in terms of the above map $\Bp \mapsto \Bp^*$.

\begin{lem}   \label{lem: truncation poly}
    Let $\Bf \in \mathfrak{p}$ be a polynomial $\ell$-weight factorized as follows:
    $$\Bf = \lambda \Psi_{i_1,a_1} \Psi_{i_2,a_2} \cdots \Psi_{i_n,a_n} \quad \mathrm{with}\ \lambda \in \mathfrak{p}_0.   $$
    Then as $I$-tuples of power series in $z$ we have 
    \begin{equation*}
        \left(\frac{1}{t_{\Bf,i}^+(z)}\right)_{i\in I} = \Bf^*,\quad (t_{\Bf,i}^-(z))_{i\in I} = (\Psi_{\overline{i_1},a_1q^{-r^{\vee}h^{\vee}}} \Psi_{\overline{i_2},a_2q^{-r^{\vee}h^{\vee}}} \cdots \Psi_{\overline{i_n},a_nq^{-r^{\vee}h^{\vee}}})^*.
    \end{equation*}
\end{lem}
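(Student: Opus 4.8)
The plan is to compute both eigenvalues $t_{\Bf,i}^{\pm}(z)$ by brute force, exploiting the fact that $T_{\Bf}(z)$ is an exponential in the mutually commuting Drinfeld--Cartan elements $h_{l,s}$, and that these act by explicit scalars on the top and bottom weight vectors $\omega_{\pm}$ of $W^{(i)}$ according to Eq.\eqref{equ: fundamental rep}. The whole statement then reduces to an identity of exponential generating series, which I would match term-by-term against the explicit form of the $\ast$-operation in Eq.\eqref{def: p star}. Note first that both sides of the lemma are independent of the constant factor $\lambda\in\mathfrak{p}_0$: the series $T_{\Bf}(z)$ only involves the prefundamental factors $\Psi_{i_k,a_k}$, and $\Bf^{\ast}$ depends only on the normalized Drinfeld polynomials $\Bf_i(z)\Bf_{i,0}^{-1}$.

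For the first identity I would substitute the top-weight action $h_{l,s}\omega_+=\delta_{li}\frac{q_i^{2s}-1}{s(q-q^{-1})}\omega_+$ into the definition \eqref{def: T} of each factor $\BT_{i_k}^-(z^{-1}a_k^{-1})$. The sum over $l$ collapses to $l=i$, the prefactor $(q-q^{-1})$ cancels, and the eigenvalue of the $k$th factor becomes $\exp\bigl(-\sum_{s<0}\widetilde{B}_{i,i_k}(q^s)\frac{q_i^{2s}-1}{s(q^s-q^{-s})}(z^{-1}a_k^{-1})^s\bigr)$. Taking the product over $k$ and performing the change of summation variable $s\mapsto-s$ converts this Laurent series in $z^{-1}$ into a power series in $z$; here I use that $[b_{ij}]_q$, hence $B(q)$ and its inverse, is invariant under $q\mapsto q^{-1}$, so $\widetilde{B}(q^{-s})=\widetilde{B}(q^s)$. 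On the other side, the logarithm of $\Bf_i(z)\Bf_{i,0}^{-1}=\prod_{k:i_k=i}(1-za_k)$ gives $\lambda_{i,s}=-\frac1s\sum_{k:i_k=i}a_k^s$ for $s>0$; feeding this into \eqref{def: p star} and merging $\sum_{j}\sum_{k:i_k=j}=\sum_k$ produces exactly the reciprocal of the series just obtained, yielding $1/t_{\Bf,i}^+(z)=\Bf_i^{\ast}(z)$.

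For the second identity I would repeat the computation with the bottom-weight action $h_{l,s}\omega_-=\delta_{l\overline{i}}\frac{q^{r^{\vee}h^{\vee}s}(1-q_l^{2s})}{s(q-q^{-1})}\omega_-$. Now the sum over $l$ collapses to $l=\overline{i}$, so relative to the top-weight case the eigenvalue carries the index $\overline{i}$ in place of $i$ and an extra factor $q^{r^{\vee}h^{\vee}s}$. After the substitution $s\mapsto-s$ this should coincide with the $\ast$-image of $\mathbf{g}:=\prod_k\Psi_{\overline{i_k},\,a_kq^{-r^{\vee}h^{\vee}}}$: the spectral shift $a_k\mapsto a_kq^{-r^{\vee}h^{\vee}}$ reproduces the factor $q^{r^{\vee}h^{\vee}s}$, and the node replacement $i_k\mapsto\overline{i_k}$ in $\mathbf{g}$ should account for the $\overline{i}$ appearing through the bottom weight of $W^{(i)}$.

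The two changes of variable and the cancellation of $(q-q^{-1})$ are routine; the one genuinely structural point, which I expect to be the main (if modest) obstacle and would isolate as a short remark, is the compatibility of the data $(q_i,\widetilde{B})$ with the Weyl involution $i\mapsto\overline{i}$. Concretely I need $q_i=q_{\overline{i}}$ and $\widetilde{B}_{\overline{i},\overline{j}}(q^s)=\widetilde{B}_{ij}(q^s)$, so that $\widetilde{B}_{\overline{i},\,i_k}=\widetilde{B}_{\overline{i_k},\,i}$ and $q_i^{-2s}=q_{\overline{i}}^{-2s}$ in the comparison with \eqref{def: p star}. Both follow from $\alpha_{\overline{i}}=-w_0\alpha_i$ together with the fact that $w_0$ is an isometry: this gives $b_{\overline{i}\,\overline{j}}=(\alpha_{\overline{i}},\alpha_{\overline{j}})=(\alpha_i,\alpha_j)=b_{ij}$, whence the symmetrized quantum Cartan matrix and its inverse are invariant under the involution, and in particular $d_{\overline{i}}=d_i$. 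This is the only input where the involution enters, and it is precisely what turns the index $\overline{i}$ from the bottom weight of $W^{(i)}$ into the index $\overline{i_k}$ of the prefundamental factors of $\mathbf{g}$, completing the identification $t_{\Bf,i}^-(z)=\mathbf{g}_i^{\ast}(z)$.
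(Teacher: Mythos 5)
Your proposal is correct and follows essentially the same route as the paper's proof: a direct computation of the eigenvalues from Eq.\eqref{def: T} and Eq.\eqref{equ: fundamental rep}, the substitution $s\mapsto -s$ together with $\widetilde{B}(q^{-s})=\widetilde{B}(q^s)$, and the identities $q_i=q_{\overline{i}}$, $\widetilde{B}_{\overline{i}\,\overline{j}}(q^s)=\widetilde{B}_{ij}(q^s)=\widetilde{B}_{ji}(q^s)$ to match the result against Eq.\eqref{def: p star}. The only cosmetic differences are that the paper first reduces by multiplicativity to a single prefundamental factor $\Psi_{k,a}$ and treats only the second equality (remarking the first is analogous), whereas you compute the full product for both equalities and, usefully, supply the short Weyl-group justification of the involution-invariance facts that the paper simply quotes.
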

\begin{proof}
    We prove the second equality as the same idea works for the first. By definition both sides are multiplicative with respect to $\Bf$ and are independent of the constant term $\lambda$. It suffices to consider the case $\Bf = \Psi_{k,a}$ for $k \in I$ and $a\in \BC^{\times}$. Combining Eq.\eqref{def: T} with Eq.\eqref{equ: fundamental rep} we get the eigenvalue of $T_{\Bf}(z) = \BT_k^-(z^{-1}a^{-1})$ acting on the bottom weight space of $W_i$ as follows (set $c := q^{r^{\vee}h^{\vee}}$ and notice $B(q) = B(q^{-1})$):
    \begin{align*}
        t_{\Bf,i}^-(z) &= \exp\left(\sum_{s > 0} \sum_{j\in I} \widetilde{B}_{jk}(q^s) \frac{q^{-1}-q}{q^{-s}-q^s} \delta_{j\overline{i}} \frac{c^{-s} (1-q_j^{-2s}) }{s(q^{-1}-q)}  a^sz^s\right) \\
        &= \exp\left(\sum_{s > 0} \widetilde{B}_{\overline{i} k}(q^s)  \frac{c^{-s} (1-q_{\overline{i}}^{-2s}) }{s(q^{-s}-q^s)}  a^sz^s\right).
    \end{align*}
    Since $q_i = q_{\overline{i}}$ and $B_{ij}(q) = B_{\overline{i}\overline{j}}(q) = B_{ji}(q)$, we have
    \begin{align*}
        t_{\Bf,i}^-(z) &= \exp\left(\sum_{s > 0} \widetilde{B}_{\overline{k} i}(q^s)  \frac{c^{-s} (1-q_i^{-2s}) }{s(q^{-s}-q^s)}  a^sz^s\right) \\
        &= \exp\left(\sum_{s > 0} \sum_{j\in I} \widetilde{B}_{j i}(q^s)  \frac{1-q_i^{-2s} }{q^s-q^{-s}} \delta_{j\overline{k}} \frac{-c^{-s}a^s}{s} z^s\right).
    \end{align*}
    This recovers Eq.\eqref{def: p star} by taking $\lambda_{j,s} = \delta_{j\overline{k}} \frac{-c^{-s} a^s}{s}$. Since 
    $$\exp\left(-\sum_{s>0} \frac{c^{-s}a^s}{s} z^s\right) = 1- c^{-1}a z,  $$
    we get that $t_{\Bf,i}^-(z)$ is the $i$th component of $\Psi_{\overline{k},c^{-1}a}^*$.
\end{proof}

Let $\Bn \mapsto \widetilde{\Bn}$ denote the monoid automorphism of $\mathfrak{p}$ which fixes the constant $\ell$-weights and sends $\Psi_{i,a}$ to $\Psi_{\overline{i},aq^{r^{\vee}h^{\vee}}}$ for $i \in I$ and $a \in \BC^{\times}$. The main result of this section, about a local polynomiality property of A-series, reads
\begin{theorem}  \label{thm: polynomiality A series}
    Let $s \geq 1$ and $\Bm_k, \Bn_k \in \mathfrak{p}$ be polynomial $\ell$-weights for $1\leq k \leq s$. Set $V$ to be the tensor product of $\Borel$-modules
 $$ V := L\left(\frac{\Bm_1}{\Bn_1}\right) \otimes L\left(\frac{\Bm_2}{\Bn_2}\right) \otimes \cdots \otimes L\left(\frac{\Bm_s}{\Bn_s}\right). $$ 
Define the following polynomial $\ell$-weight 
 \begin{align*}
 \Bp :=  \Bm_1 \Bm_2 \cdots \Bm_s \widetilde{\Bn_1} \widetilde{\Bn_2} \cdots \widetilde{\Bn_s}.
 \end{align*}
 Then for $i \in I$, the image of the generating series $\Bp_i^*(z) \BA_i^+(z)$ in $(\mathrm{End} V)[[z]]$ stabilizes the subspace $V[z]$ of $V[[z]]$.
\end{theorem}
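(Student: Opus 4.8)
The plan is to recognise the target operator as the modified A-series $\Bp_i^*(z)\BA_i^+(z)$ of Eq.~\eqref{def: SA} and to deduce its polynomiality on $V$ from the polynomiality of the reduced universal R-matrix already recorded in Theorem~\ref{thm: poly R Borel}. The bridge between the two is Proposition~\ref{prop: A series from R}: on a top weight vector $\omega\in W^{(i)}$ the evaluated R-matrix $\barR_{V,W^{(i)}}(z)$ reproduces $\BA_i^+(z)$ modulo the strictly lower weight spaces of $W^{(i)}$. Since Theorem~\ref{thm: poly R Borel} asserts that $\alpha_{V,i}(z)\barR_{V,W^{(i)}}(z)$ is polynomial in $z$, the whole argument reduces to identifying the scalar prefactor $\alpha_{V,i}(z)$ with $\Bp_i^*(z)$ and then projecting onto the top weight line of $W^{(i)}$.

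First I would carry out the scalar computation $\alpha_{V,i}(z)=\Bp_i^*(z)$ using the map $\Bf\mapsto\Bf^*$. By Lemma~\ref{lem: truncation poly}, for each $k$ the reciprocal $1/t_{\Bm_k,i}^+(z)$ is the $i$th component of $\Bm_k^*$, while $t_{\Bn_k,i}^-(z)$ is the $i$th component of $(\Bn_k')^*$, where $\Bn_k'$ is obtained from $\Bn_k$ by the inverse of the automorphism $\Bn\mapsto\widetilde{\Bn}$, namely by sending each $\Psi_{j,b}$ to $\Psi_{\overline{j},bq^{-r^{\vee}h^{\vee}}}$. Because $\Bf^*$ sees the spectral parameters only through the products $b\,z$, the substitution $z\mapsto zq^{2r^{\vee}h^{\vee}}$ appearing in Theorem~\ref{thm: poly R Borel} amounts to rescaling every $\Psi_{j,b}$ to $\Psi_{j,bq^{2r^{\vee}h^{\vee}}}$; composing this rescaling with the previous involution sends $\Psi_{j,b}$ to $\Psi_{\overline{j},bq^{r^{\vee}h^{\vee}}}$, which is exactly $\Bn\mapsto\widetilde{\Bn}$. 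Hence $t_{\Bn_k,i}^-(zq^{2r^{\vee}h^{\vee}})$ is the $i$th component of $(\widetilde{\Bn_k})^*$. Using that $\Bf\mapsto\Bf^*$ is multiplicative and ignores constant $\ell$-weights, the product over $k$ telescopes to
\[
\alpha_{V,i}(z)=\Bigl(\prod_{k=1}^{s}\Bm_k\,\widetilde{\Bn_k}\Bigr)_i^{*}=\Bp_i^*(z),
\]
which is precisely the defining factorisation of $\Bp$.

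Granting this, I would conclude as follows. By Theorem~\ref{thm: poly R Borel} the operator $\Bp_i^*(z)\barR_{V,W^{(i)}}(z)$ maps $V\otimes W^{(i)}$ into $(V\otimes W^{(i)})[z]$. Let $P$ be the $z$-independent projection of $V\otimes W^{(i)}$ onto the top weight line $V\otimes\omega\cong V$ along $\sum_{\beta\neq 0}V\otimes W^{(i)}_{\varpi_i\overline{\beta}^{-1}}$. Applying $P$ to $\Bp_i^*(z)\barR_{V,W^{(i)}}(z)(v\otimes\omega)$ and invoking Proposition~\ref{prop: A series from R} gives $\Bp_i^*(z)\BA_i^+(z)v$; as the full operator is polynomial in $z$ and $P$ does not involve $z$, this vector lies in $V[z]$ for every $v\in V$, and extending by $\BC[z]$-linearity shows that $\Bp_i^*(z)\BA_i^+(z)$ stabilises $V[z]$. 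The one genuinely delicate step, and the one I would write out most carefully, is the scalar matching in the second paragraph: the interplay between the argument shift $z\mapsto zq^{2r^{\vee}h^{\vee}}$, the Langlands-type involution $i\mapsto\overline{i}$ and the parameter shift built into $\Bn\mapsto\widetilde{\Bn}$ is where a stray power of $q$ would be fatal, since the entire reduction rests on the equality $\alpha_{V,i}(z)=\Bp_i^*(z)$.
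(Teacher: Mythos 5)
Your proposal is correct and follows exactly the paper's own route: identify $\alpha_{V,i}(z)$ with $\Bp_i^*(z)$ via Lemma \ref{lem: truncation poly} (the interplay of the shift $z\mapsto zq^{2r^{\vee}h^{\vee}}$ with the involution $i\mapsto\overline{i}$ and the parameter shift in $\Bn\mapsto\widetilde{\Bn}$ is precisely what makes the identification work), then combine the polynomiality of Theorem \ref{thm: poly R Borel} with Proposition \ref{prop: A series from R} by projecting onto the top weight line of $W^{(i)}$. The paper compresses this into two sentences; your write-up supplies the scalar matching and the projection argument in full detail, with no gaps.
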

\begin{proof}
    By Lemma \ref{lem: truncation poly} we can identify the power series $\alpha_{V,i}(z)$ of Theorem \ref{thm: poly R Borel} with $\Bp_i^*(z)$. Then apply the polynomiality of Theorem \ref{thm: poly R Borel} to Proposition \ref{prop: A series from R}.
\end{proof}
The above theorem does not imply the {\it global} polynomiality of $\SA_i^+(z)$ acting on $V$: the degrees of $\SA_i^+(z)$ restricted to weight spaces may not be upper bounded. This is more complicated than the Yangian situation: the corresponding A-series is a Laurent series in $z^{-1}$ with fixed leading term $z^{t_i}$ so that local polynomiality \cite[Theorem 8.4]{HZ} implies already the global polynomiality.

\section{Invertibility of dominant coefficients  and descent to truncations}\label{invdom}
In this section we prove one of the main results of this paper : any irreducible module in the category $\mathcal{O}_{\mu}$ factorizes through a truncated shifted quantum affine algebra $\CU_{\mu}^{\Bp}(\Gaff)$ for an explicit truncation parameter $\Bp$ (Theorem  \ref{thm: truncation shifted}). Similar statement is established for a tensor product of several irreducible modules in category $\widehat{\mathcal{O}}$. As an intermediate result, we give in  Proposition \ref{prop: key} a sufficient condition for a module in category $\Osh$ to descend to a truncation. In our proofs, a key step is the invertibility of the dominant coefficients for certain operator-valued polynomials.

\subsection{T-polynomiality}

\begin{defi}  \label{defi: f}
Let $\mu$ be a coweight and $V$ be a top graded module over the shifted quantum affine algebra $\CU_{\mu}(\Gaff)$ with top weight $\lambda \in \mathfrak{t}^*$. For $i \in I$, define $f_i^V(z) \in \BC[[z]]$ to be the eigenvalue of $\BT_i^+(z) \in \CU_{\mu}(\Gaff)[[z]]$ acting on $V_{\lambda}$ and set
$$ \overline{\BT}_i^V(z) := f_i^V(z)^{-1} \BT_i^+(z)|_V \in \mathrm{End}V[[z]].  $$
We say that $V$ is {\it T-polynomial} if for all $i\in I$: on each weight space $V_{\lambda\overline{\beta}^{-1}}$ with $\beta \in \BQ_+$, the operator $\overline{\BT}_i^V(z)$ is a polynomial in $z$ of degree $\langle \varpi_i^{\vee}, \beta\rangle$ whose dominant coefficient is invertible. 
\end{defi}
In the above definition, we do not require $V$ to be in category $\mathcal{O}_{\mu}$. Also, the definition makes sense for $\Borel$-modules by viewing $\BT_i^+(z)$ as a power series with coefficients in $\Borel$.
Our main examples of T-polynomial modules are as follows.
\begin{prop}  \label{prop: poly T}
(i) Any highest $\ell$-weight module over a shifted quantum affine algebra is T-polynomial.

(ii) Any tensor product of T-polynomial $\qaf$-modules is T-polynomial.
\end{prop}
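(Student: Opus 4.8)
The plan is to prove the two parts by different mechanisms: part (i) rests entirely on the commutation relation \eqref{rel: T x-}, while part (ii) rests on the coproduct factorization \eqref{def: Theta} and the degree bound of Theorem \ref{thm: corpoduct T}. In both cases the degree estimate is the easy half; the work is in showing that the degree is \emph{exactly} $\langle\varpi_i^\vee,\beta\rangle$ with invertible dominant coefficient.

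For (i), let $V$ be a highest $\ell$-weight module with highest weight vector $\omega$ and top weight $\lambda$, so that $V_{\lambda\overline{\beta}^{-1}}$ is spanned by the $u\omega$ with $u\in U_q^-(\Gaff)_{-\beta}$. Since $f_i^V(z)$ is a scalar series, $\overline{\BT}_i^V(z)$ acts by the conjugation automorphism $C(\BT_i^+(z))$ of $\CU_\mu(\Gaff)$: one has $\overline{\BT}_i^V(z)\,u\omega = C(\BT_i^+(z))(u)\,\omega$. By \eqref{rel: T x-} this automorphism sends $x_{j,m}^-\mapsto x_{j,m}^- - \delta_{ij}z\,x_{i,m+1}^-$, and since $\langle\varpi_i^\vee,\beta\rangle$ counts the $i$-colored generators in $u$, the image is a polynomial in $z$ of degree $\le\langle\varpi_i^\vee,\beta\rangle$; this already gives the bound and shows each weight space is preserved. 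I would then establish invertibility of the dominant coefficient first on the Verma module $M_\mu(\Bf)$, where $u\mapsto u\omega$ identifies $V_{\lambda\overline{\beta}^{-1}}$ with the finite-dimensional $U_q^-(\Gaff)_{-\beta}$. There the top coefficient $D_i$ is, up to the sign $(-1)^{\langle\varpi_i^\vee,\beta\rangle}$, the $+1$ shift of all $i$-colored indices; using the negative counterpart of \eqref{rel: T x-}, the conjugation by $\BT_i^-(z)$ produces in the same way the $-1$ shift, and the two are mutually inverse on the spanning words, so $D_i$ is bijective on $U_q^-(\Gaff)_{-\beta}$. For a general $V=M_\mu(\Bf)/N$, the operator $D_i$ has coefficients in $\CU_\mu(\Gaff)$, hence preserves the submodule $N$; being injective on the finite-dimensional $M_{\lambda\overline{\beta}^{-1}}$ it restricts to a bijection of $N_{\lambda\overline{\beta}^{-1}}$ and descends to a bijection of $V_{\lambda\overline{\beta}^{-1}}$, which is the dominant coefficient for $V$. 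This yields degree exactly $\langle\varpi_i^\vee,\beta\rangle$ with invertible leading coefficient.

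For (ii), by associativity of the Drinfeld--Jimbo tensor product and induction it suffices to treat $V=V_1\otimes V_2$. Dividing \eqref{def: Theta} by $f_i^V(z)$ gives $\overline{\BT}_i^V(z)=(1\otimes\overline{\BT}_i^{V_2}(z))\,\Theta_i^+(z)\,(\overline{\BT}_i^{V_1}(z)\otimes1)$, after checking $f_i^V(z)=f_i^{V_1}(z)f_i^{V_2}(z)$ (the $\gamma=0$ part of $\Theta_i^+$ is $1\otimes1$ while the higher parts annihilate the top vector). On the decomposition $V_{\lambda\overline{\beta}^{-1}}=\bigoplus_{\beta_1+\beta_2=\beta}(V_1)_{\lambda_1\overline{\beta_1}^{-1}}\otimes(V_2)_{\lambda_2\overline{\beta_2}^{-1}}$, Theorem \ref{thm: corpoduct T} shows the $\gamma$-component of $\Theta_i^+(z)$ moves $(\beta_1,\beta_2)$ to $(\beta_1+\gamma,\beta_2-\gamma)$ with $z$-degree $\le\langle\varpi_i^\vee,\gamma\rangle$; together with the T-polynomiality of $V_1$ and $V_2$ the three factors contribute degrees $\langle\varpi_i^\vee,\beta_1\rangle$, $\le\langle\varpi_i^\vee,\gamma\rangle$ and $\langle\varpi_i^\vee,\beta_2-\gamma\rangle$, summing to $\le\langle\varpi_i^\vee,\beta\rangle$. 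Since $\gamma\in\BQ_+$, the dominant coefficient is block-triangular with respect to the finite filtration by $\beta_1$, and its diagonal block (coming from $\gamma=0$) is $D_i^{V_1}\otimes D_i^{V_2}$, which is invertible by T-polynomiality of the factors. A block-triangular operator with invertible diagonal blocks is invertible, giving T-polynomiality of $V$.

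The hard part throughout is the invertibility of the dominant coefficient rather than the degree bound. In (i) this is the crux, and I expect the cleanest route to be the mutually-inverse shift operators obtained from $C(\BT_i^+(z))$ and $C(\BT_i^-(z))$ on the Verma module, followed by the descent to quotients (which relies only on $D_i$ having coefficients in $\CU_\mu(\Gaff)$ and on finite-dimensionality of weight spaces). In (ii) the delicate point is organizing the three-factor degree count so that exactly the $\gamma=0$ term survives on the diagonal of the filtration by $\beta_1$; the positivity $\gamma\in\BQ_+$ in Theorem \ref{thm: corpoduct T} is precisely what makes the triangularity work.
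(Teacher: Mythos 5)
Your part (ii) is correct and is essentially the paper's argument: the paper defers the key computation to \cite[Proposition 6.2(ii)]{Z}, and your reconstruction from the factorization \eqref{def: Theta} and Theorem \ref{thm: corpoduct T} — diagonal blocks $D^{V_1}_{i,\beta_1}\otimes D^{V_2}_{i,\beta_2}$ coming from the $\gamma=0$ component $1\otimes 1$ of $\Theta_i^+(z)$, strictly triangular contributions for $\gamma\in\BQ_+\setminus\{0\}$ moving $(\beta_1,\beta_2)$ to $(\beta_1+\gamma,\beta_2-\gamma)$ — is exactly the uni-triangularity the paper invokes. Part (i) also rests on the same key mechanism as the paper's proof: the top coefficients of the normalized $\BT_i^+(z)$ and $\BT_i^-(z)$ act on spanning vectors $x^-_{i_1,m_1}\cdots x^-_{i_s,m_s}\omega$ as mutually inverse index-shift operators (up-shift and down-shift of the $i$-colored indices).

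However, your descent-from-Verma step in (i) contains a genuine error: the weight space $M_{\lambda\overline{\beta}^{-1}}$ of the Verma module, which you identify with $U_q^-(\Gaff)_{-\beta}$, is \emph{not} finite-dimensional. The grading here is by the finite root lattice $\BQ$, not the affine one, so already $U_q^-(\Gaff)_{-\alpha_i}$ contains the linearly independent elements $x^-_{i,m}$ for all $m\in\BZ$. Consequently the chain ``injective on $M_{\lambda\overline{\beta}^{-1}}$ $\Rightarrow$ bijective on $N_{\lambda\overline{\beta}^{-1}}$ $\Rightarrow$ bijective on the quotient'' collapses: in infinite dimensions, a bijective operator preserving a subspace need not induce an injection on the quotient (consider the two-sided shift $e_n\mapsto e_{n+1}$ on $\bigoplus_{n\in\BZ}\BC e_n$ preserving the span of the $e_n$ with $n\geq 0$). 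The repair is immediate with tools you already have: the down-shift operator $D_i^-$ (the extreme coefficient of the normalized $\BT_i^-(z)$) is, like $D_i$, a finite combination of scalars and coefficients of a $\BT$-series, hence the action of an element of $\CU_{\mu}(\Gaff)$; it therefore also preserves $N$ and descends, and since $D_i^-D_i = D_iD_i^- = \mathrm{id}$ on $M$, the same identities hold on $M/N$, giving invertibility on the quotient. Simpler still — and this is what the paper does — the Verma detour is unnecessary: the weight spaces of any highest $\ell$-weight module $V$ are themselves spanned by the vectors $x^-_{i_1,m_1}\cdots x^-_{i_s,m_s}\omega$, both shift operators are well defined directly on $V$ as coefficients of operator-valued polynomials, and their compositions equal the identity because they equal it on this spanning set.
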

\begin{proof}
The idea is the same as that of \cite[Proposition 6.2]{Z} in the Yangian situation, based on Eqs.\eqref{rel: T x-}, \eqref{def: Theta} and Theorem \ref{thm: corpoduct T}. 

(i) Let $V$ be a highest $\ell$-weight module over $\CU_{\mu}(\Gaff)$ with top weight $\lambda$ and let $\omega$ be a nonzero vector in the top weight space of $V$. Then the weight space $V_{\lambda\overline{\beta}^{-1}}$ for $\beta \in \BQ_+$ is spanned by the $x_{i_1,m_1}^- x_{i_2,m_2}^- \cdots x_{i_s,m_s}^- \omega$ such that $\beta = \alpha_{i_1}+\alpha_{i_2} + \cdots + \alpha_{i_s}$. Applying $\overline{\BT}_i^V(z)$ to such a vector and making use of Eq.\eqref{rel: T x-}:
$$ \overline{\BT}_i^V(z) x_{j,m}^- = (x_{j,m}^- - z \delta_{ij} x_{j,m+1}^-) \overline{\BT}_i^V(z) $$
we get that $\overline{\BT}_i^V(z)$ acting on $V_{\lambda\overline{\beta}^{-1}}$ is a polynomial of degree bounded above by the number of $i$s in the sequence $i_1i_2\cdots i_s$, namely $\langle \varpi_i^{\vee},\beta\rangle$. Furthermore, its coefficient of $z^{\langle \varpi_i^{\vee},\beta\rangle}$, denoted by $D_{i,\beta}^V \in \mathrm{End}(V_{\lambda\overline{\beta}^{-1}})$, acts as
$$ x_{i_1,m_1}^- x_{i_2,m_2}^- \cdots x_{i_s,m_s}^- \omega \mapsto (-1)^{\langle \varpi_i^{\vee},\beta\rangle}x_{i_1,m_1+\delta_{i_1i}}^- x_{i_2,m_2+\delta_{i_2i}}^- \cdots x_{i_s,m_s+\delta_{i_si}}^- \omega. $$
It suffices to prove the invertibility of $D_{i,\beta}^V$. Let $h_i^V(z) \in \BC[[z^{-1}]]$ be the eigenvalue of $\BT_i^-(z) \in \CU_{\mu}(\Gaff)[[z^{-1}]]$ acting on the top weight space of $V$. Then from the negative part of Eq.\eqref{rel: T x-} we have a similar polynomiality of $h_i^V(z)^{-1} \BT_i^-(z)$ acting on each weight space $V_{\lambda\overline{\beta}^{-1}}$. Its coefficient of $z^{\langle \varpi_i^{\vee},\beta\rangle}$ acts as
$$ x_{i_1,m_1}^- x_{i_2,m_2}^- \cdots x_{i_s,m_s}^- \omega \mapsto (-1)^{\langle \varpi_i^{\vee},\beta\rangle}x_{i_1,m_1-\delta_{i_1i}}^- x_{i_2,m_2-\delta_{i_2i}}^- \cdots x_{i_s,m_s-\delta_{i_si}}^- \omega. $$
This is exactly the inverse of $D_{i,\beta}^V$.

(ii) Let $V$ and $W$ be T-polynomial $\qaf$-modules with top weights $\lambda_1$ and $\lambda_2$ respectively. A careful look at the arguments in the proof of \cite[Proposition 6.2(ii)]{Z} shows that the dominant coefficient is uni-triangular: if $v \in V_{\lambda_1\overline{\gamma_1}^{-1}}$ and $w \in W_{\lambda_2\overline{\gamma_2}^{-1}}$ with $\gamma_1, \gamma_2 \in \BQ_+$, then $D_{i,\gamma_1+\gamma_2}^{V\otimes W}(v\otimes w)$ is equal to $D_{i,\gamma_1}^V(v) \otimes D_{i,\gamma_2}^W(w)$ plus a finite sum of vectors in $V_{\lambda_1\overline{\gamma_1}^{-1}\overline{\beta}^{-1}} \otimes W_{\lambda_2\overline{\gamma_2}^{-1}\overline{\beta}}$ for $\beta \in \BQ_+\setminus\{0\}$. Then the invertibility of $D_i^{V\otimes W}$ follows from the invertibility of $D_i^V$ and $D_i^W$. 
\end{proof}
The proof of invertibility used the Drinfeld--Cartan generators $h_{j,s}$ for $s<0$ and does not work for highest $\ell$-weight modules of the Borel subalgebra. For example, when $\Glie = \mathfrak{sl}_2$, the infinite-dimensional simple $\Borel$-module $L(1-z)$ is not T-polynomial because $\BT_1^+(z)$ acts identically as 1; see \cite[\S 7.1]{HJ}.


\subsection{Global A-polynomiality}

In the next proposition \ref{prop: key} we give a sufficient condition for a module in category $\Osh$ to descend to a truncation, by passing from local A-polynomiality to global A-polynomiality.

\begin{prop}  \label{prop: key}
    Let $(\mu, \Bp, \mathbf{z})$ be a triple of coweight, polynomial $\ell$-weight and constant $\ell$-weight satisfying Eq.\eqref{truncation: new parameter}.
    Let $V$ be a T-polynomial module in category $\mathcal{O}_{\mu}$. Assume that for each $i \in I$, the image of the generating series $\SA_i^+(z) := \Bp_i^*(z) \BA_i^+(z)$ in $(\mathrm{End}V)[[z]]$ stabilizes $V[z]$. Then 
    \begin{itemize}
        \item[(i)] The pair $(\mu, \Bp)$ is truncatable in the sense of Definition \ref{defi: truncation} and the $\CU_{\mu}(\Gaff)$-module $V$ factorizes through the simply-connected truncation $\CU_{\mu}^{\Bp}(\Gaff)$.
        \item[(ii)] There exists  $\lambda' \in \mathfrak{p}_0$ such that the $\CU_{\mu}^{\Bp}(\Gaff)$-module structure on $L_0(\lambda') \otimes V$ can be extended to the adjoint truncation $\CU_{\mu}^{\Bp,\mathbf{z}}(\Gaff)$.
        \item[(iii)] The $\CU_{\mu}(\Gaff)$-module $V$ is of finite representation length.
    \end{itemize}
\end{prop}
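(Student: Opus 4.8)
The plan is to deduce all three parts from the hypotheses by combining the local-to-global polynomiality passage with the criterion of Corollary \ref{cor: truncation criterion} and the Jordan--H\"older property of Theorem \ref{thm: JH}. First I would establish the \emph{global} polynomiality of $\SA_i^+(z)$ acting on $V$, which is the crux of the matter. By hypothesis, the image of $\SA_i^+(z) = \Bp_i^*(z)\BA_i^+(z)$ stabilizes $V[z]$, so on every weight space $V_{\lambda\overline{\beta}^{-1}}$ it acts as a polynomial in $z$. The issue, as the remark after Theorem \ref{thm: polynomiality A series} emphasizes, is that these degrees could \emph{a priori} grow unboundedly with $\beta$; to invoke Corollary \ref{cor: truncation criterion} I need a uniform degree bound, i.e.\ that $\SA_i^+(z)|_V$ is globally a polynomial whose dominant coefficient is invertible. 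Here is where T-polynomiality of $V$ enters. Since $\BA_i^+(z) = \BT_i^+(zq_i^{-2})/\BT_i^+(z)$ by Eq.\eqref{rel: A and T}, and $V$ is T-polynomial, the operator $\overline{\BT}_i^V(z)$ has degree exactly $\langle \varpi_i^{\vee},\beta\rangle$ with invertible dominant coefficient $D_{i,\beta}^V$ on each weight space. The plan is to use this precise degree and invertibility control on $\BT_i^+$ to convert the local polynomiality of $\SA_i^+(z)$ into a global statement: the degree of $\SA_i^+(z)$ on $V_{\lambda\overline{\beta}^{-1}}$ must stabilize to a fixed value $t_i$ independent of $\beta$, with invertible dominant coefficient.

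The hard part will be extracting this uniform bound and the invertibility of the global dominant coefficient, and I expect the argument to run as follows. Writing $\BA_i^+(z) = \overline{\BT}_i^V(zq_i^{-2})\,\overline{\BT}_i^V(z)^{-1}$ up to the scalar $f_i^V(zq_i^{-2})/f_i^V(z)$, one sees that $\SA_i^+(z)$ equals a rational function whose numerator and denominator have, on $V_{\lambda\overline{\beta}^{-1}}$, controlled degrees governed by $\langle\varpi_i^{\vee},\beta\rangle$. The stabilization hypothesis forces cancellation of the denominator, and the invertibility of $D_{i,\beta}^V$ guarantees that the resulting dominant coefficient of $\SA_i^+(z)$ is invertible and that its degree $t_i$ does not depend on $\beta$ once $\beta$ is large. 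With global polynomiality and invertible dominant coefficient in hand, Corollary \ref{cor: truncation criterion} applies directly to give part (i): the pair $(\mu,\Bp)$ is truncatable and $V$ factorizes through $\CU_{\mu}^{\Bp}(\Gaff)$.

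For part (ii) I would pass from the simply-connected truncation to the adjoint truncation via the balancing mechanism of Section \ref{sec: FT}. The stabilized dominant coefficient of $\SA_i^+(z)$ acting on $V_{\lambda\overline{\beta}^{-1}}$ is exactly $\SA_{i,t_i}^+$, and the computation above shows it acts as a fixed scalar $u_i$ times $q_i^{-2\langle\varpi_i^{\vee},\beta\rangle}$ on each weight space, which is precisely the balancing condition of Definition \ref{defi: balance}(ii). Thus $V$, viewed as a module over the intermediate truncation $\overline{\CU}_{\mu}^{\Bp}(\Gaff)$ after imposing Eq.\eqref{truncation: intermediate}, is balanced. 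Proposition \ref{prop: balace} then extends the structure, up to a tensor product by a one-dimensional sign module $L_0(\lambda')$ with $\lambda' \in \mathfrak{p}_0$, to the adjoint truncation $\CU_{\mu}^{\Bp,\mathbf{z}}(\Gaff)$, which is the assertion of part (ii). One subtlety to check is that imposing Eq.\eqref{truncation: intermediate} is compatible with the action already obtained on $V$; this should follow from the rationality of $\phi_i^{\pm}(z)$ (Proposition \ref{prop: rationality}) together with Eqs.\eqref{rel: GKLO+}--\eqref{rel: GKLO-} evaluated on the top weight space.

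Finally, part (iii) is immediate from part (ii) combined with Theorem \ref{thm: JH}. The module $L_0(\lambda')\otimes V$ extends to the adjoint truncation $\CU_{\mu}^{\Bp,\mathbf{z}}(\Gaff)$ and lies in category $\mathcal{O}_{\mu}$, so by the Jordan--H\"older property for adjoint truncations it has finite representation length as a $\CU_{\mu}(\Gaff)$-module. Since tensoring by the invertible one-dimensional module $L_0(\lambda')$ is an auto-equivalence (it preserves composition series and in particular length), the module $V$ itself is of finite representation length. The main obstacle throughout is the first step, the local-to-global upgrade of A-polynomiality; once the uniform degree bound and invertibility of the dominant coefficient are secured, the remaining deductions are formal applications of the machinery recalled in Sections \ref{sec: FT} and \ref{TruncRmat}.
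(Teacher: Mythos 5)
Your overall architecture coincides with the paper's proof: for (i) you use Eq.\eqref{rel: A and T} together with T-polynomiality to compare dominant terms on each weight space and so upgrade local A-polynomiality to global A-polynomiality, then invoke Corollary \ref{cor: truncation criterion}; for (iii) you combine Theorem \ref{thm: JH} with the auto-equivalence given by tensoring with an invertible module. Both steps match the paper's argument. One imprecision in (i): the dominant-term comparison (using invertibility of $D_{i,\beta}^V$) yields that the degree of $\SA_i^+(z)$ on $V_{\lambda\overline{\beta}^{-1}}$ equals $\deg p_i(z)$ for \emph{every} $\beta\in\BQ_+$, not merely "once $\beta$ is large"; this matters, because a strictly smaller degree on even one weight space would make the global dominant coefficient fail to be invertible, and Corollary \ref{cor: truncation criterion} could not be applied.

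The genuine gap is in (ii). You claim that $V$ itself, with the $\CU_{\mu}^{\Bp}(\Gaff)$-module structure from (i), descends to the intermediate truncation, i.e.\ satisfies Eq.\eqref{truncation: intermediate}, and that this "compatibility" follows from rationality of $\phi_i^{\pm}(z)$ and Eqs.\eqref{rel: GKLO+}--\eqref{rel: GKLO-} on the top weight space. This is false in general. What the weight-space computation (via Eq.\eqref{truncation: simply connected} and the scalars $u_i q_i^{-2\langle\varpi_i^{\vee},\beta\rangle}$ by which $\SA_{i,t_i}^+$ acts) actually gives is that the central element $\phi_{i,0}^+\phi_{i,m_i}^-$ acts on $V$ by \emph{some} scalar $v_i$; nothing forces $v_i$ to equal the specific value $\Bp_{i,0}^{-1}\Bp_{i,n_i}\prod_{j\in I}(-q_j)^{-t_jc_{ji}}$ demanded by Eq.\eqref{truncation: intermediate}. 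The paper's own $\mathfrak{sl}_2$ example makes this explicit: $L_{-\varpi_1^{\vee}}(\frac{b}{1-za})$ factorizes through the simply-connected truncation for all $b$, but through the intermediate truncation only when $b^2=a^2$. The missing step -- and the actual content of the paper's proof of (ii) -- is to compute $v_i$ explicitly, choose $\lambda_i'$ to be a square root of $v_i^{-1}\Bp_{i,0}^{-1}\Bp_{i,n_i}\prod_{j\in I}(-q_j)^{-t_jc_{ji}} = \lambda_i^{-2}\prod_{j\in I}((-1)^{t_j}u_j)^{c_{ji}}$, and pass to $V':=L_0(\lambda')\otimes V$: this rescales the action of $\phi_{i,0}^+$ without changing the action of the $\SA_i^{\pm}(z)$, so $V'$ does descend to $\overline{\CU}_{\mu}^{\Bp}(\Gaff)$ and is still balanced, and only then does Proposition \ref{prop: balace} apply (its additional sign twist being absorbed into the $\lambda'$ of the statement). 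Note in particular that this $\lambda'$ is in general a genuine constant $\ell$-weight involving square roots of the $u_i$ and $\lambda_i$, not the mere "sign module" your proposal describes.
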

\begin{proof}
    (i) By assumption the eigenvalue of the generating series $\SA_i^+(z)$ acting on the one-dimensional top weight space $V_{\lambda}$ is a polynomial $p_i(z)$ with constant term 1. Its dominant term is of the form $u_i z^{r_i}$ with $u_i \in \BC^{\times}$ and $r_i \in \BN$. Combining Eq.\eqref{rel: A and T} with Definition \ref{defi: f} we have 
    $$ \SA_i^+(z) \overline{\BT}_i^V(z) = 
 p_i(z) \overline{\BT}_i^V(zq_i^{-2}) \in (\mathrm{End}V)[[z]].  $$
 Restrict the above equation to a weight space $V_{\lambda
 \overline{\beta}^{-1}}$ with $\beta \in \BQ_+$, we get an equality of polynomials with coefficients in the finite-dimensional algebra $\mathrm{End}(V_{\lambda\overline{\beta}^{-1}})$. Let $z^n E_{i,\beta}^V$ and $z^{\langle \varpi_i^{\vee}, \beta\rangle} D_{i,\beta}^V$ denote the dominant terms of $\SA_i^+(z)$ and $\overline{\BT}_i^V(z)$ respectively acting on $V_{\lambda\overline{\beta}^{-1}}$. By comparing the dominant terms at both sides we get
 $$E_{i,\beta}^V z^n D_{i,\beta}^Vz^{\langle\varpi_i^{\vee},\beta\rangle}  =  u_i z^{r_i}  D_{i,\beta}^V (zq_i^{-2})^{\langle\varpi_i^{\vee},\beta\rangle} \in \mathrm{End} (V_{\lambda\overline{\beta}^{-1}})[z]. $$
Since $D_{i,\beta}^V$ is invertible, the polynomial $\SA_i^+(z)|_{V_{\lambda\overline{\beta}^{-1}}}$ is of degree $n = r_i$ independent of $\beta$, and its dominant coefficient $E_{i,\beta}^V$ is the nonzero scalar $u_i q_i^{-2\langle\varpi_i^{\vee},\beta\rangle }$. 

As a consequence, the generating series $\SA_i^+(z)$ acting on $V$ is a polynomial of fixed degree $r_i$ with invertible dominant coefficient. We apply Corollary \ref{cor: truncation criterion} to get (i).

(ii)  We use the notations of Definition \ref{defi: truncation} so that $r_i = t_i$ and $\mu = \sum_{i\in I} m_i \varpi_i^{\vee}$. We show first that for $i \in I$ the central element $\phi_{i,0}^+\phi_{i,m_i}^-$ acts on $V$ as a scalar. Let $\lambda = (\lambda_i)_{i\in I}$ be the top weight of $V$. Let $\beta = \sum_{j\in I} s_j \alpha_j \in \BQ_+$ with $s_j \in \BN$. By definition of weight, $\phi_{i,0}^+$ acts on the weight space $V_{\lambda \overline{\beta}^{-1}}$ as the scalar
$$ \lambda_i \prod_{j\in I} q^{-s_j b_{ji}} = \lambda_i \prod_{j\in I} q_j^{-s_j c_{ji}} =: \lambda_{i,\beta}. $$
Since $V$ is a module over $\CU_{\mu}^{\Bp}(\Gaff)$, from Eq.\eqref{truncation: simply connected} and the above computation of action of the $\SA_{j,t_j}^+$ we see that $\phi_{i,m_i}^-$ acts on $V_{\lambda\overline{\beta}^{-1}}$ as the scalar 
$$ \lambda_{i,\beta}  \Bp_{i,n_i} \Bp_{i,0}^{-1} \prod_{j\in I} (u_j q_j^{-2s_j} q_j^{t_j})^{-c_{ji}} = \lambda_i \Bp_{i,n_i} \Bp_{i,0}^{-1} \prod_{j\in I} (u_j  q_j^{t_j})^{-c_{ji}} \times \prod_{j\in I} q_j^{s_j c_{ji}}. $$
It follows that $\phi_{i,0}^+ \phi_{i,m_i}^-$ acts on the weight space $V_{\lambda \overline{\beta}^{-1}}$ as the following scalar 
$$ \lambda_i^2 \Bp_{i,n_i} \Bp_{i,0}^{-1} \prod_{j\in I} (u_j  q_j^{t_j})^{-c_{ji}} =: v_i. $$
Clearly $v_i$ is independent of $\beta$. So $\phi_{i,0}^+ \phi_{i,m_i}^-$ acts on $V$ as the scalar $v_i$.

Next, for each $i \in I$ choose a square root $\lambda_i'$ of
$$ v_i^{-1} \Bp_{i,0}^{-1} \Bp_{i,n_i} \prod_{j\in I} (-q_j)^{-t_jc_{ji}} = \lambda_i^{-2} \prod_{j\in I} ((-1)^{t_j} u_j)^{c_{ji}} .  $$
Take $\lambda' = (\lambda_i')_{i\in I} \in \mathfrak{t}^*$ and $V' := L_0(\lambda') \otimes V$. One verifies directly that the module $V'$ descends to the intermediate truncation $\overline{\CU}_{\mu}^{\Bp}(\Gaff)$. Since tensor product with an invertible module does not change the action of the series $\SA_i^{\pm}(z)$, the computation of the action of $\SA_{i,t_i}^+$ of (i) still makes sense in $V'$, which implies that $V'$ is balanced in the sense of Definition \ref{defi: balance}. We apply Proposition \ref{prop: balace} to get (ii).

 (iii) Since $L_0(\lambda') \otimes V$ is a module over an adjoint truncation, by Theorem \ref{thm: JH} it is of finite length. The same holds true for the $\CU_{\mu}(\Gaff)$-module $V$ as the functor of tensor product by an invertible module is an auto-equivalence of the module category $\mathcal{O}_{\mu}$.
\end{proof}

\subsection{Main result} Recall the monoid automorphism $\Bp \mapsto \widetilde{\Bp}$ of $\mathfrak{p}$ defined before Theorem \ref{thm: polynomiality A series}.

\begin{theorem}  \label{thm: truncation shifted}
Let $s\geq 1$ and $\Bm_k, \Bn_k \in \mathfrak{p}$ be polynomial $\ell$-weights of coweights $\mu_k$ and $\nu_k$ respectively for $1\leq k \leq s$. Define the coweight $\mu$ and the polynomial $\ell$-weight $\Bp$ by
$$ \mu = \mu_1+\mu_2+\cdots+\mu_s - \nu_1-\nu_2-\cdots-\nu_s,\quad \Bp :=  \Bm_1 \Bm_2 \cdots \Bm_s \widetilde{\Bn_1} \widetilde{\Bn_2} \cdots \widetilde{\Bn_s}. $$
 Then the pair $(\mu, \Bp)$ is truncatable. Moreover, for any constant $\ell$-weight $\mathbf{z}$ satisfying Eq.\eqref{truncation: new parameter}, the following statements hold true.
\begin{itemize}
    \item[(i)] The following fusion product module over $\CU_{\mu}(\Gaff)$ is of finite length in category $\mathcal{O}_{\mu}$ and, up to tensor product by an invertible module, the module structure can be extended to the adjoint truncation $\CU_{\mu}^{\Bp,\mathbf{z}}(\Gaff)$:
    $$ L_{\mu_1-\nu_1}\left(\frac{\Bm_1}{\Bn_1}\right) \ast L_{\mu_2-\nu_2}\left(\frac{\Bm_2}{\Bn_2}\right) \ast \cdots \ast L_{\mu_s-\nu_s}\left(\frac{\Bm_s}{\Bn_s}\right).  $$
    \item[(ii)] Assume that $\frac{\Bm_k}{\Bn_k} \in \widehat{\mathfrak{r}}$ for $1\leq k \leq s$. The following tensor product module over $\qaf$ is of finite length in category $\widehat{\mathcal{O}}$ and, up to tensor product by an invertible module, the module structure can be extended to $\CU_0^{\Bp,\mathbf{z}}(\Gaff)$:
    $$ L_0\left(\frac{\Bm_1}{\Bn_1}\right) \otimes L_0\left(\frac{\Bm_2}{\Bn_2}\right) \otimes \cdots \otimes L_0\left(\frac{\Bm_s}{\Bn_s}\right). $$
\end{itemize}

\end{theorem}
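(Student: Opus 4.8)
The plan is to deduce both parts from Proposition~\ref{prop: key} applied to the triple $(\mu,\Bp,\mathbf{z})$, so that the whole task reduces to checking, for the fusion product in (i) and the tensor product in (ii), the two hypotheses of that proposition: that the module is T-polynomial and lies in the relevant category $\mathcal{O}$, and that for every $i\in I$ the image of $\SA_i^+(z)=\Bp_i^*(z)\BA_i^+(z)$ in $(\mathrm{End}\,V)[[z]]$ stabilizes $V[z]$. Granting these, parts (i)--(iii) of Proposition~\ref{prop: key} return precisely the three assertions of the theorem: truncatability of $(\mu,\Bp)$, the extension to the adjoint truncation $\CU_{\mu}^{\Bp,\mathbf{z}}(\Gaff)$ up to an invertible twist, and finite length.

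The first hypothesis I would dispatch quickly. In (i) the fusion product is a highest $\ell$-weight $\CU_{\mu}(\Gaff)$-module of highest $\ell$-weight $\prod_k\Bm_k\Bn_k^{-1}$ lying in $\mathcal{O}_{\mu}$, so Proposition~\ref{prop: poly T}(i) applies verbatim. In (ii) I first note that $\Bm_k/\Bn_k\in\widehat{\mathfrak{r}}\subset\mathfrak{r}_0$ forces $\mu_k=\nu_k$, hence $\mu=0$; each factor $L_0(\Bm_k/\Bn_k)$ is then a highest $\ell$-weight $\qaf$-module, T-polynomial by Proposition~\ref{prop: poly T}(i), and the tensor product is T-polynomial by Proposition~\ref{prop: poly T}(ii) and lies in $\widehat{\mathcal{O}}\subset\mathcal{O}_0$.

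The substance of the argument is the stabilization of $V[z]$, and this is exactly what Theorem~\ref{thm: polynomiality A series} supplies, with the very same $\Bp=\Bm_1\cdots\Bm_s\widetilde{\Bn_1}\cdots\widetilde{\Bn_s}$: setting $V_{\mathrm{Bor}}:=L(\Bm_1/\Bn_1)\otimes\cdots\otimes L(\Bm_s/\Bn_s)$ for the Drinfeld--Jimbo tensor product of Borel modules, that theorem asserts $\SA_i^+(z)$ stabilizes $V_{\mathrm{Bor}}[z]$. The remaining task is to transport this from $V_{\mathrm{Bor}}$ to the module at hand. For (ii) this is immediate: since $\mu=0$ is antidominant, Theorem~\ref{thm: Borel O}(ii) identifies the restriction of each $L_0(\Bm_k/\Bn_k)$ along $\Borel\hookrightarrow\qaf$ with $L(\Bm_k/\Bn_k)$, so that restricting $W$ to $\Borel$ recovers $V_{\mathrm{Bor}}$; as $\BA_i^+(z)$, hence $\SA_i^+(z)$, has coefficients in the positive part carried by $\Borel$, its action on $W$ coincides with its action on $V_{\mathrm{Bor}}$, and stabilization follows at once.

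The hard part will be the corresponding comparison in (i): the factors are genuinely shifted, the coweights $\mu_k-\nu_k$ need not be antidominant so there is no embedding $\jmath$ to restrict along, and the fusion product is built from the Drinfeld coproduct rather than the Drinfeld--Jimbo one, so $V_{\mathrm{Bor}}$ is not literally a restriction of $V$. My plan is nonetheless to control the positive abelian operator $\BA_i^+(z)$ through the coproduct factorization~\eqref{def: Theta} of $\Delta(\BT_i^+(z))$, which dictates how $\BT_i^+(z)$, and therefore $\BA_i^+(z)$, distributes across tensor factors, exactly as in the proof of Proposition~\ref{prop: poly T}(ii). The crux is to show that, because $\BA_i^+(z)$ depends only on the $h_{j,s}$ with $s>0$ and its action on a highest $\ell$-weight module is fixed by the top eigenvalue together with the shift-independent lowering action of $U_q^-(\Gaff)$ from~\eqref{identification subalgebras}, its matrix coefficients on the fusion product are computed by the same Borel data that governs $V_{\mathrm{Bor}}$; the degree bounds furnished by Theorem~\ref{thm: polynomiality A series} on $V_{\mathrm{Bor}}$ then bound the degrees on $V$, yielding the stabilization of $V[z]$. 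With this comparison in hand, Proposition~\ref{prop: key} concludes the proof in both cases.
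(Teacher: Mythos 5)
Your overall skeleton --- Proposition \ref{prop: key} combined with Theorem \ref{thm: polynomiality A series}, with T-polynomiality supplied by Proposition \ref{prop: poly T} --- is the paper's, and your treatment of part (ii) (restrict to $\Borel$ via the Hopf inclusion, note that $\BA_i^+(z)$ has coefficients in $\Borel$ so its action on the tensor product coincides with its action on $V_{\mathrm{Bor}}$, then invoke Theorem \ref{thm: polynomiality A series}) is exactly the paper's argument. The genuine gap is in part (i), precisely at the step you flag as ``the hard part,'' and your proposed mechanism cannot close it. The factorization \eqref{def: Theta} and the argument of Proposition \ref{prop: poly T}(ii) concern the Drinfeld--Jimbo coproduct of the \emph{unshifted} algebra $\qaf$; the fusion product of shifted modules is built from the Drinfeld formal coproduct (a Drinfeld--Jimbo-type coproduct for shifted algebras is conjectural beyond type A, as the introduction recalls), so $\BT_i^+(z)$ does not ``distribute across the fusion factors exactly as in Proposition \ref{prop: poly T}(ii).'' Moreover, the central claim that the matrix coefficients of $\BA_i^+(z)$ on $V$ are ``computed by the same Borel data'' as on $V_{\mathrm{Bor}}$ is unjustified: $V$ and $V_{\mathrm{Bor}}$ are modules over different algebras on genuinely different vector spaces (already for $s=1$, $\Bm_1=\Psi_{i,a}$, $\Bn_1=1$, the module $V=L_{\varpi_i^{\vee}}(\Psi_{i,a})$ is one-dimensional while $V_{\mathrm{Bor}}=L(\Psi_{i,a})$ is infinite-dimensional), with no map between them. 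The action of $\BA_i^+(z)$ on a highest $\ell$-weight module is \emph{not} determined by the highest $\ell$-weight together with the lowering action in any way that controls degrees: conjugating $x_{j,m}^-$ by $\BT_i^+(z)^{\pm1}$, hence by $\BA_i^+(z)$, produces infinite series in the modes $x_{i,m+k}^-$, whose effective truncation depends on the particular module. So degree bounds do not transfer between two highest $\ell$-weight modules merely because they share a highest $\ell$-weight.

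The paper circumvents all of this by two reductions you are missing, after which the Borel comparison becomes literal rather than analogical. First, since fusion products of highest $\ell$-weight modules over simply-connected truncations descend to the truncation with the product parameter (\cite[Proposition 12.5]{H}, quoted in Section \ref{ss: sc truncations}), one may assume $s=1$, i.e.\ $V=L_{\mu_1-\nu_1}(\Bm_1/\Bn_1)$. Second, $V$ is an irreducible subquotient of the twist $L_{\mu_1}(\Bm_1)\otimes L_{-\nu_1}(1/\Bn_1)$ by the one-dimensional module $L_{\mu_1}(\Bm_1)$ in the sense of Eq.~\eqref{rel: tensor product one-dim}, and this twist is also truncation-compatible; so one may further assume $\Bm_1=1$. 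Now the coweight $-\nu_1$ \emph{is} antidominant, so Proposition \ref{prop: embedding} applies, Theorem \ref{thm: Borel O}(ii) identifies $V$ restricted along $\jmath_{-\nu_1}$ with the $\Borel$-module $L(1/\Bn_1)$, and since $\jmath_{-\nu_1}$ preserves the positive $A$-series, Theorem \ref{thm: polynomiality A series} (with $s=1$) gives the stabilization of $V[z]$ needed for Proposition \ref{prop: key}. Without these reductions there is no Borel module in sight for a general fusion product, and your part (i) remains unproven.
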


\begin{proof}
By Proposition \ref{prop: poly T}, both the fusion product and the tensor product are T-polynomial. By Proposition \ref{prop: key}, it suffices to show that both modules factorize through the simply-connected truncation $\CU_{\mu}^{\Bp}(\Gaff)$.

(i) Since fusion product is compatible with truncation, one may assume $s=1$ consider a single irreducible module $V := L_{\mu}(\frac{\Bm_1}{\Bn_1})$. Notice that $V$ is an irreducible subquotient of the tensor product module $L_{\mu_1}(\Bm_1) \otimes L_{-\nu_1}(\frac{1}{\Bn_1})$ defined after Eq.\eqref{rel: tensor product one-dim}. Such a tensor product is compatible with truncation. So we may assume further $\Bm_1 = 1$.

    Since the coweight $\mu$ is antidominant, we can regard the $\CU_{\mu}(\Gaff)$-module $V$ as a $\Borel$-module by the algebra homomorphism $\jmath_{\mu}$ of Proposition \ref{prop: embedding}. By Theorem \ref{thm: Borel O} this gives us $L(\frac{1}{\Bn_1})$. Apply Theorem \ref{thm: polynomiality A series} to this $\Borel$-module and notice that $\jmath_{\mu}$ preserves positive A-series, we see that the generating series $\Bp_i^*(z) \BA_i^+(z) \in \CU_{\mu}(\Gaff)[[z]]$ maps $V$ to $V[z]$. Proposition \ref{prop: key}(i) applies.

 (ii) For the tensor product modules, one uses directly the Hopf algebra inclusion $\Borel \subset \qaf$ and applies again Theorem \ref{thm: polynomiality A series} and Proposition \ref{prop: key}(i).  
\end{proof}
It follows from Theorem \ref{thm: truncation shifted} and the proof of Proposition \ref{prop: key}(i) that in the category $\mathcal{O}_{\mu}$: each simple module factorizes through a certain simply-connected truncation $\CU_{\mu}^{\Bp}(\Gaff)$;  up to tensor product with invertible modules all simple $\CU_{\mu}^{\Bp}(\Gaff)$-modules factorize through the intermediate truncation $\overline{\CU}_{\mu}^{\Bp}(\Gaff)$; up to tensor product with sign modules all simple
 $\overline{\CU}_{\mu}^{\Bp}(\Gaff)$-modules extend to the adjoint truncation $\CU_{\mu}^{\Bp,\mathbf{z}}(\Gaff)$. From Theorem \ref{thm: JH} and its proof we deduce the Jordan--H\"older property for intermediate truncations.
 \begin{cor}
     Let $(\mu, \Bp)$ be a truncatable pair. Up to isomorphism there are finitely many simple $\overline{\CU}_{\mu}^{\Bp}(\Gaff)$-modules in category $\mathcal{O}_{\mu}$. Each $\overline{\CU}_{\mu}^{\Bp}(\Gaff)$-module in category $\mathcal{O}_{\mu}$ is of finite length.
 \end{cor}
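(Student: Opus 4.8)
The plan is to reduce both assertions to the already-established control over \emph{adjoint} truncations, importing it to the intermediate truncation through sign twists. Fix once and for all a constant $\ell$-weight $\mathbf{z} \in \mathfrak{p}_0$ solving Eq.\eqref{truncation: new parameter}; such a $\mathbf{z}$ exists because the Cartan matrix is invertible over $\BC$. By the proof of Theorem \ref{thm: JH} (i.e.\ \cite[Theorem 11.15]{H}) the adjoint truncation $\CU_{\mu}^{\Bp,\mathbf{z}}(\Gaff)$ has, up to isomorphism, only finitely many simple modules in $\mathcal{O}_{\mu}$, and each of them is already simple as a $\CU_{\mu}(\Gaff)$-module. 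Since by the diagram of Remark \ref{rem: truncation}(ii) the $\CU_{\mu}(\Gaff)$-action on such a module factors through $\overline{\CU}_{\mu}^{\Bp}(\Gaff)$, every $\overline{\CU}_{\mu}^{\Bp}(\Gaff)$-submodule is in particular a $\CU_{\mu}(\Gaff)$-submodule; hence the restriction of a simple adjoint module along $\overline{\CU}_{\mu}^{\Bp}(\Gaff) \to \CU_{\mu}^{\Bp,\mathbf{z}}(\Gaff)$ is again simple.

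For the finiteness statement, I would start from a simple $\overline{\CU}_{\mu}^{\Bp}(\Gaff)$-module $S$ in $\mathcal{O}_{\mu}$. The discussion preceding the corollary supplies a sign $\ell$-weight $\varepsilon \in \mathfrak{p}_0$ such that $L_0(\varepsilon)\otimes S$ extends to $\CU_{\mu}^{\Bp,\mathbf{z}}(\Gaff)$. The one content-bearing check is that $L_0(\varepsilon)\otimes S$ still factors through $\overline{\CU}_{\mu}^{\Bp}(\Gaff)$: by Eq.\eqref{rel: tensor product one-dim} the sign twist leaves the series $\SA_i^{\pm}(z)$ unchanged, so relations \eqref{truncation: poly} are preserved, while in \eqref{truncation: simply connected} both sides are multiplied by $\varepsilon_i$ and in \eqref{truncation: intermediate} both sides are multiplied by $\varepsilon_i^2 = 1$, so these relations hold as well. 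Thus $L_0(\varepsilon)\otimes S$ is a simple $\overline{\CU}_{\mu}^{\Bp}(\Gaff)$-module whose adjoint extension $M$ is simple (its restriction to $\overline{\CU}_{\mu}^{\Bp}(\Gaff)$ is simple). The assignment $S \mapsto (\varepsilon, M)$ is injective, since $S \cong L_0(\varepsilon)^{-1}\otimes M$, and its target is finite: there are $2^{r}$ sign $\ell$-weights and finitely many simple $\CU_{\mu}^{\Bp,\mathbf{z}}(\Gaff)$-modules in $\mathcal{O}_{\mu}$. Hence there are finitely many simple $\overline{\CU}_{\mu}^{\Bp}(\Gaff)$-modules in $\mathcal{O}_{\mu}$.

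For the finite-length statement, I would note that $\mathcal{O}_{\mu}$ is closed under subquotients and that every subquotient of a $\overline{\CU}_{\mu}^{\Bp}(\Gaff)$-module again factors through $\overline{\CU}_{\mu}^{\Bp}(\Gaff)$; consequently every simple subquotient of a $\overline{\CU}_{\mu}^{\Bp}(\Gaff)$-module in $\mathcal{O}_{\mu}$ belongs to the finite list just produced, and each such simple is $\CU_{\mu}(\Gaff)$-simple. With these two inputs the finite-length claim follows from the standard category $\mathcal{O}$ argument of \cite[Chapter 9]{Kac}, exactly as in the proof of Theorem \ref{thm: JH}: concretely, one peels off the maximal weights among the finitely many top weights of the simple subquotients and uses additivity of weight-space dimensions along any filtration to bound the total length by $\sum_{\eta} \dim V_{\eta}$ over those top weights. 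The main obstacle is not analytic but structural, namely that the intermediate truncation has no Jordan--H\"older property built into its definition; everything hinges on transporting the adjoint statement across the sign twist, so the crux is verifying that twisting by a sign module is compatible with relations \eqref{truncation: poly}--\eqref{truncation: intermediate} and preserves simplicity in both directions, after which both assertions are formal.
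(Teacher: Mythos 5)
Your proposal is correct and takes essentially the same route as the paper: the paper obtains this corollary from the discussion immediately preceding it (every simple $\overline{\CU}_{\mu}^{\Bp}(\Gaff)$-module in $\mathcal{O}_{\mu}$ extends, up to sign twist, to the adjoint truncation) combined with Theorem \ref{thm: JH} and its proof, which is exactly your reduction. The details you make explicit --- compatibility of the sign twist with relations \eqref{truncation: poly}--\eqref{truncation: intermediate}, the injectivity of $S \mapsto (\varepsilon, M)$, the transfer of simplicity along the surjection $\CU_{\mu}(\Gaff) \to \overline{\CU}_{\mu}^{\Bp}(\Gaff)$, and the Kac-style weight-space bound on the length --- are precisely the steps the paper leaves implicit.
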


\begin{example}
    Let $\Glie = \mathfrak{sl}_2$. Fix $a, b \in \BC^{\times}$ and consider the simple $\CU_{-1}(\Gaff)$-module $L_{-\varpi_1^{\vee}}(\frac{b}{1-za})$. It is isomorphic to the tensor product $L_0(b) \otimes L_{-\varpi_1^{\vee}}(\frac{1}{1-za})$. It has a basis $(w_n)_{n\in \BN}$ with respect to which (see \cite[Example 11.19]{H})
    $$ \phi_1^{\pm}(z) w_n = b q^{-2n} \frac{1-zaq^2}{(1-zaq^{-2n})(1-zaq^{2-2n})} w_n.  $$
    Clearly, $L_{-\varpi_1^{\vee}}(\frac{b}{1-za})$ factorizes through the simply-connected truncation $\CU_{-\varpi_1^{\vee}}^{1-zaq^2}(\Gaff)$ with the action of A-series given by
    $$ \SA_1^+(z) w_n = (1-zaq^{-2n}) w_n,\quad \SA_1^-(z) w_n = (1 - z^{-1} a^{-1} q^{2n}) w_n. $$ 
    It factorizes through the intermediate truncation $\overline{\CU}_{-\varpi_1^{\vee}}^{1-zaq^2}(\Gaff)$ if and only if $b^2 = a^2$. Let $\mathbf{z}$ be a square root of $-aq^2$ so that the triple $(-\varpi_1^{\vee}, 1-zaq^2, \mathbf{z})$ satisfies Eq.\eqref{truncation: new parameter}. Then the module extends to the adjoint truncation $\CU_{-\varpi_1^{\vee}}^{1-zaq^2,\mathbf{z}}(\Gaff)$ if and only if $b = a$. 
\end{example} 
\section{Applications}\label{appli}

 We discuss several applications of our main result: subring structures for subcategories of finite length representations in the topological Grothendieck ring (Corollaries \ref{subring}, \ref{fls} and \ref{subring cluster}),  their relation to the cluster algebras of \cite{GHL} (Conjecture \ref{isomclus}) and descent to truncation in accordance with \cite[Conjecture 12.2]{H}(Corollary \ref{clastrunc}).

\subsection{Grothendieck rings}

Let $\mathcal{O}_\mu^{\rm f}$ be the category of modules in 
$\mathcal{O}_\mu$ of finite length. Its Grothendieck group $K_0(\mathcal{O}_{\mu}^{\rm f})$ is the free abelian group generated by the isomorphism classes $[L_{\mu}(\Bf)]$ for $\Bf \in \mathfrak{r}_{\mu}$. It admits a natural completion $K_0(\mathcal{O}_\mu)$ which we will view as the the Grothendieck group of the category $\mathcal{O}_\mu$ (this is the version used in \cite{H}).
Let $K_0(\mathcal{O}^{\rm sh,f})$ be the direct sum, over all coweights, of $K_0(\mathcal{O}_\mu^{\rm f})$. Then the completion 
$$K_0(\Osh) = \bigoplus_{\mu \in \BP^{\vee}} K_0(\mathcal{O}_{\mu})$$ 
of $K_0(\mathcal{O}^{\rm sh,f})$ is endowed with a commutative ring structure by fusion product \cite[\S 5.4]{H}:
$$ [L_{\mu_1}(\Bf_1)] [L_{\mu_2}(\Bf_2)] = [L_{\mu_1}(\Bf_1) \ast L_{\mu_2}(\Bf_2)] \quad \textrm{for $\Bf_1 \in \mathfrak{r}_{\mu_1}$ and $\Bf_2 \in \mathfrak{r}_{\mu_2}$}. $$
We shall refer to $K_0(\Osh)$ as topological Grothendieck ring.
As a consequence of our main Theorem \ref{thm: truncation shifted}, we obtain the following.
\begin{cor}\label{subring} The subgroup $K_0(\mathcal{O}^{\rm sh,f})$ of the topological Grothendieck ring $K_0(\Osh)$ is a subring.
\end{cor}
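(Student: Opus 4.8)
The plan is to reduce the ring-closure assertion entirely to the finite-length statement of Theorem \ref{thm: truncation shifted}(i). By construction $K_0(\mathcal{O}^{\rm sh,f})$ is the $\BZ$-span inside $K_0(\Osh)$ of the simple classes $[L_\mu(\Bf)]$ with $\Bf \in \mathfrak{r}_\mu$, and the multiplication induced by the fusion product is $\BZ$-bilinear. Hence to see that this additive subgroup is closed under multiplication it suffices to check that the product of any two generators lands back in $K_0(\mathcal{O}^{\rm sh,f})$. By the very definition of the product, $[L_{\mu_1}(\Bf_1)]\,[L_{\mu_2}(\Bf_2)] = [L_{\mu_1}(\Bf_1)\ast L_{\mu_2}(\Bf_2)]$ is the class of a fusion product of two simple modules, so the whole corollary comes down to showing that such a fusion product has finite length.

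The first step I would carry out is to write each rational $\ell$-weight as a ratio of polynomial ones, so as to bring the fusion product into the exact shape treated in Theorem \ref{thm: truncation shifted}. Given $\Bf \in \mathfrak{r}_\mu$, each component $\Bf_i(z)$ is a rational function with $\Bf_i(0)\in\BC^\times$ and of degree $\langle\mu,\alpha_i\rangle$ (the difference of the degrees of numerator and denominator). Writing $\Bf_i(z)=\Bm_i(z)/\Bn_i(z)$ in lowest terms, both $\Bm_i$ and $\Bn_i$ have nonzero constant term since $\Bf_i$ has neither zero nor pole at $z=0$; the tuples $\Bm=(\Bm_i)_{i\in I}$ and $\Bn=(\Bn_i)_{i\in I}$ are then polynomial $\ell$-weights of dominant coweights $\mu^+=\sum_i(\deg\Bm_i)\varpi_i^\vee$ and $\mu^-=\sum_i(\deg\Bn_i)\varpi_i^\vee$ with $\mu=\mu^+-\mu^-$, so that $L_\mu(\Bf)=L_{\mu^+-\mu^-}(\Bm/\Bn)$. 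Performing this factorization for both $\Bf_1$ and $\Bf_2$ exhibits $L_{\mu_1}(\Bf_1)\ast L_{\mu_2}(\Bf_2)$ as an instance of Theorem \ref{thm: truncation shifted}(i) with $s=2$, which guarantees it is of finite length in $\mathcal{O}_{\mu_1+\mu_2}$.

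The remaining steps are formal. A finite-length module has a class which is a finite $\BZ$-combination of simple classes, so $[L_{\mu_1}(\Bf_1)\ast L_{\mu_2}(\Bf_2)]\in K_0(\mathcal{O}^{\rm sh,f})$. Unwinding bilinearity for arbitrary finite-length $V_1,V_2$ with $[V_1]=\sum_j[L_{1j}]$ and $[V_2]=\sum_k[L_{2k}]$ (finite sums of simple classes), one gets $[V_1]\,[V_2]=\sum_{j,k}[L_{1j}\ast L_{2k}]$, a finite sum of finite-length classes, hence again a finite-length class in $K_0(\mathcal{O}^{\rm sh,f})$. Since moreover the unit of $K_0(\Osh)$ is the class of the one-dimensional trivial module $L_0(\mathbf 1)$ with $\mathbf 1\in\mathfrak{p}_0$, which already lies in the subgroup, this shows $K_0(\mathcal{O}^{\rm sh,f})$ is a subring.

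I expect the only point genuinely requiring care to be the factorization of rational $\ell$-weights as ratios of polynomial $\ell$-weights with the correct degrees and the dominant-coweight bookkeeping $\mu=\mu^+-\mu^-$; everything else is a purely formal consequence of bilinearity together with Theorem \ref{thm: truncation shifted}(i). In particular, the substantive content, namely the finiteness of the fusion product, has already been secured upstream, so this corollary is in effect a packaging statement built on the main theorem.
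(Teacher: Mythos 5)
Your proposal is correct and matches the paper's intended argument: the paper states this corollary as a direct consequence of Theorem \ref{thm: truncation shifted}, and your deduction—writing each rational $\ell$-weight $\Bf \in \mathfrak{r}_\mu$ as a ratio $\Bm/\Bn$ of polynomial $\ell$-weights so that the fusion product of two simples falls under Theorem \ref{thm: truncation shifted}(i) with $s=2$, then concluding by $\BZ$-bilinearity—is precisely the deduction the paper leaves implicit. The factorization bookkeeping ($\Bm_i(0),\Bn_i(0)\neq 0$ since $\Bf_i^+(z)\in\BC[[z]]^{\times}$, and $\mu=\mu^+-\mu^-$ from the degree condition) is handled correctly.
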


As above, the Grothendieck group $K_0(\widehat{\mathcal{O}}^{\rm f})$ of the category $\widehat{\mathcal{O}}^{\rm f}$ of finite length $\qaf$-modules in $\widehat{\mathcal{O}}$ can also be completed to a group $K_0(\widehat{\mathcal{O}})$. 
This completed group has a commutative ring structure induced by the Drinfeld--Jimbo coproduct, or equivalently by the fusion product. The resuting topological Grothendieck ring $K_0(\widehat{\mathcal{O}})$ can be viewed as a subring of $K_0(\Osh)$.

\begin{cor}\label{fls} The subgroup $K_0(\widehat{\mathcal{O}}^{\rm f})$ of the topological Grothendieck ring $K_0(\widehat{\mathcal{O}})$ is a subring.
\end{cor}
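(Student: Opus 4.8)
The plan is to mirror the proof of Corollary \ref{subring}, reducing the statement to the finite-length result already obtained in Theorem \ref{thm: truncation shifted}(ii). The group $K_0(\widehat{\mathcal{O}}^{\rm f})$ is the free abelian group on the simple classes $[L_0(\Bf)]$ with $\Bf \in \widehat{\mathfrak{r}}$, and since the product on $K_0(\widehat{\mathcal{O}})$ is $\BZ$-bilinear it suffices to prove that the product of two such generators again lies in the subgroup $K_0(\widehat{\mathcal{O}}^{\rm f})$.

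First I would record that every $\Bf \in \widehat{\mathfrak{r}}$ can be written as a ratio $\Bf = \Bm/\Bn$ of polynomial $\ell$-weights $\Bm, \Bn \in \mathfrak{p}$. Indeed each rational function $\Bf_i(z)$ has neither a zero nor a pole at $z = 0$ (because $\Bf_i^+(z) \in \BC[[z]]^{\times}$), so up to a nonzero constant it factors as a product of terms $(1-az)^{\pm 1}$ with $a \in \BC^{\times}$; collecting the numerator and denominator contributions yields polynomials $\Bm_i(z)$ and $\Bn_i(z)$, each with nonzero constant term. Hence every simple object of $\widehat{\mathcal{O}}$ is of the form $L_0(\Bm/\Bn)$ with $\Bm/\Bn \in \widehat{\mathfrak{r}}$, exactly as in the hypotheses of Theorem \ref{thm: truncation shifted}(ii).

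Next, given two simple objects $L_0(\Bm_1/\Bn_1)$ and $L_0(\Bm_2/\Bn_2)$, I would apply Theorem \ref{thm: truncation shifted}(ii) with $s = 2$: it asserts that the tensor product $L_0(\Bm_1/\Bn_1) \otimes L_0(\Bm_2/\Bn_2)$ is of finite length in $\widehat{\mathcal{O}}$. Since $\widehat{\mathcal{O}}$ is monoidal, this tensor product remains an object of $\widehat{\mathcal{O}}$, so all of its composition factors are simple objects of $\widehat{\mathcal{O}}$, i.e.\ labelled by $\widehat{\mathfrak{r}}$. Consequently the product
$[L_0(\Bm_1/\Bn_1)]\,[L_0(\Bm_2/\Bn_2)] = [L_0(\Bm_1/\Bn_1) \otimes L_0(\Bm_2/\Bn_2)]$
is a \emph{finite} $\BZ$-linear combination of simple classes, hence lies in $K_0(\widehat{\mathcal{O}}^{\rm f})$. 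Bilinearity then shows that $K_0(\widehat{\mathcal{O}}^{\rm f})$ is closed under multiplication, proving it is a subring.

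The entire substance of the argument sits in Theorem \ref{thm: truncation shifted}(ii): without its finite-length conclusion, the class of a tensor product would in general be an \emph{infinite} sum in the completed ring $K_0(\widehat{\mathcal{O}})$ and would escape the subgroup. The only genuinely new (but routine) verifications are the factorization of an arbitrary element of $\widehat{\mathfrak{r}}$ as a quotient of polynomial $\ell$-weights and the remark that monoidality keeps the product inside $\widehat{\mathcal{O}}$; I do not expect either to present a real obstacle.
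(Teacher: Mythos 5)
Your proposal is correct and follows exactly the route the paper intends: Corollary \ref{fls} is stated as an immediate consequence of Theorem \ref{thm: truncation shifted}(ii), and your argument simply spells out the routine reductions (writing any $\Bf \in \widehat{\mathfrak{r}}$ as a ratio $\Bm/\Bn$ of polynomial $\ell$-weights, bilinearity of the product, and monoidality of $\widehat{\mathcal{O}}$ so that the finitely many composition factors of the tensor product are again simple objects of $\widehat{\mathcal{O}}$). Nothing is missing.
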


\begin{rem} The picture is very different in the case of the Borel subalgebra of a quantum affine algebra.
Consider the Grothendieck group $K_0(\mathcal{O}^{\rm f})$ and the topological Grothendieck ring $K_0(\mathcal{O})$ as above, but for representations of the Borel algebra. It is known from \cite[Remark 5.12]{HL} that the subgroup $K_0(\mathcal{O}^{\rm f})$ is not a subring. For example, in the $\mathfrak{sl}_2$-case, the tensor product of a positive prefundamental representation (associated with a prefundamental $\ell$-weight) by a negative prefundamental representations (associated with the inverse of a prefundamental $\ell$-weight) has infinite length. 
Note however that certain important subcategories of finite-length representations of the Borel algebra are stable by tensor product; see \cite[Proposition 5.11]{HL}.
\end{rem}

\subsection{Cluster algebras} The relation between representations of shifted quantum affine algebras and cluster algebras was investigated in \cite{GHL}.

Fix a square root $\sqrt{q}$ of $q$ and define $\varpi_i'$ to be the constant $\ell$-weight $(\sqrt{q}^{\delta_{ij}d_i} )_{j\in I}$. Then $(\varpi_i')^2$ is precisely $\varpi_i$ defined in Subsection \ref{subsection: O qaf}. 
In \cite{GHL} the following remarkable subcategory $\mathcal{O}_{\mathbb{Z}}^{\rm sh}$ of $\Osh$ is introduced. It is the abelian subcategory of modules in $\Osh$ whose simple constituents have as highest $\ell$-weights Laurent monomials in variables $\Psi_{i,q^r}$ and $\varpi_j'$, for $i, j \in I$ and $r\in\mathbb{Z}$, with an additional condition on the couple $(i,r)$; see \cite[Definition 9.13]{GHL}. The category $\mathcal{O}^{\rm sh}_{\mathbb{Z}}$ is stable by fusion product so that its topological Grothendieck ring $K_0(\mathcal{O}_{\BZ}^{\rm sh})$ is well-defined as a subring of $K_0(\Osh)$. 

Let $\mathcal{O}_{\BZ}^{\rm sh,f}$ be the full subcategory of $\Osh_{\BZ}$ consisting of finite length representations. Consider its Grothendieck group $K_0(\mathcal{O}_{\BZ}^{\rm sh,f})$ as a subgroup of $K_0(\mathcal{O}_{\BZ}^{\rm sh})$.
The following result is a direct consequence of Corollary \ref{subring}.

\begin{cor}\label{subring cluster} The subgroup $K_0(\mathcal{O}^{\rm sh,f}_{\mathbb{Z}})$ of the topological Grothendieck ring $K_0(\Osh_{\mathbb{Z}})$ is a subring.
\end{cor}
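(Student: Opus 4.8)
The plan is to identify $K_0(\mathcal{O}^{\rm sh,f}_{\mathbb{Z}})$ with the intersection of two subrings of $K_0(\Osh)$ that are already available, namely $K_0(\mathcal{O}^{\rm sh,f})$ from Corollary \ref{subring} and the topological subring $K_0(\Osh_{\BZ})$. Since a subgroup that coincides with an intersection of subrings is itself a subring, this reduces the statement to a purely formal bookkeeping of generators.

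First I would record that, as a subgroup of $K_0(\Osh)$, the group $K_0(\mathcal{O}^{\rm sh,f}_{\mathbb{Z}})$ is free on the classes $[L_{\mu}(\Bf)]$ of the simple objects of $\mathcal{O}_{\BZ}^{\rm sh}$, i.e. those $\Bf$ that are the admissible Laurent monomials in the variables $\Psi_{i,q^r}$ and $\varpi_j'$ of \cite[Definition 9.13]{GHL}; indeed every simple module has length $1$, so these simple classes all lie in $\mathcal{O}^{\rm sh,f}_{\mathbb{Z}}$ and generate its Grothendieck group.

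Next I would establish the equality
\[
K_0(\mathcal{O}^{\rm sh,f}_{\mathbb{Z}}) = K_0(\mathcal{O}^{\rm sh,f}) \cap K_0(\Osh_{\BZ})
\]
inside $K_0(\Osh)$. The inclusion from left to right is immediate, since an admissible simple class is simultaneously a finite integral combination of simple classes and an element of the topological ring $K_0(\Osh_{\BZ})$. For the reverse inclusion, an element of $K_0(\mathcal{O}^{\rm sh,f})$ is by definition a finite integral combination of simple classes; lying in addition in $K_0(\Osh_{\BZ})$ forces, by the linear independence of the simple classes, every simple class occurring in it to be admissible, hence to be a generator of $K_0(\mathcal{O}^{\rm sh,f}_{\mathbb{Z}})$. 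Combined with the fact that both $K_0(\mathcal{O}^{\rm sh,f})$ and $K_0(\Osh_{\BZ})$ are subrings of $K_0(\Osh)$ — the first by Corollary \ref{subring}, the second by the cited stability of $\mathcal{O}_{\BZ}^{\rm sh}$ under fusion product — this yields the corollary.

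I do not expect a genuine obstacle: the substantive input is the finiteness of length of fusion products, already secured via Theorem \ref{thm: truncation shifted}. Equivalently, and perhaps more transparently, one may argue directly on products of generators: for admissible simples $S_1, S_2$ the fusion product $S_1 \ast S_2$ lies in $\mathcal{O}_{\BZ}^{\rm sh}$ by stability of that category under fusion and is of finite length by Theorem \ref{thm: truncation shifted}, so, $\mathcal{O}_{\BZ}^{\rm sh}$ being abelian and hence closed under subquotients, all its composition factors are again admissible simples and $[S_1][S_2]$ is a finite integral combination of generators of $K_0(\mathcal{O}^{\rm sh,f}_{\mathbb{Z}})$. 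The only point deserving any care is thus that admissibility is inherited by composition factors, which holds because $\mathcal{O}_{\BZ}^{\rm sh}$ is a Serre subcategory of $\Osh$.
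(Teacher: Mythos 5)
Your proof is correct and follows essentially the same route as the paper, which simply derives the corollary directly from Corollary \ref{subring} together with the already-noted stability of $\mathcal{O}_{\BZ}^{\rm sh}$ under fusion product; your intersection formulation $K_0(\mathcal{O}^{\rm sh,f}_{\mathbb{Z}}) = K_0(\mathcal{O}^{\rm sh,f}) \cap K_0(\Osh_{\BZ})$ and your direct argument on products of admissible simple classes are just two explicit packagings of that one-line deduction.
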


It is proved in \cite[Theorem 9.15]{GHL} that there exists an injective ring morphism
$$F : \mathcal{A} \rightarrow K_0(\mathcal{O}_\mathbb{Z}^{\rm sh}).$$
Here $\mathcal{A}$ is a cluster algebra of infinite rank introduced in \cite{GHL}; to be precise our $\mathcal{A}$ and $K_0(\Osh_{\BZ})$ correspond to $[\underline{P}] \otimes_{[P]} \SA_{w_0}$ and $[\underline{P}] \otimes_{[P]} K_0(\mathscr{O}_{\BZ})$ in {\it loc.cit.} Moreover, it is prove that the topological completion of the image of $F$ is the whole topological Grothendieck ring $K_0(\mathcal{O}_\mathbb{Z}^{\rm sh})$. 

Recall that a cluster algebra is generated as ring by cluster variables. It is conjectured in \cite{GHL} that all cluster variables of $\mathcal{A}$ are mapped by $F$ to certain isomorphism classes of simple objects in $\Osh_{\mathbb{Z}}$. If this is true, using Corollary \ref{subring cluster}, we get that the image of $F$ is included in the subring $K_0(\mathcal{O}^{\rm sh, f}_{\BZ})$.


We are now in the situation to formulate the following conjecture, which gives a finer categorical interpretation of the cluster algebra $\mathcal{A}$.

\begin{conj}\label{isomclus} The ring morphism $F$ induces a ring isomorphism
$$\mathcal{A} \simeq K_0(\mathcal{O}^{\rm sh, f}_{\mathbb{Z}}).$$
\end{conj}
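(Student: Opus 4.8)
To prove Conjecture \ref{isomclus} one would show that the injective ring morphism $F$ has image exactly $K_0(\mathcal{O}^{\rm sh,f}_{\mathbb{Z}})$. Since $F$ is already injective and $K_0(\mathcal{O}^{\rm sh,f}_{\mathbb{Z}})$ is a genuine (non-topological) subring of $K_0(\Osh_{\mathbb{Z}})$ by Corollary \ref{subring cluster}, it suffices to establish the two inclusions $\mathrm{Im}(F) \subseteq K_0(\mathcal{O}^{\rm sh,f}_{\mathbb{Z}})$ and $K_0(\mathcal{O}^{\rm sh,f}_{\mathbb{Z}}) \subseteq \mathrm{Im}(F)$, and then to observe that a bijective ring morphism onto a subring is an isomorphism.

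For the first inclusion the natural input is the conjecture of \cite{GHL} that every cluster variable of $\mathcal{A}$ is sent by $F$ to the class of a simple object of $\Osh_{\mathbb{Z}}$. Each such simple class lies in $K_0(\mathcal{O}^{\rm sh,f}_{\mathbb{Z}})$ by definition, and as $\mathcal{A}$ is generated as a ring by its cluster variables while $K_0(\mathcal{O}^{\rm sh,f}_{\mathbb{Z}})$ is closed under products, the whole image of $F$ is then forced inside the finite length subring. Here Theorem \ref{thm: truncation shifted} is exactly what makes the argument work: it guarantees that arbitrary fusion products of simple classes remain of finite length, so that products of cluster variables stay in $K_0(\mathcal{O}^{\rm sh,f}_{\mathbb{Z}})$ with no recourse to the topological completion of the target.

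For the reverse inclusion I would prove that every simple class $[L]$ with $L \in \mathcal{O}^{\rm sh,f}_{\mathbb{Z}}$ belongs to $\mathrm{Im}(F)$; since $K_0(\mathcal{O}^{\rm sh,f}_{\mathbb{Z}})$ is free on the simple classes, this is equivalent to surjectivity. The route is a monoidal categorification statement in the spirit of \cite{HL}: identify the simple modules of $\Osh_{\mathbb{Z}}$ whose highest $\ell$-weights are the dominant monomials in the generators $\Psi_{i,q^r}$ and $\varpi_j'$ with the cluster monomials of $\mathcal{A}$. Concretely one first matches the simple objects attached to an initial seed, then checks that the defining exchange relations of $\mathcal{A}$ are categorified by short exact sequences of fusion products, and propagates the correspondence along mutation sequences; the multiplicativity of highest $\ell$-weights under fusion product, together with the factorization of an arbitrary monomial $\ell$-weight, reduces a general simple class to these building blocks. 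Note that the density statement recalled from \cite{GHL} (topological completion of $\mathrm{Im}(F)$ equals the whole $K_0(\Osh_{\mathbb{Z}})$) does \emph{not} by itself suffice, since it only controls the completed ring and not the honest finite length subring.

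The hard part will be this reverse inclusion, which is of a genuinely different nature from the finiteness results proved here. Theorem \ref{thm: truncation shifted} provides only the ring-theoretic input, namely that finite length classes form a subring; it gives no information about which precise classes are cluster monomials, and establishing that the simples exhaust $\mathrm{Im}(F)$ demands a complete monoidal categorification, including positivity and the compatibility of cluster mutation with the behaviour of simple modules under fusion product. Moreover, even the first inclusion presupposes the still open \cite{GHL} correspondence between cluster variables and simple classes. A full proof would therefore have to settle both the \cite{GHL} conjecture and the categorification of mutations, which is precisely why the statement is recorded here as a conjecture rather than a theorem.
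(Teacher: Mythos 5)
This statement is a conjecture: the paper gives no proof of it, only the surrounding discussion (and the remark that it is known for $\mathfrak{g}=\mathfrak{sl}_2$ by \cite{GHL}). Your assessment is accurate and matches that discussion exactly — your argument that $\mathrm{Im}(F) \subseteq K_0(\mathcal{O}^{\rm sh,f}_{\mathbb{Z}})$, granting the \cite{GHL} conjecture on cluster variables and using Corollary \ref{subring cluster}, is precisely the observation the paper makes immediately before stating Conjecture \ref{isomclus}, and you correctly identify the reverse inclusion (that the simple finite-length classes exhaust $\mathrm{Im}(F)$, i.e.\ a full monoidal categorification statement) as the genuinely open part, which is exactly why the statement is recorded as a conjecture rather than a theorem.
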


This is established for $\mathfrak{g} = \mathfrak{sl}_2$ in \cite[Remark 9.32]{GHL}. This is an isomorphism of rings (not of topological rings).

\subsection{Classification of simple modules by Langlands dual $q$-character}

In \cite{H} the first author proposed a general conjecture to classify simple representations of truncated shifted quantum affine algebras in caegory $\Osh$ in terms of Langlands dual $q$-characters. To each polynomial $\ell$-weight ${\bf a}$ is associated a representation $V^L$ of the Langlands dual quantum affine algebra $U_q(\hat{\mathfrak{g}}^L)$ which has a \lq\lq Langlands dual $q$-characters" $\chi_q^L(V^L)$ defined in \cite{H} (and that can be computed algorithmically). Then, the monomials of $\chi_q^L(V^L)$ are conjectured to give the highest $\ell$-weights of the simple representations up to sign twist of the corresponding adjoint truncation. 

Since the adjoint truncation and the intermediate truncation have the same classification (up to sign twist) of simple modules in category $\Osh$, the conjecture of \cite{H} can be equivalently stated for the intermediate truncation.  Indeed, it suffices to replace in \cite[Conjecture 12.2]{H} the category $\mathcal{O}_{\mu,\mathcal{Z}}^{\lambda}$ by the subcategory $\overline{\mathcal{O}}_{\mu}^{\mathbf{Z}}$ of modules in $\mathcal{O}_{\mu}$ that factor through the intermediate truncation $\overline{\CU}_{\mu}^{\mathbf{Z}}(\Gaff)$. Then we can omit the parameters $z_i'$ in the collection $\mathcal{Z} = (\mathbf{Z}, z_1',z_2',\cdots,z_n')$.

The statement of Theorem \ref{thm: truncation shifted} with one factor $s = 1$ gives the following.

\begin{cor}\label{clastrunc} Any simple module in category $\mathcal{O}_\mu$  descends to a simply-connected truncation, with the truncation parameters as 
predicted by \cite[Conjecture 12.2]{H}.\end{cor}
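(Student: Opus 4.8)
The plan is to deduce the statement from the one-factor case ($s=1$) of Theorem \ref{thm: truncation shifted}. By Theorem \ref{thm: category O}, every simple object of $\mathcal{O}_\mu$ is isomorphic to $L_\mu(\Bf)$ for a unique rational $\ell$-weight $\Bf\in\mathfrak{r}_\mu$, so it suffices to realize $L_\mu(\Bf)$ as a single simple factor of the kind treated there and to read off the resulting truncation parameter. The deduction of the descent itself will be immediate; the substance of the corollary lies in matching the parameter produced by Theorem \ref{thm: truncation shifted} with the one predicted by \cite[Conjecture 12.2]{H}.

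First I would realize $\Bf$ as a quotient of two polynomial $\ell$-weights. Writing each component $\Bf_i(z)$ as a ratio $\Bm_i(z)/\Bn_i(z)$ of coprime polynomials and using that $\Bf_i(z)$ is regular and nonzero at $z=0$ (since $\Bf_i^+(z)\in\BC[[z]]^\times$), one sees that both $\Bm_i$ and $\Bn_i$ have nonzero constant term. Hence $\Bm:=(\Bm_i)_{i\in I}$ and $\Bn:=(\Bn_i)_{i\in I}$ are polynomial $\ell$-weights of the respective dominant coweights $\mu_1=\sum_{i\in I}(\deg\Bm_i)\varpi_i^\vee$ and $\nu_1=\sum_{i\in I}(\deg\Bn_i)\varpi_i^\vee$; comparing orders at infinity gives $\langle\mu,\alpha_i\rangle=\deg\Bm_i-\deg\Bn_i$, so that $\mu=\mu_1-\nu_1$ and $\Bf=\Bm/\Bn$. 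Applying Theorem \ref{thm: truncation shifted}(i) with $s=1$, $\Bm_1=\Bm$ and $\Bn_1=\Bn$ (whose proof passes through Proposition \ref{prop: key}(i)) then yields at once that the pair $(\mu,\Bp)$ with $\Bp=\Bm\,\widetilde{\Bn}$ is truncatable and that $L_\mu(\Bf)$ factorizes through the simply-connected truncation $\CU_\mu^{\Bp}(\Gaff)$.

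It remains to identify $\Bp=\Bm\,\widetilde{\Bn}$ with the parameter predicted by \cite[Conjecture 12.2]{H}. Here $\Bf$ is the top monomial of the Langlands dual $q$-character $\chi_q^L(V^L)$ attached to the conjectural truncation datum, and the recipe of \cite{H} recovers that datum from the highest $\ell$-weight; the key point is that the twist $\widetilde{\cdot}$, sending $\Psi_{i,a}$ to $\Psi_{\overline i,\,aq^{r^\vee h^\vee}}$, is precisely the combinatorial shadow of the diagram involution $i\mapsto\overline i$ and the shift $q^{r^\vee h^\vee}$ that already govern the bottom $\ell$-weight of the fundamental representations recorded before \eqref{equ: fundamental rep}. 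I expect the main obstacle to be entirely one of conventions: one must unwind the definition of the truncation parameter in \cite{H} and check term by term that it equals $\Bm\,\widetilde{\Bn}$, while carefully tracking the normalization between the A-series $A_i^\pm(z)$ used in \cite{H} and the series $\SA_i^\pm(z)$ of the present paper recorded in Remark \ref{rem: truncation}(i). Once this dictionary is fixed the identification is immediate, and the corollary follows.
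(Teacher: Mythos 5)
Your first half is correct and is exactly the paper's route to the descent statement: write $\Bf=\Bm/\Bn$ as a ratio of polynomial $\ell$-weights and apply Theorem \ref{thm: truncation shifted} with $s=1$, noting (as you do) that the factorization through the simply-connected truncation $\CU_\mu^{\Bp}(\Gaff)$ with $\Bp=\Bm\,\widetilde{\Bn}$ comes from Proposition \ref{prop: key}(i) inside that proof.

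The gap is in your second half, and it is a genuine one: the identification of $\Bp=\Bm\,\widetilde{\Bn}$ with the parameter predicted by \cite[Conjecture 12.2]{H} is \emph{not} ``entirely one of conventions,'' and no amount of unwinding the normalization between the series $A_i^\pm(z)$ of \cite{H} and $\SA_i^\pm(z)$ will produce it. The conjecture of \cite{H} predicts truncation parameters through the monomials of the Langlands dual $q$-character $\chi_q^L(V^L)$ of a representation $V^L$ of the \emph{Langlands dual} quantum affine algebra $U_q(\hat{\mathfrak{g}}^L)$, computed algorithmically on a different algebra; to verify the prediction one must show that the highest $\ell$-weight $\Bf$ corresponds to a monomial actually occurring in the relevant $\chi_q^L(V^L)$. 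The paper handles this by a representation-theoretic reduction rather than a dictionary: every simple module in $\mathcal{O}_\mu$ is, up to invertible twist, a subquotient of a fusion product of prefundamental modules \cite[Corollary 5.6]{H}, and truncations are compatible with fusion products \cite[Proposition 12.5]{H}, so it suffices to check the prediction for prefundamental representations. For positive prefundamentals this is \cite[Example 11.3]{H}; for the negative prefundamental $L_{-\varpi_i^\vee}(\Psi_{i,a}^{-1})$, where Theorem \ref{thm: truncation shifted} gives the parameter $\Psi_{\overline{i},aq^{r^\vee h^\vee}}$, the match rests on the nontrivial fact that $Z_{i,a^{-1}}^{-1}$ is the lowest weight monomial of the Langlands dual $q$-character $\chi_q^L\bigl(V_{\overline{i}}(a^{-1}q^{-r^\vee h^\vee})\bigr)$ of a fundamental representation of $U_q(\hat{\mathfrak{g}}^L)$, cited from \cite[Remark 12.3 (v)]{H0} (and, in the construction of \cite{FHR}, conditional on a positivity conjecture there). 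Your proposal contains neither the reduction to prefundamentals --- which is what makes the comparison tractable, since the prediction is multiplicative under fusion --- nor this input about fundamental representations on the Langlands dual side; asserting that ``once this dictionary is fixed the identification is immediate'' skips precisely the mathematical content of the corollary.
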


This generalizes the result obtained in \cite{H} for simple finite-dimensional representations in $\mathcal{O}_\mu$ by a different method.

\begin{example}\label{exneg} Recall the prefundamental $\ell$-weight in Example \ref{defi: prefund}.
Consider a negative prefundamental representation 
$L_{-\varpi_i^\vee}(\Psi_{i,a}^{-1})$ in $\mathcal{O}_{-\varpi_i^\vee}$. 
According to Theorem \ref{thm: truncation shifted}, up to tensor product with an invertible module this representation descends to the intermediate truncation $\overline{\mathcal{U}}_{-\varpi_i^\vee}^{\Bp}(\hat{\mathfrak{g}})$ where 
${\bf a} = \Psi_{\overline{i},aq^{r^\vee h^{\vee}}}$. In the notations of \cite{H}, this means this representation 
belongs to the category $\mathcal{O}_{-\varpi_i^\vee}^{\varpi_{\overline{i}}^\vee,\Bp}$. But $Z_{i,a^{-1}}^{-1}$ is the lowest weight term in the Langlands dual $q$-character 
$\chi_q^L(V_{\overline{i}}(a^{-1}q^{-r^\vee h^{\vee}}))$ 
of the fundamental representation $V_{\overline{i}}(a^{-1}q^{-r^\vee h^{\vee}})$ of the twisted quantum affine 
algebra $U_q(\hat{\mathfrak{g}}^L)$; see \cite[Remark 12.3 (v)]{H0}\footnote{In the construction of \cite{FHR}, this is true up to the positivity Conjecture 6.11 therein.}. The two statements coincide as predicted by \cite[Conjecture 12.2]{H}.
\end{example}

\begin{proof} In the proof of 
\cite[Theorem 12.8]{H} for finite-dimensional representations, a reduction to (finite-dimensional) fundamental representations is used. Similarly, it suffices to prove the result for prefundamental representations as any simple module in a category $\mathcal{O}_\mu$ is a subquotient of a fusion product of prefundamental modules \cite[Corollary 5.6]{H} (up to twist by invertible modules). Then the truncations are compatible with fusion products \cite[Proposition 12.5]{H}. The result is clear for positive prefundamental representations as from \cite[Example 11.3]{H}, as $L_{\varpi_i^\vee}(\Psi_{i,a})$ belongs to $\mathcal{O}_{\varpi_i^\vee}^{\varpi_i^\vee,\Psi_{i,a}}$. The result for negative prefundamental representations follows from our main Theorem as explained in Remark \ref{exneg}.
\end{proof}

\begin{example} Les $\mu = \varpi_i^\vee - \varpi_j^\vee$ and $\Psi = \Psi_{i,a}\Psi_{j,b}^{-1}$ for some $i,j\in I$ and $a,b\in\mathbb{C}^\times$. Note that, in general, $L_\mu(\Psi)$ is not isomorphic to the fusion product of $L_{i,a}^+$ by $L_{j,b}^-$, but is isomorphic to a quotient of the fusion product. Now, $L_\mu(\Psi)$ is in the category $\mathcal{O}_{\mu}^{\varpi_i^\vee + \varpi_{\overline{j}}^\vee,\Bp}$ where $\Bp = \Psi_{i,a}\Psi_{\overline{j},bq^{r^\vee h^{\vee}}}$. This corresponds to the fact that $Z_{i,a^{-1}}Z_{j,b^{-1}}^{-1}$ is a monomial occurring in the Langlands dual $q$-character of the module 
$V_i(a^{-1})\otimes V_{\overline{j}}(b^{-1}q^{-r^\vee h^{\vee}})$ of the Langlands dual quantum affine algebra $U_q(\hat{\mathfrak{g}}^L)$.
\end{example}

\end{document}